\documentclass{amsart}
\usepackage{amsfonts}

\newtheorem{theorem}{Theorem}
\theoremstyle{plain}

\newtheorem{definition}{Definition}
\newtheorem{example}{Example}

\newtheorem{lemma}{Lemma}

\newtheorem{proposition}{Proposition}
\newtheorem{remark}{Remark}

\numberwithin{equation}{section}
\input{tcilatex}

\begin{document}
\title[]{On the non-stationary Navier-Stokes type flows and reiterated
homogenization}
\author{Lazarus Signing}
\address{University of Ngaoundere, Department of Mathematics and Computer
Science, P.O.Box 454 Ngaoundere (Cameroon)}
\email{lsigning@yahoo.fr}
\subjclass[2020]{ 35B27, 35Q30, 76D05}
\keywords{reiterated homogenization, non-stationary Navier-Stokes equations,
two-scale convergence}
\dedicatory{}
\thanks{}

\begin{abstract}
We study the deterministic reiterated homogenization of the non-stationary
Navier-Stokes type equations in fixed domains with periodically rapidly
varying coefficients. One convergence theorem and a corrector result are
proved, and we derive the macroscopic homogenized model.
\end{abstract}

\maketitle

\section{Introduction}

We denote by $\mathbb{R}_{y}^{N}$ (resp. $\mathbb{R}_{z}^{N}$) the $N$%
-dimensional numerical space $\mathbb{R}^{N}$ of variable $y=\left(
y_{1},...,y_{N}\right) $ (resp. $z=\left( z_{1},...,z_{N}\right) $). Let $%
\left( a_{ij}\right) _{i,j=1,...,N}$ be the family of functions of $\mathbb{R%
}_{y}^{N}\times \mathbb{R}_{z}^{N}$ into $\mathbb{R}$ such that:%
\begin{equation}
a_{ij}\in \mathcal{B}\left( \mathbb{R}_{y}^{N}\times \mathbb{R}%
_{z}^{N}\right) \text{\qquad }\left( 1\leq i,j\leq N\right) \text{,}
\label{eq1.1a}
\end{equation}

\begin{equation}
a_{ij}=a_{ji}\text{\qquad }\left( 1\leq i,j\leq N\right)  \label{eq1.1b}
\end{equation}%
and there exists a constant $\alpha >0$ verifying%
\begin{equation}
\sum_{i,j=1}^{N}a_{ij}\left( y,z\right) \xi _{j}\xi _{i}\geq \alpha
\left\vert \xi \right\vert ^{2}\text{\qquad }\left( \xi =\left( \xi
_{j}\right) \in \mathbb{R}^{N}\text{, }\left( y,z\right) \in \mathbb{R}%
^{N}\times \mathbb{R}^{N}\right)  \label{eq1.1c}
\end{equation}%
($\mathcal{B}\left( \mathbb{R}_{y}^{N}\times \mathbb{R}_{z}^{N}\right) $
being the vector space of continuous and bounded functions of $\mathbb{R}%
_{y}^{N}\times \mathbb{R}_{z}^{N}$ into $\mathbb{R}$).

Let us set $Q=\Omega \times ]0,T[$, where $\Omega $ is an open bounded set
in $\mathbb{R}^{N}$ and $T$ $>0$ a real number. For any real number $%
\varepsilon >0$, let $a_{ij}^{\varepsilon }$ $\left( 1\leq i,j\leq N\right) $
be the function of $\overline{\Omega }$ into $\mathbb{R}$ defined by 
\begin{equation*}
a_{ij}^{\varepsilon }\left( x\right) =a_{ij}\left( \frac{x}{\varepsilon },%
\frac{x}{\varepsilon ^{2}}\right) \text{\qquad }\left( x\in \overline{\Omega 
}\right) \text{.}
\end{equation*}%
The family $\left( a_{ij}^{\varepsilon }\right) _{1\leq i,j\leq N}$
verifies: $a_{ij}^{\varepsilon }\in C\left( \overline{\Omega }\right) $\quad 
$\left( 1\leq i,j\leq N\right) $ and 
\begin{equation}
\sum_{i,j=1}^{N}a_{ij}^{\varepsilon }\left( x\right) \xi _{j}\xi _{i}\geq
\alpha \left\vert \xi \right\vert ^{2}\text{\qquad }\left( \xi \in \mathbb{R}%
^{N}\text{, }x\in \overline{\Omega }\right) \text{.}  \label{eq1.1d}
\end{equation}%
To the coefficients $a_{ij}^{\varepsilon }$\quad $\left( 1\leq i,j\leq
N\right) $, we associate the differential operator $P^{\varepsilon }$
defined by 
\begin{equation*}
P^{\varepsilon }=-\sum_{i,j=1}^{N}\frac{\partial }{\partial x_{i}}\left(
a_{ij}^{\varepsilon }\frac{\partial }{\partial x_{j}}\right) \text{.}
\end{equation*}%
We may notice that, the operator $P^{\varepsilon }$ acts on vector functions 
$\mathbf{u}=\left( u^{i}\right) \in H^{1}\left( \Omega \right) ^{N}$ in a 
\textit{diagonal way}, i.e., 
\begin{equation*}
\left( P^{\varepsilon }\mathbf{u}\right) ^{i}=P^{\varepsilon }u^{i}\text{ }%
\left( i=1,...,N\right) \text{.}
\end{equation*}

For any $\mathbf{f=}\left( f^{j}\right) \in L^{2}\left( 0,T;H^{-1}\left(
\Omega \right) ^{N}\right) $ and any $\varepsilon >0$, we consider the
following Cauchy-Dirichlet boundary value problem:%
\begin{equation}
\frac{\partial \mathbf{u}_{\varepsilon }}{\partial t}+P^{\varepsilon }%
\mathbf{u}_{\varepsilon }+\sum_{j=1}^{N}u_{\varepsilon }^{j}\frac{\partial 
\mathbf{u}_{\varepsilon }}{\partial x_{j}}+\mathbf{grad}p_{\varepsilon }=%
\mathbf{f}\text{ in }Q\qquad \qquad \qquad  \label{eq1.2a}
\end{equation}%
\begin{equation}
\qquad \qquad \qquad \qquad div\mathbf{u}_{\varepsilon }=0\text{ in }Q
\label{eq1.2b}
\end{equation}%
\begin{equation}
\qquad \qquad \qquad \qquad \qquad \qquad \qquad \mathbf{u}_{\varepsilon }=0%
\text{ on }\partial \Omega \times ]0,T[  \label{eq1.2c}
\end{equation}%
\begin{equation}
\qquad \qquad \qquad \qquad \mathbf{u}_{\varepsilon }\left( 0\right) =0\text{
in }\Omega \text{.}  \label{eq1.2d}
\end{equation}%
Let us recall the following spaces: 
\begin{equation*}
\mathcal{V}=\left\{ \mathbf{u}\in \mathcal{D}\left( \Omega ;\mathbb{R}%
\right) ^{N}:\text{ }div\mathbf{u}=0\right\} \text{,}
\end{equation*}%
\begin{equation*}
V=\text{the closure of }\mathcal{V}\text{ in }H_{0}^{1}\left( \Omega ;%
\mathbb{R}\right) ^{N}\text{,}
\end{equation*}%
\begin{equation*}
H=\text{the closure of }\mathcal{V}\text{ in }L^{2}\left( \Omega ;\mathbb{R}%
\right) ^{N}\text{.}
\end{equation*}%
We set 
\begin{equation*}
\mathcal{W}\left( 0,T\right) =\left\{ \mathbf{u}\in L^{2}\left( 0,T;V\right)
:\mathbf{u}^{\prime }\in L^{2}\left( 0,T;V^{\prime }\right) \right\} \text{,}
\end{equation*}%
where $V^{\prime }$ is the topological dual of $V$. We denote by $\left\Vert 
{\small \cdot }\right\Vert $ the norm in $V$ and $\left\vert {\small \cdot }%
\right\vert $ the norm in $H$. One has the following inclusions when
identifying $H$ to its topological dual:%
\begin{equation*}
V\subset H\subset V^{\prime }\text{,}
\end{equation*}%
each space being dense in the next. With the norm 
\begin{equation*}
\left\Vert \mathbf{u}\right\Vert _{\mathcal{W}\left( 0,T\right) }=\left(
\left\Vert \mathbf{u}\right\Vert _{L^{2}\left( 0,T;V\right) }^{2}+\left\Vert 
\mathbf{u}^{\prime }\right\Vert _{L^{2}\left( 0,T;V^{\prime }\right)
}^{2}\right) ^{\frac{1}{2}}\text{\qquad }\left( \mathbf{u}\in \mathcal{W}%
\left( 0,T\right) \right) \text{,}
\end{equation*}%
$\mathcal{W}\left( 0,T\right) $ is a Hilbert space with the following
properties \cite{bib14}: $\mathcal{W}\left( 0,T\right) $ is continuously
embedded in $\mathcal{C}\left( 0,T;H\right) $, and compactly embedded in $%
L^{2}\left( 0,T;H\right) $. The problem (\ref{eq1.2a})-(\ref{eq1.2d}) admits
a unique solution $\left( \mathbf{u}_{\varepsilon },p_{\varepsilon }\right) $
in $\mathcal{W}\left( 0,T\right) \times L^{2}\left( 0,T;L^{2}\left( \Omega ;%
\mathbb{R}\right) \mathfrak{/}\mathbb{R}\right) $ in dimension $N=2$, where $%
L^{2}\left( \Omega ;\mathbb{R}\right) \mathfrak{/}\mathbb{R=}\left\{ v\in
L^{2}\left( \Omega ;\mathbb{R}\right) :\int_{\Omega }v\left( x\right)
dx=0\right\} $ (We refer the reader for example to \cite{bib14}). We suppose
in the rest of the paper that $N=2$.

We study in this work the behaviour of the couple $\left( \mathbf{u}%
_{\varepsilon },p_{\varepsilon }\right) $ solution to (\ref{eq1.2a})-(\ref%
{eq1.2d}) when $\varepsilon $ tends to $0$, under the periodicity hypothesis
on the viscosity coefficients $\left( a_{ij}\right) $. Let us recall that a
function $u\in L_{loc}^{1}\left( \mathbb{R}_{y}^{N}\times \mathbb{R}%
_{z}^{N}\right) $ is said to be $Y\times Z$-periodic if for each $\left(
k,l\right) \in \mathbb{Z}^{N}\times \mathbb{Z}^{N}$ ($\mathbb{Z}$ denotes
the integers), we have $u\left( y+k,z+l\right) =u\left( y,z\right) $ almost
everywhere (a.e.) in $\left( y,z\right) \in \mathbb{R}^{N}\times \mathbb{R}%
^{N}$, where we set $Y=\left( -\frac{1}{2},\frac{1}{2}\right) ^{N}$ and $%
Z=\left( -\frac{1}{2},\frac{1}{2}\right) ^{N}$, $Y$ and $Z$ being considered
as subsets of $\mathbb{R}_{y}^{N}$ (the space $\mathbb{R}^{N}$ of variables $%
y=\left( y_{1},...,y_{N}\right) $) and $\mathbb{R}_{z}^{N}$ (the space $%
\mathbb{R}^{N}$ of variables $z=\left( z_{1},...,z_{N}\right) $),
respectively.

The study of this problem turns out to be of benefit to the modelling of
multi-phase flows with spatially varying viscosities. The homogenization of (%
\ref{eq1.2a})-(\ref{eq1.2d}) was first studied by the author \cite{bib11}
under the periodic hypothesis on the coefficients $a_{ij}$ with the scaling 
\begin{equation}
a_{ij}^{\varepsilon }\left( x\right) =a_{ij}\left( \frac{x}{\varepsilon }%
\right) \text{\qquad }\left( x\in \Omega \right) \text{,}  \label{eq1.3a}
\end{equation}%
the $a_{ij}$ belonging to $L^{\infty }\left( \mathbb{R}_{y}^{N};\mathbb{R}%
\right) $. Also, the steady state version of (\ref{eq1.2a})-(\ref{eq1.2d})
with the scaling%
\begin{equation}
a_{ij}^{\varepsilon }\left( x\right) =a_{ij}\left( \frac{x}{\varepsilon },%
\frac{x}{\varepsilon ^{2}}\right) \text{\qquad }\left( x\in \Omega \right)
\label{eq1.3b}
\end{equation}%
and $a_{ij}$ belonging to $L^{\infty }\left( \mathbb{R}_{y}^{N};\mathcal{B}%
\left( \mathbb{R}_{z}^{N};\mathbb{R}\right) \right) $ $\left( 1\leq i,j\leq
N\right) $ has been investigated by the author in \cite{bib12}. The author
presented a detailed mathematical analysis of 
\begin{equation*}
\left\{ 
\begin{array}{c}
P^{\varepsilon }\mathbf{u}_{\varepsilon }+\sum_{j=1}^{N}u_{\varepsilon }^{j}%
\frac{\partial \mathbf{u}_{\varepsilon }}{\partial x_{j}}+\mathbf{grad}%
p_{\varepsilon }=\mathbf{f}\text{ in }\Omega \text{,} \\ 
\qquad \qquad \qquad \qquad \qquad \quad div\mathbf{u}_{\varepsilon }=0\text{
in }\Omega \text{,} \\ 
\qquad \qquad \qquad \qquad \qquad \qquad \quad \mathbf{u}_{\varepsilon }=0%
\text{ on }\partial \Omega \text{,}%
\end{array}%
\right.
\end{equation*}%
with $\mathbf{f}=\left( f^{i}\right) \in H^{-1}\left( \Omega ;\mathbb{R}%
\right) ^{N}$ and (\ref{eq1.3b}), by the well-known approach of \textit{%
reiterated two-scale convergence}. Further, the homogenization of the steady
fluids flows has been investigated by many authors in various directions.
Let us bring up the work by M. Sango and J.L. Woukeng on the homogenization
of steady rotating fluids in \cite{bib10}. We can also mention the work of
S. Wright in \cite{bib15} and \cite{bib16}.

This paper deals with the reiterated homogenization of the unsteady
nonlinear fluids flows in the deterministic setting. The passage to the more
general behaviours of the viscosites (\textit{beyong the periodic setting})
is a simple adaptation of \cite{bib8}. As far as i know, this work is not
yet present in the literature of the Navier-Stokes type flows and
homogenization. Our approach is the reiterated two-scale convergence ideas 
\cite{bib5}, \cite{bib6}, \cite{bib8}, and the derived macroscopic
homogenized equations are of the Navier-Stokes type.

Unless otherwise specified, vector spaces throughout are considered over the
complex field, $\mathbb{C}$, and scalar functions are assumed to take
complex values. Let us recall some basic notation. If $X$ and $F$ denote a
locally compact space and a Banach space, respectively, then we write $%
\mathcal{C}\left( X;F\right) $ for continuous mappings of $X$ into $F$, and $%
\mathcal{B}\left( X;F\right) $ for those mappings in $\mathcal{C}\left(
X;F\right) $ that are bounded. We denote by $\mathcal{K}\left( X;F\right) $
the mappings in $\mathcal{C}\left( X;F\right) $ having compact supports.\ We
shall assume $\mathcal{B}\left( X;F\right) $ to be equipped with the
supremum norm $\left\Vert u\right\Vert _{\infty }=\sup_{x\in X}\left\Vert
u\left( x\right) \right\Vert $ ($\left\Vert {\small \cdot }\right\Vert $
denotes the norm in $F$). For shortness we will write $\mathcal{C}\left(
X\right) =\mathcal{C}\left( X;\mathbb{C}\right) $, $\mathcal{B}\left(
X\right) =\mathcal{B}\left( X;\mathbb{C}\right) $ and $\mathcal{K}\left(
X\right) =\mathcal{K}\left( X;\mathbb{C}\right) $. Likewise in the case when 
$F=\mathbb{C}$, the usual spaces $L^{p}\left( X;F\right) $ and $%
L_{loc}^{p}\left( X;F\right) $ ($X$ provided with a positive Radon measure)
will be denoted by $L^{p}\left( X\right) $ and $L_{loc}^{p}\left( X\right) $%
, respectively. Finally, the numerical space $\mathbb{R}^{N}$ and its open
sets are each provided with Lebesgue measure denoted by $dx=dx_{1}...dx_{N}$.

The rest of the paper is organized as follows: In Section 2 we recall the
concept of reiterated \textit{two-scale convergence }with all its
fundamental results. The Section 3 is devoted to the homogenization of
problem (\ref{eq1.2a})-(\ref{eq1.2d}).

\section{Preliminaries}

Let us first recall some notions of traces of functions. Let $B$ be a closed
vector subspace of $\mathcal{B}\left( \mathbb{R}_{y}^{N}\times \mathbb{R}%
_{\tau }\times \mathbb{R}_{z}^{N}\right) $ (where $\mathbb{R}_{\tau }$ is
the numerical space $\mathbb{R}$ of variable $\tau $ ). For any real number $%
\varepsilon >0$ and for any $u\in C(\overline{Q};B)$ we set 
\begin{equation}
u^{\varepsilon }\left( x,t\right) =u\left( x,t,\frac{x}{\varepsilon },\frac{t%
}{\varepsilon },\frac{x}{\varepsilon ^{2}}\right) \text{ for all }\left(
x,t\right) \in \overline{Q}\text{.}  \label{eq2.1a}
\end{equation}%
Then (\ref{eq2.1a}) defines a function $u^{\varepsilon }\in C\left( 
\overline{Q}\right) $ such that $\left\vert u^{\varepsilon }\left(
x,t\right) \right\vert \leq \left\Vert u\left( x,t\right) \right\Vert
_{\infty }$ for all $\left( x,t\right) \in \overline{Q}$. Further, one has $%
\left\Vert u^{\varepsilon }\right\Vert _{L^{p}\left( Q\right) }\leq
\left\Vert u\right\Vert _{L^{p}\left( Q;B\right) }$ $\left( 1\leq p\leq
\infty \right) $. Thus, we have a linear operator $u\rightarrow
u^{\varepsilon }$ of $C(\overline{Q};B)$ into $L^{p}\left( Q\right) $ which
is continuous for the $L^{p}\left( Q;B\right) $-norm. Hence, in virtue of
the density of $C(\overline{Q};B)$ in $L^{p}\left( Q;B\right) $ one has the
following proposition.

\begin{proposition}
\label{pr2.1} The operator $u\rightarrow u^{\varepsilon }$ of $C(\overline{Q}%
;B)$ into $L^{p}\left( Q\right) $ is extended \ by continuity to a unique
linear operator of $L^{p}\left( Q;B\right) $ into $L^{p}\left( Q\right) $
still denoted by $u\rightarrow u^{\varepsilon }$ such that $\left\Vert
u^{\varepsilon }\right\Vert _{L^{p}\left( Q\right) }$ $\leq \left\Vert
u\right\Vert _{L^{p}\left( Q;B\right) }$.
\end{proposition}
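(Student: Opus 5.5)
The plan is the standard extension-by-density (bounded linear transformation) argument. It has three ingredients: the pointwise estimate already noted before the statement, the density of $\mathcal{C}(\overline{Q};B)$ in $L^{p}(Q;B)$, and completeness of $L^{p}(Q)$. Throughout, $p$ is finite, $1\leq p<\infty$. For $p=\infty$ the space $\mathcal{C}(\overline{Q};B)$ is not dense in $L^{\infty}(Q;B)$, so the statement should be read for $1\leq p<\infty$.

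First I verify that the trace operator is well defined and bounded on $\mathcal{C}(\overline{Q};B)$. Let $u\in\mathcal{C}(\overline{Q};B)$.
\begin{itemize}
\item Continuity: the map $(x,t)\mapsto u(x,t,x/\varepsilon,t/\varepsilon,x/\varepsilon^{2})$ is continuous. Indeed, $u(x,t)$ is a continuous bounded function on $\mathbb{R}_{y}^{N}\times\mathbb{R}_{\tau}\times\mathbb{R}_{z}^{N}$, and $(x,t)\mapsto u(x,t)$ is continuous into $B$ for the sup norm. Writing
\begin{equation*}
\left\vert u^{\varepsilon}(x,t)-u^{\varepsilon}(x_{0},t_{0})\right\vert \leq \left\Vert u(x,t)-u(x_{0},t_{0})\right\Vert_{\infty}+\left\vert u(x_{0},t_{0})(\zeta)-u(x_{0},t_{0})(\zeta_{0})\right\vert ,
\end{equation*}
with $\zeta=(x/\varepsilon,t/\varepsilon,x/\varepsilon^{2})$ and $\zeta_{0}$ the corresponding point for $(x_{0},t_{0})$, both terms tend to $0$ as $(x,t)\to(x_{0},t_{0})$.
\item Pointwise bound: clearly $\left\vert u^{\varepsilon}(x,t)\right\vert\leq\left\Vert u(x,t)\right\Vert_{\infty}$.
\item $L^{p}$ bound: raising the pointwise bound to the power $p$ and integrating over $Q$ gives $\left\Vert u^{\varepsilon}\right\Vert_{L^{p}(Q)}\leq\left\Vert u\right\Vert_{L^{p}(Q;B)}$.
\item Linearity of $u\mapsto u^{\varepsilon}$ is immediate.
\end{itemize}

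Next I need density of $\mathcal{C}(\overline{Q};B)$ in $L^{p}(Q;B)$ for $1\leq p<\infty$. This is the standard fact for Bochner spaces. Simple functions $\sum\chi_{E_{k}}b_{k}$, with $b_{k}\in B$ and $E_{k}$ measurable in $Q$, are dense. Each $\chi_{E_{k}}$ can be approximated in $L^{p}(Q)$ by functions $\varphi_{k}\in\mathcal{C}(\overline{Q})$, and then $\sum\varphi_{k}b_{k}\in\mathcal{C}(\overline{Q};B)$ approximates the simple function in $L^{p}(Q;B)$. A subtlety is that $B$ may be non-separable; if the paper's $L^{p}(Q;B)$ is understood in the Bochner sense (strongly measurable functions), this causes no trouble.

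Now I define the extension. For $u\in L^{p}(Q;B)$, pick $u_{n}\in\mathcal{C}(\overline{Q};B)$ with $u_{n}\to u$. By linearity and the bound, $\left\Vert u_{n}^{\varepsilon}-u_{m}^{\varepsilon}\right\Vert_{L^{p}(Q)}\leq\left\Vert u_{n}-u_{m}\right\Vert_{L^{p}(Q;B)}$, so $(u_{n}^{\varepsilon})$ is Cauchy in $L^{p}(Q)$. I set $u^{\varepsilon}=\lim u_{n}^{\varepsilon}$.
\begin{itemize}
\item Independence of the approximating sequence: interlacing two approximating sequences gives a single Cauchy sequence, so both limits coincide.
\item Linearity and the norm bound pass to the limit.
\item Uniqueness: two continuous extensions agree on a dense set, hence agree everywhere.
\end{itemize}

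The only step requiring any care is the continuity of $u^{\varepsilon}$ together with the density statement; everything else is routine.
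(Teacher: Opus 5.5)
Your proposal is correct and follows the paper's argument: the paper also uses the pointwise bound $|u^{\varepsilon}(x,t)|\leq\|u(x,t)\|_{\infty}$ to get the $L^{p}$ estimate on $\mathcal{C}(\overline{Q};B)$, then extends by the density of $\mathcal{C}(\overline{Q};B)$ in $L^{p}(Q;B)$. You simply supply the routine details it leaves implicit (continuity of $u^{\varepsilon}$, the density argument, the Cauchy-sequence construction and uniqueness). Your restriction to $1\leq p<\infty$ is the right reading of ``extended by continuity'', since $\mathcal{C}(\overline{Q};B)$ is not dense in $L^{\infty}(Q;B)$; the paper's later mention of $p=\infty$ needs a separate argument, e.g.\ using $L^{\infty}(Q;B)\subset L^{p}(Q;B)$ on the bounded set $Q$ and letting $p\to\infty$ in the bound.
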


\noindent This gives a sense to (\ref{eq2.1a}) for $u\in L^{p}\left(
Q;B\right) $\quad $\left( 1\leq p\leq \infty \right) $. Let us define now $%
u^{\varepsilon }$ of the form (\ref{eq2.1a}) for $u\in C\left( \overline{Q}%
;L^{\infty }\left( \mathbb{R}_{y}^{N}\times \mathbb{R}_{\tau };\mathcal{B}%
\left( \mathbb{R}_{z}^{N}\right) \right) \right) $. We consider first a
function $\psi \in L^{\infty }\left( \mathbb{R}_{y}^{N}\times \mathbb{R}%
_{\tau };\mathcal{B}\left( \mathbb{R}_{z}^{N}\right) \right) $ and we put $%
^{\varepsilon }\psi \left( y,\tau \right) =\psi \left( y,\tau ,\frac{y}{%
\varepsilon }\right) $ for $\left( y,\tau \right) \in \mathbb{R}^{N}\times 
\mathbb{R}$ and $\varepsilon >0$. This gives the function $^{\varepsilon
}\psi \in L^{\infty }\left( \mathbb{R}_{y}^{N}\times \mathbb{R}_{\tau
}\right) $ with $\left\Vert ^{\varepsilon }\psi \right\Vert _{L^{\infty
}\left( \mathbb{R}_{y}^{N}\times \mathbb{R}_{\tau }\right) }\leq \left\Vert
\psi \right\Vert _{L^{\infty }\left( \mathbb{R}_{y}^{N}\times \mathbb{R}%
_{\tau };\mathcal{B}\left( \mathbb{R}_{z}^{N}\right) \right) }$. Next, we
define 
\begin{equation}
\psi ^{\varepsilon }\left( x,t\right) \mathfrak{=}^{\varepsilon }\psi \left( 
\frac{x}{\varepsilon },\frac{t}{\varepsilon }\right) =\psi \left( \frac{x}{%
\varepsilon },\frac{t}{\varepsilon },\frac{x}{\varepsilon ^{2}}\right) \text{%
\quad }\left( \left( x,t\right) \in \mathbb{R}^{N}\times \mathbb{R}\right) 
\text{.}  \label{eq2.1b}
\end{equation}%
We have $\psi ^{\varepsilon }\in L^{\infty }\left( \mathbb{R}^{N}\times 
\mathbb{R}\right) $ and $\left\Vert \psi ^{\varepsilon }\right\Vert
_{L^{\infty }\left( \mathbb{R}^{N}\times \mathbb{R}\right) }\leq \left\Vert
\psi \right\Vert _{L^{\infty }\left( \mathbb{R}_{y}^{N}\times \mathbb{R}%
_{\tau };\mathcal{B}\left( \mathbb{R}_{z}^{N}\right) \right) }$, of course.
Let us denote by $C\left( \overline{Q}\right) \otimes L^{\infty }\left( 
\mathbb{R}_{y}^{N}\times \mathbb{R}_{\tau };\mathcal{B}\left( \mathbb{R}%
_{z}^{N}\right) \right) $ the space of functions $u$ of the form 
\begin{equation*}
u=\sum_{i\in I}\varphi _{i}\otimes \psi _{i}\text{ with }\varphi _{i}\in
C\left( \overline{Q}\right) \text{ and }\psi _{i}\in L^{\infty }\left( 
\mathbb{R}_{y}^{N}\times \mathbb{R}_{\tau };\mathcal{B}\left( \mathbb{R}%
_{z}^{N}\right) \right) \text{,}
\end{equation*}%
where $I$ is a finite set of indices. For any $u\in C\left( \overline{Q}%
\right) \otimes L^{\infty }\left( \mathbb{R}_{y}^{N}\times \mathbb{R}_{\tau
};\mathcal{B}\left( \mathbb{R}_{z}^{N}\right) \right) $, we put 
\begin{equation}
u^{\varepsilon }\left( x,t\right) =\sum_{i\in I}\varphi _{i}\left(
x,t\right) \psi _{i}^{\varepsilon }\left( x,t\right) \text{ for }\left(
x,t\right) \in \overline{Q}  \label{eq2.1c}
\end{equation}%
where $\psi _{i}^{\varepsilon }$ is defined as in (\ref{eq2.1b}). One has $%
u^{\varepsilon }\in L^{\infty }\left( Q\right) $ with

\noindent $\left\Vert u^{\varepsilon }\right\Vert _{L^{\infty }\left(
Q\right) }\leq \sup_{\left( x,t\right) \in \overline{Q}}\left\Vert u\left(
x,t\right) \right\Vert _{L^{\infty }\left( \mathbb{R}_{y}^{N}\times \mathbb{R%
}_{\tau };\mathcal{B}\left( \mathbb{R}_{z}^{N}\right) \right) }$. In virtue
of the density of $C\left( \overline{Q}\right) \otimes L^{\infty }\left( 
\mathbb{R}_{y}^{N}\times \mathbb{R}_{\tau };\mathcal{B}\left( \mathbb{R}%
_{z}^{N}\right) \right) $ in $C\left( \overline{Q};L^{\infty }\left( \mathbb{%
R}_{y}^{N}\times \mathbb{R}_{\tau };\mathcal{B}\left( \mathbb{R}%
_{z}^{N}\right) \right) \right) $ (see for e.g. \cite[page 46]{bib3}), It
follows that

\begin{proposition}
\label{pr2.2} The operator $u\rightarrow u^{\varepsilon }$ of $C\left( 
\overline{Q}\right) \otimes L^{\infty }\left( \mathbb{R}_{y}^{N}\times 
\mathbb{R}_{\tau };\mathcal{B}\left( \mathbb{R}_{z}^{N}\right) \right) $
into $L^{\infty }\left( Q\right) $ ($u^{\varepsilon }$ being defined as in (%
\ref{eq2.1c})) is extended by continuity to a linear continuous operator
still denoted by $u\rightarrow u^{\varepsilon }$, of $C\left( \overline{Q}%
;L^{\infty }\left( \mathbb{R}_{y}^{N}\times \mathbb{R}_{\tau };\mathcal{B}%
\left( \mathbb{R}_{z}^{N}\right) \right) \right) $ into $L^{\infty }\left(
Q\right) $.
\end{proposition}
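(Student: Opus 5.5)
The plan is the standard extension-by-continuity argument: show the map is bounded on a dense subspace for the target norm, then extend uniquely. Write $E=L^{\infty }\left( \mathbb{R}_{y}^{N}\times \mathbb{R}_{\tau };\mathcal{B}\left( \mathbb{R}_{z}^{N}\right) \right) $ and equip $C\left( \overline{Q};E\right) $ with the sup norm $\left\Vert u\right\Vert =\sup_{\left( x,t\right) \in \overline{Q}}\left\Vert u\left( x,t\right) \right\Vert _{E}$. The target is $L^{\infty }\left( Q\right) $ with its usual norm. Density of $C\left( \overline{Q}\right) \otimes E$ in $C\left( \overline{Q};E\right) $ has already been recalled above (via \cite[page 46]{bib3}), so the remaining work is the bound on the algebraic tensor product.

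\textbf{Step 1 (well-definedness on the tensor product).} A given $u$ has many representations $\sum_{i\in I}\varphi _{i}\otimes \psi _{i}$, so I must check that the right-hand side of (\ref{eq2.1c}) does not depend on the representation. By linearity it suffices to show that $\sum_{i}\varphi _{i}\otimes \psi _{i}=0$ implies $\sum_{i}\varphi _{i}\psi _{i}^{\varepsilon }=0$ a.e. in $Q$. I will reduce to the case where the $\varphi _{i}$ are linearly independent in $C(\overline{Q})$, by re-expanding in a basis of their span. Then $\sum_{i}\varphi _{i}(x,t)\psi _{i}=0$ in $E$ for every $(x,t)$ forces $\psi _{i}=0$ in $E$ for each $i$. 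It follows that $\psi _{i}^{\varepsilon }=0$ a.e., using the bound $\left\Vert \psi ^{\varepsilon }\right\Vert _{L^{\infty }}\leq \left\Vert \psi \right\Vert _{E}$ already noted.

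\textbf{Step 2 (the key estimate).} I will prove $\left\Vert u^{\varepsilon }\right\Vert _{L^{\infty }(Q)}\leq \sup_{(x,t)\in \overline{Q}}\left\Vert u(x,t)\right\Vert _{E}$ for $u$ in the tensor product. The paper asserts this bound, and this is the step needing care. The naive estimate $\sum_{i}\left\Vert \varphi _{i}\right\Vert _{\infty }\left\Vert \psi _{i}\right\Vert _{E}$ is too crude and would not extend. Instead I fix a point $(x_{0},t_{0})$ and compare $u^{\varepsilon }$ near it with $\left( \sum_{i}\varphi _{i}(x_{0},t_{0})\psi _{i}\right) ^{\varepsilon }$. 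The latter is a single element of $E$, so its trace has $L^{\infty }$ norm at most $\left\Vert u(x_{0},t_{0})\right\Vert _{E}$. The error is bounded by $\sum_{i}\left\vert \varphi _{i}(x,t)-\varphi _{i}(x_{0},t_{0})\right\vert \left\Vert \psi _{i}\right\Vert _{E}$, which uniform continuity of the finitely many $\varphi _{i}$ on the compact set $\overline{Q}$ makes smaller than any $\eta >0$ on a small ball around $(x_{0},t_{0})$. Covering $\overline{Q}$ by finitely many such balls gives $\left\vert u^{\varepsilon }\right\vert \leq \sup \left\Vert u\right\Vert _{E}+\eta $ a.e., and letting $\eta \rightarrow 0$ yields the bound. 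I expect this localization to be the main obstacle, since the pointwise value $u^{\varepsilon }(x,t)$ is only defined a.e. and the argument must go through the $L^{\infty }$ bound on traces of elements of $E$ rather than through pointwise evaluation.

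\textbf{Step 3 (extension).} Step 2 shows the linear map $u\mapsto u^{\varepsilon }$ is uniformly continuous, indeed $1$-Lipschitz, from the dense subspace into the Banach space $L^{\infty }(Q)$. By the standard extension theorem for bounded linear maps, it extends uniquely to a continuous linear operator on $C\left( \overline{Q};E\right) $, with the same norm bound. Concretely, for $u\in C(\overline{Q};E)$ one takes $u_{n}$ in the tensor product converging to $u$, and $(u_{n}^{\varepsilon })$ is Cauchy in $L^{\infty }(Q)$; its limit is independent of the approximating sequence by the same bound.
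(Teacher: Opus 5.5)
Your proposal is correct and follows the same route as the paper: the bound $\left\Vert u^{\varepsilon }\right\Vert _{L^{\infty }(Q)}\leq \sup_{(x,t)\in \overline{Q}}\left\Vert u(x,t)\right\Vert _{L^{\infty }(\mathbb{R}_{y}^{N}\times \mathbb{R}_{\tau };\mathcal{B}(\mathbb{R}_{z}^{N}))}$ on the algebraic tensor product, its density in $C(\overline{Q};L^{\infty }(\mathbb{R}_{y}^{N}\times \mathbb{R}_{\tau };\mathcal{B}(\mathbb{R}_{z}^{N})))$, and extension by continuity. Your Steps 1 and 2 justify what the paper only asserts, namely independence of the representation and the localization proof of the bound; Step 2 never uses a particular representation, so applying it to any representation of $u=0$ already gives Step 1.
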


Let us recall the following preliminaries. We assume once for all that $N=2$%
. We set $Y=\left( -\frac{1}{2},\frac{1}{2}\right) ^{N}$ , $Z=\left( -\frac{1%
}{2},\frac{1}{2}\right) ^{N}$ and $\mathcal{T=}\left( -\frac{1}{2},\frac{1}{2%
}\right) $, $Y$, $Z$ and $\mathcal{T}$ being considered as subsets of $%
\mathbb{R}_{y}^{N}$ (the space $\mathbb{R}^{N}$ of variables $y=\left(
y_{1},...,y_{N}\right) $), $\mathbb{R}_{z}^{N}$ (the space $\mathbb{R}^{N}$
of variables $z=\left( z_{1},...,z_{N}\right) $) and $\mathbb{R}_{\tau }$
(the space $\mathbb{R}$ of variable $\tau $), respectively. Our purpose is
to study the homogenization of (\ref{eq1.2a})-(\ref{eq1.2d}) under the
periodicity hypothesis on $a_{ij}$.

Let us first recall that a function $u\in L_{loc}^{1}\left( \mathbb{R}%
_{y}^{N}\times \mathbb{R}_{\tau }\times \mathbb{R}_{z}^{N}\right) $ is said
to be $Y\times \mathcal{T}\times Z$-periodic if for each $\left(
k,r,l\right) \in \mathbb{Z}^{N}\times \mathbb{Z\times Z}^{N}$ ($\mathbb{Z}$
denotes the integers), we have $u\left( y+k,\tau +r,z+l\right) =u\left(
y,\tau ,z\right) $ almost everywhere (a.e.) in $\left( y,\tau ,z\right) \in 
\mathbb{R}^{N}\times \mathbb{R\times R}^{N}$. The space of all $Y\times 
\mathcal{T}\times Z$-periodic continuous complex functions on $\mathbb{R}%
_{y}^{N}\times \mathbb{R}_{\tau }\times \mathbb{R}_{z}^{N}$ is denoted by $%
\mathcal{C}_{per}\left( Y\times \mathcal{T}\times Z\right) $; that of all $%
Y\times \mathcal{T}\times Z$-periodic functions in $L_{loc}^{p}\left( 
\mathbb{R}_{y}^{N}\times \mathbb{R}_{\tau }\times \mathbb{R}_{z}^{N}\right) $
$\left( 1\leq p<\infty \right) $ is denoted by $L_{per}^{p}\left( Y\times 
\mathcal{T}\times Z\right) $. $\mathcal{C}_{per}\left( Y\times \mathcal{T}%
\times Z\right) $ is a Banach space under the supremum norm on $\mathbb{R}%
^{N}\times \mathbb{R\times R}^{N}$, whereas $L_{per}^{p}\left( Y\times 
\mathcal{T}\times Z\right) $ is a Banach space under the norm 
\begin{equation*}
\left\Vert u\right\Vert _{L^{p}\left( Y\times Z\right) }=\left( \int \int
\int_{Y\times \mathcal{T}\times Z}\left\vert u\left( y,z\right) \right\vert
^{p}dydzd\tau \right) ^{\frac{1}{p}}\text{ }\left( u\in L_{per}^{p}\left(
Y\times \mathcal{T}\times Z\right) \right) \text{.}
\end{equation*}

We will need the space $H_{\#}^{1}\left( Y\right) $ (resp. $H_{\#}^{1}\left(
Z\right) $) of $Y$-periodic (resp. $Z$-periodic) functions $u\in
H_{loc}^{1}\left( \mathbb{R}_{y}^{N}\right) =W_{loc}^{1,2}\left( \mathbb{R}%
_{y}^{N}\right) $ (resp. $u\in H_{loc}^{1}\left( \mathbb{R}_{z}^{N}\right)
=W_{loc}^{1,2}\left( \mathbb{R}_{z}^{N}\right) $) such that $\int_{Y}u\left(
y\right) dy=0$ (resp. $\int_{Z}u\left( z\right) dz=0$). Provided with the
gradient norm, 
\begin{equation*}
\left\Vert u\right\Vert _{H_{\#}^{1}\left( Y\right) }=\left(
\int_{Y}\left\vert \nabla _{y}u\right\vert ^{2}dy\right) ^{\frac{1}{2}}\text{
}\left( u\in H_{\#}^{1}\left( Y\right) \right)
\end{equation*}%
\begin{equation*}
\text{(resp. }\left\Vert u\right\Vert _{H_{\#}^{1}\left( Z\right) }=\left(
\int_{Z}\left\vert \nabla _{z}u\right\vert ^{2}dz\right) ^{\frac{1}{2}}\text{
}\left( u\in H_{\#}^{1}\left( Z\right) \right) \text{),}
\end{equation*}%
where $\nabla _{y}u=\left( \frac{\partial u}{\partial y_{1}},...,\frac{%
\partial u}{\partial y_{N}}\right) $ (resp. $\nabla _{z}u=\left( \frac{%
\partial u}{\partial z_{1}},...,\frac{\partial u}{\partial z_{N}}\right) $), 
$H_{\#}^{1}\left( Y\right) $ (resp. $H_{\#}^{1}\left( Z\right) $) is a
Hilbert space.

Before we can recall the concept of reiterated two-scale convergence, let us
introduce one further notation. The letter $E$ throughout will denote a
family of real numbers $0<\varepsilon <1$ admitting $0$ as an accumulation
point. For example, $E$ may be the whole interval $\left( 0,1\right) $; $E$
may also be an ordinary sequence $\left( \varepsilon _{n}\right) _{n\in 
\mathbb{N}}$ with $0<\varepsilon _{n}<1$ and $\varepsilon _{n}\rightarrow 0$
as $n\rightarrow \infty $. In the latter case $E$ will be referred to as a 
\textit{fundamental sequence}. Let us also recall the following fundamental
results.

\begin{proposition}
\label{pr2.3} For any $u\in L^{p}\left( Q;\mathcal{C}_{per}\left( Y\times 
\mathcal{T}\times Z\right) \right) $ $\left( 1\leq p<+\infty \right) $, we
consider the complex function $\widetilde{u}$ on $Q$ defined by $\widetilde{u%
}\left( x,t\right) =\int \int \int_{Y\times \mathcal{T}\times Z}u\left(
x,y,\tau ,z\right) dydzd\tau $\quad $\left( \left( x,t\right) \in Q\right) $%
. As $\varepsilon \rightarrow 0$, we have $u^{\varepsilon }\rightarrow 
\widetilde{u}$ in $L^{\infty }\left( Q\right) $-weak $\ast $ for $u\in 
\mathcal{C}\left( \overline{Q};\mathcal{C}_{per}\left( Y\times \mathcal{T}%
\times Z\right) \right) $ and $u^{\varepsilon }\rightarrow \widetilde{u}$ in 
$L^{p}\left( Q\right) $-weak for $u\in L^{p}\left( Q;\mathcal{C}_{per}\left(
Y\times \mathcal{T}\times Z\right) \right) $, where $u^{\varepsilon }$ is
defined as in (\ref{eq2.1a}).
\end{proposition}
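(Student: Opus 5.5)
The plan is to prove the first assertion (continuous $u$) directly and then deduce the second from it by density, using the uniform bound of Proposition \ref{pr2.1}. Throughout, $u^{\varepsilon}$ is the trace (\ref{eq2.1a}).

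\emph{Step 1: elementary tensors.} Take $u=\varphi\otimes\psi$ with $\varphi\in\mathcal{C}(\overline{Q})$ and $\psi\in\mathcal{C}_{per}(Y\times\mathcal{T}\times Z)$. By the Stone--Weierstrass theorem, $\psi$ is a uniform limit of trigonometric polynomials in $(y,\tau,z)$, so it suffices to take $\psi(y,\tau,z)=\exp\bigl(2i\pi(k\cdot y+r\tau+l\cdot z)\bigr)$ with $(k,r,l)\in\mathbb{Z}^{N}\times\mathbb{Z}\times\mathbb{Z}^{N}$. Then
\begin{equation*}
\psi^{\varepsilon}(x,t)=\exp\Bigl(2i\pi\bigl(\tfrac{k\cdot x}{\varepsilon}+\tfrac{l\cdot x}{\varepsilon^{2}}+\tfrac{rt}{\varepsilon}\bigr)\Bigr).
\end{equation*}
If $(k,r,l)=0$ this is the constant $1=\widetilde{\psi}$. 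Otherwise $\widetilde{\psi}=0$, and we must show $\int_{Q}\psi^{\varepsilon}\theta\,dxdt\to0$ for every $\theta\in L^{1}(Q)$. Since $|\psi^{\varepsilon}|\le1$, it is enough to check this for $\theta\in\mathcal{D}(Q)$.
\begin{itemize}
\item If $l\neq0$, or $l=0$ and $k\neq0$, the phase has nonzero $x$-gradient, of size $|k/\varepsilon+l/\varepsilon^{2}|\geq c/\varepsilon$ for small $\varepsilon$. Integrating by parts in $x$ once gives $|\int_{Q}\psi^{\varepsilon}\theta|\le C\varepsilon\|\nabla\theta\|_{L^{1}}$.
\item If $k=l=0$ and $r\neq0$, integrate by parts in $t$ instead.
\end{itemize}
This is the classical Riemann--Lebesgue argument. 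Combining it with the approximation of $\psi$ gives $\psi^{\varepsilon}\to\widetilde{\psi}$ weak-$\ast$ in $L^{\infty}$, and multiplying by $\varphi$ gives the claim for elementary tensors.

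\emph{Step 2: all of $\mathcal{C}(\overline{Q};\mathcal{C}_{per})$.} Finite sums of elementary tensors are dense in $\mathcal{C}(\overline{Q};\mathcal{C}_{per}(Y\times\mathcal{T}\times Z))$ for the sup norm. Both $u\mapsto u^{\varepsilon}$ and $u\mapsto\widetilde{u}$ are bounded by the sup norm, uniformly in $\varepsilon$, since $|u^{\varepsilon}(x,t)|\le\|u(x,t)\|_{\infty}$. A standard $3\eta$ argument against a fixed $\theta\in L^{1}(Q)$ then gives weak-$\ast$ convergence for general $u$. The same argument with test functions $\theta\in L^{p'}(Q)$ yields weak $L^{p}$ convergence for continuous $u$, because $Q$ is bounded.

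\emph{Step 3: $u\in L^{p}(Q;\mathcal{C}_{per})$.} Approximate $u$ by $v\in\mathcal{C}(\overline{Q};\mathcal{C}_{per})$ in $L^{p}(Q;\mathcal{C}_{per})$. Proposition \ref{pr2.1} gives $\|u^{\varepsilon}-v^{\varepsilon}\|_{L^{p}(Q)}\le\|u-v\|_{L^{p}(Q;\mathcal{C}_{per})}$, and we also have $\|\widetilde{u}-\widetilde{v}\|_{L^{p}(Q)}\le\|u-v\|_{L^{p}(Q;\mathcal{C}_{per})}$. So for $\theta\in L^{p'}(Q)$,
\begin{equation*}
\Bigl|\int_{Q}(u^{\varepsilon}-\widetilde{u})\theta\Bigr|\le2\|u-v\|\,\|\theta\|_{L^{p'}}+\Bigl|\int_{Q}(v^{\varepsilon}-\widetilde{v})\theta\Bigr|,
\end{equation*}
and the last term tends to $0$ by Step 2.

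The only genuinely nontrivial point is the oscillatory-integral estimate in Step 1 when the two spatial scales interact. The combined $x$-frequency $k/\varepsilon+l/\varepsilon^{2}$ could in principle be small, but it is not: for $l\neq0$ it is dominated by $l/\varepsilon^{2}$, and for $l=0,\ k\neq0$ it equals $k/\varepsilon$. The integration by parts must be organized with this case split in mind.
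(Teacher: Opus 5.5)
Your proof is correct. The paper gives no argument of its own here; it only points to \cite[Propositions 1.9 and 1.10]{bib9}. The scheme behind that pointer is exactly your Steps 2 and 3: first the continuous case via elementary tensors $\varphi\otimes\psi$ and the bound $|u^{\varepsilon}(x,t)|\le\|u(x,t)\|_{\infty}$, then the $L^{p}$ case by density and the estimate of Proposition \ref{pr2.1}.

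The genuine difference is your Step 1. The citation leaves implicit the one point specific to the reiterated setting: that $\psi(x/\varepsilon,t/\varepsilon,x/\varepsilon^{2})\to\int\int\int_{Y\times\mathcal{T}\times Z}\psi$ weakly-$\ast$ for $\psi\in\mathcal{C}_{per}(Y\times\mathcal{T}\times Z)$. This does not follow from the one-scale periodic lemma applied variable by variable. The variables $y$ and $z$ are evaluated at correlated points, and a product of oscillating factors need not converge to the product of their means.

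Your trigonometric-polynomial argument settles this directly and shows where the separation of scales is used. For $l\neq0$ the $x$-frequency $(\varepsilon k+l)/\varepsilon^{2}$ stays of order $\varepsilon^{-2}$. With two equal scales, by contrast, the modes with $l=-k$ would be resonant and would not average out.

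Your route gives a self-contained proof and an explicit $O(\varepsilon)$ rate for trigonometric $\psi$ and smooth test functions. An argument phrased through an abstract mean-value property, in the spirit of \cite{bib9} and \cite{bib8}, instead extends to general algebras with mean value. Those are the non-periodic settings the paper mentions in its introduction, where no Fourier expansion is available.
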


\begin{proof}
It is a simple adaptation of the proof in \cite[Proposition 1.9 and
Proposition 1.10]{bib9}.
\end{proof}

\begin{definition}
\label{def2.1} A sequence $\left( u_{\varepsilon }\right) _{\varepsilon \in
E}\subset L^{p}\left( Q\right) $ $\left( 1\leq p<\infty \right) $ is said to:

(i) weakly two-scale converge reiteratively in $L^{p}\left( Q\right) $ to
some $u_{0}\in L^{p}\left( Q;L_{per}^{p}\left( Y\times \mathcal{T}\times
Z\right) \right) $ if as

\noindent $E\ni \varepsilon \rightarrow 0$, 
\begin{equation}
\int_{Q}u_{\varepsilon }\left( x,t\right) \psi ^{\varepsilon }\left(
x,t\right) dx\rightarrow \int \int \int \int_{Q\times Y\times \mathcal{T}%
\times Z}u_{0}\left( x,y,\tau ,z\right) \psi \left( x,y,\tau ,z\right)
dxdtdydzd\tau  \label{eq2.1}
\end{equation}%
\begin{equation*}
\begin{array}{c}
\text{for all }\psi \in L^{p^{\prime }}\left( Q;\mathcal{C}_{per}\left(
Y\times \mathcal{T}\times Z\right) \right) \text{ }\left( \frac{1}{p^{\prime
}}=1-\frac{1}{p}\right) \text{, where }\psi ^{\varepsilon }\left( x,t\right)
= \\ 
\psi \left( x,t,\frac{x}{\varepsilon },\frac{t}{\varepsilon },\frac{x}{%
\varepsilon ^{2}}\right) \text{ }\left( \left( x,t\right) \in Q\right) \text{%
;}%
\end{array}%
\end{equation*}

(ii) strongly two-scale converge reiteratively in $L^{p}\left( Q\right) $ to
some $u_{0}\in L^{p}\left( Q;L_{per}^{p}\left( Y\times \mathcal{T}\times
Z\right) \right) $ if the following property is verified: 
\begin{equation*}
\left\{ 
\begin{array}{c}
\text{Given }\eta >0\text{ and }v\in L^{p}\left( Q;\mathcal{C}_{per}\left(
Y\times \mathcal{T}\times Z\right) \right) \text{ with} \\ 
\left\Vert u_{0}-v\right\Vert _{L^{p}\left( Q\times Y\times \mathcal{T}%
\times Z\right) }\leq \frac{\eta }{2}\text{, there is some }\alpha >0\text{
such} \\ 
\text{that }\left\Vert u_{\varepsilon }-v^{\varepsilon }\right\Vert
_{L^{p}\left( Q\right) }\leq \eta \text{ provided }E\ni \varepsilon \leq
\alpha \text{,}%
\end{array}%
\right.
\end{equation*}%
where $v^{\varepsilon }\left( x,t\right) =v\left( x,t,\frac{x}{\varepsilon },%
\frac{t}{\varepsilon },\frac{x}{\varepsilon ^{2}}\right) $ $\left( \left(
x,t\right) \in Q\right) $.
\end{definition}

We will express this by writing $u_{\varepsilon }\rightarrow u_{0}$ \textit{%
reiteratively in} $L^{p}\left( Q\right) $-\textit{weak} $\Sigma $ \ for the
case in (i), and $u_{\varepsilon }\rightarrow u_{0}$ \textit{reiteratively in%
} $L^{p}\left( Q\right) $-\textit{strong} $\Sigma $ in case (ii).

\begin{example}
\label{ex2.1a} For any $u\in L^{p}\left( Q;\mathcal{C}_{per}\left( Y\times 
\mathcal{T}\times Z\right) \right) $, 
\begin{equation}
u^{\varepsilon }\rightarrow u\text{ reiteratively in }L^{p}\left( Q\right) 
\text{-weak }\Sigma \text{,}  \label{eq2.2}
\end{equation}%
as $\varepsilon \rightarrow 0$ (use Proposition \ref{pr2.3}). In particular,
(\ref{eq2.2}) is satisfied for $u\in \mathcal{C}_{per}\left( Y\times 
\mathcal{T}\times Z\right) $ as $\varepsilon \rightarrow 0$.
\end{example}

\begin{example}
\label{ex2.1b} (i) Suppose $u_{0}\in L^{p}\left( Q;\mathcal{C}_{per}\left(
Y\times \mathcal{T}\times Z\right) \right) $. Then $u_{\varepsilon
}\rightarrow u_{0}$ reiteratively in $L^{p}\left( Q\right) $-\textit{strong} 
$\Sigma $ if and only if $\left\Vert u_{\varepsilon }-u_{0}^{\varepsilon
}\right\Vert _{L^{p}\left( Q\right) }\rightarrow 0$ as $E\ni \varepsilon
\rightarrow 0$.

(ii) For any $u\in L^{p}\left( Q;\mathcal{C}_{per}\left( Y\times \mathcal{T}%
\times Z\right) \right) $, $u^{\varepsilon }\rightarrow u$ reiteratively in $%
L^{p}\left( Q\right) $-strong $\Sigma $.

(iii) If $u_{\varepsilon }\rightarrow u_{0}$ in $L^{p}\left( Q\right) $ as $%
E\ni \varepsilon \rightarrow 0$, then $u_{\varepsilon }\rightarrow u_{0}$
reiteratively in $L^{p}\left( Q\right) $-\textit{strong} $\Sigma $.
\end{example}

\begin{proposition}
\label{pr2.4} Suppose that a sequence $\left( u_{\varepsilon }\right)
_{\varepsilon \in E}\subset L^{p}\left( Q\right) $ weakly two-scale
converges reiteratively in $L^{p}\left( Q\right) $ to some $u_{0}\in
L^{p}\left( Q;L_{per}^{p}\left( Y\times \mathcal{T}\times Z\right) \right) $%
. Let $u_{0}^{\#}\in L^{p}\left( Q;L_{per}^{p}\left( Y\times \mathcal{T}%
\right) \right) $ be defined by $u_{0}^{\#}\left( x,t,y,\tau \right)
=\int_{Z}u_{0}\left( x,t,y,\tau ,z\right) dz$ \ for almost all $\left(
x,t\right) \in Q$ and almost all $\left( y,\tau \right) \in Y\times \mathcal{%
T}$, and let $\widetilde{u}_{0}\in L^{p}\left( Q\right) $ be defined by $%
\widetilde{u}_{0}\left( x,t\right) =\int \int \int_{Y\times \mathcal{T}%
\times Z}u_{0}\left( x,t,y,\tau ,z\right) dydzd\tau $ $\ \left( \left(
x,t\right) \in Q\right) $. Then,

(i) $\left( u_{\varepsilon }\right) _{\varepsilon \in E}$ two-scale
converges in $L^{p}\left( Q\right) $ to $u_{0}^{\#}$, i.e., as $E\ni
\varepsilon \rightarrow 0$,%
\begin{equation*}
\int_{Q}u_{\varepsilon }\left( x,t\right) f^{\varepsilon }\left( x,t\right)
dxdt\rightarrow \int \int \int_{Q\times Y\times \mathcal{T}}u_{0}^{\#}\left(
x,t,y,\tau \right) f\left( x,t,y,\tau \right) dxdtdyd\tau
\end{equation*}%
for all $f\in L^{p^{\prime }}\left( Q;\mathcal{C}_{per}\left( Y\times 
\mathcal{T}\right) \right) $ (with $f^{\varepsilon }\left( x,t\right)
=f\left( x,t,\frac{x}{\varepsilon },\frac{t}{\varepsilon }\right) $\quad $%
\left( \left( x,t\right) \in Q\right) $);

(ii) $\left( u_{\varepsilon }\right) _{\varepsilon \in E}$ weakly converges
in $L^{p}\left( Q\right) $ to $\widetilde{u}_{0}$.
\end{proposition}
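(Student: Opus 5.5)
The plan is to test the defining relation (\ref{eq2.1}) of reiterated weak two-scale convergence against functions that do not depend on some of the fast variables, and then to read off the limits by Fubini's theorem.

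\emph{Proof of (i).} Let $f\in L^{p^{\prime }}\left( Q;\mathcal{C}_{per}\left( Y\times \mathcal{T}\right) \right) $ be given. Define
\begin{equation*}
\psi \left( x,t,y,\tau ,z\right) =f\left( x,t,y,\tau \right) .
\end{equation*}
We first check that $\psi \in L^{p^{\prime }}\left( Q;\mathcal{C}_{per}\left( Y\times \mathcal{T}\times Z\right) \right) $. The map $\mathcal{C}_{per}\left( Y\times \mathcal{T}\right) \rightarrow \mathcal{C}_{per}\left( Y\times \mathcal{T}\times Z\right) $, $g\mapsto g\otimes 1$, is a linear isometry for the supremum norms. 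Composing the strongly measurable function $f$ with it therefore yields a strongly measurable $\psi $, with the same $L^{p^{\prime }}$ norm.

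Next, by construction $\psi ^{\varepsilon }\left( x,t\right) =f\left( x,t,\frac{x}{\varepsilon },\frac{t}{\varepsilon }\right) =f^{\varepsilon }\left( x,t\right) $. This is immediate for $f\in C(\overline{Q};\mathcal{C}_{per}(Y\times\mathcal{T}))$. For general $f$ it follows from the uniqueness of the continuous extension in Proposition \ref{pr2.1}, applied with $B=\mathcal{C}_{per}\left( Y\times \mathcal{T}\times Z\right) $: both traces depend continuously on $f$ in $L^{p^{\prime }}$ and agree on the dense subspace of continuous functions.

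Applying (\ref{eq2.1}) to $\psi $ gives
\begin{equation*}
\int_{Q}u_{\varepsilon }f^{\varepsilon }dxdt\rightarrow \int_{Q\times Y\times \mathcal{T}\times Z}u_{0}\left( x,t,y,\tau ,z\right) f\left( x,t,y,\tau \right) dxdtdyd\tau dz .
\end{equation*}
Since $u_{0}f\in L^{1}$ by H\"{o}lder's inequality and $\left\vert Z\right\vert =1$, Fubini's theorem lets us integrate in $z$ first. This turns the right-hand side into $\int_{Q\times Y\times \mathcal{T}}u_{0}^{\#}f\,dxdtdyd\tau $, which proves (i).

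We also note that $u_{0}^{\#}\in L^{p}\left( Q;L_{per}^{p}\left( Y\times \mathcal{T}\right) \right) $. Measurability comes from Fubini's theorem, periodicity in $(y,\tau )$ is inherited from $u_{0}$, and the $L^{p}$ bound follows from Jensen's inequality on $Z$.

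\emph{Proof of (ii).} Take $\varphi \in L^{p^{\prime }}\left( Q\right) $, viewed as a function constant in $\left( y,\tau ,z\right) $. It lies in $L^{p^{\prime }}\left( Q;\mathcal{C}_{per}\right) $ and satisfies $\varphi ^{\varepsilon }=\varphi $. Applying (\ref{eq2.1}) and Fubini's theorem again gives
\begin{equation*}
\int_{Q}u_{\varepsilon }\varphi \,dxdt\rightarrow \int_{Q}\widetilde{u}_{0}\varphi \,dxdt .
\end{equation*}
Since $L^{p^{\prime }}(Q)$ is the dual of $L^{p}(Q)$ for $1\leq p<\infty $, this is exactly weak convergence of $u_{\varepsilon }$ to $\widetilde{u}_{0}$ in $L^{p}\left( Q\right) $. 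Alternatively, (ii) follows from (i) with $f=\varphi $.

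The only delicate point is the identification of the traces $\psi ^{\varepsilon }=f^{\varepsilon }$ for non-continuous $f$, which requires the density and uniqueness argument of Proposition \ref{pr2.1}. Everything else is bookkeeping with Fubini's theorem.
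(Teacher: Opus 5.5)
Your proposal is correct and follows the paper's argument: the paper also notes that $L^{p'}(Q;\mathcal{C}_{per}(Y\times\mathcal{T}))\otimes\mathcal{C}_{per}(Z)$ and $L^{p'}(Q)\otimes\mathcal{C}_{per}(Y\times\mathcal{T}\times Z)$ lie in the test space of (\ref{eq2.1}), tests with functions independent of $z$ (resp.\ of all fast variables), and integrates out the missing variables. You additionally make explicit the trace identification $\psi^{\varepsilon}=f^{\varepsilon}$ and the H\"older/Fubini bookkeeping that the paper leaves implicit; for $p=1$, i.e.\ $p'=\infty$, check that identification pointwise for strongly measurable $f$ (or via $L^{q}$ with $q<\infty$ on the bounded set $Q$), because $C(\overline{Q};B)$ is not dense in $L^{\infty}(Q;B)$.
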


\begin{proof}
Let us notice that $L^{p^{\prime }}\left( Q;\mathcal{C}_{per}\left( Y\times 
\mathcal{T}\right) \right) \otimes \mathcal{C}_{per}\left( Z\right) $ and $%
L^{p^{\prime }}\left( Q\right) \otimes \mathcal{C}_{per}\left( Y\times 
\mathcal{T}\times Z\right) $ are the subsets of $L^{p^{\prime }}\left( Q;%
\mathcal{C}_{per}\left( Y\times \mathcal{T}\times Z\right) \right) $. Thus,
on one hand 
\begin{equation*}
\int_{Q}u_{\varepsilon }\left( x,t\right) f\left( x,t,\frac{x}{\varepsilon },%
\frac{t}{\varepsilon }\right) dxdt\rightarrow
\end{equation*}%
\begin{equation*}
\int \int \int_{Q\times Y\times \mathcal{T}}\left( \int_{Z}u_{0}\left(
x,t,y,\tau ,z\right) dz\right) f\left( x,t,y,\tau \right) dxdtdyd\tau
\end{equation*}%
for all $f\in L^{p^{\prime }}\left( Q;\mathcal{C}_{per}\left( Y\times 
\mathcal{T}\right) \right) $, and on the other hand%
\begin{equation*}
\int_{Q}u_{\varepsilon }\left( x,t\right) f\left( x,t\right) dxdt\rightarrow
\int_{Q}\left( \int \int_{Y\times \mathcal{T}\times Z}u_{0}\left( x,t,y,\tau
,z\right) dydz\right) f\left( x,t\right) dxdt
\end{equation*}%
for all $f\in L^{p^{\prime }}\left( Q\right) $ as $E\ni \varepsilon
\rightarrow 0$. Hence, (i) and (ii) follow.
\end{proof}

\begin{remark}
\label{rem2.1} It is of interest to know that if $u_{\varepsilon
}\rightarrow u_{0}$ reiteratively in $L^{p}\left( Q\right) $-weak $\Sigma $,
then (\ref{eq2.1}) holds for $\psi \in \mathcal{C}\left( \overline{Q}%
;L_{per}^{\infty }\left( Y\times \mathcal{T};\mathcal{C}_{per}\left(
Z\right) \right) \right) $. The proof is analogous to the one in \cite[%
Proposition 3.3]{bib9}. Further, as a consequence of Proposition \ref{pr2.4}%
, if $u\in L^{p}\left( Q;\mathcal{C}_{per}\left( Y\times \mathcal{T}\times
Z\right) \right) $ then $u^{\varepsilon }\rightarrow u$\ reiteratively in $%
L^{p}\left( Q\right) $-strong $\Sigma $ and\emph{\ }%
\begin{equation}
\lim_{\varepsilon \rightarrow 0}\left\Vert u^{\varepsilon }\right\Vert
_{L^{p}\left( Q\right) }=\left\Vert u\right\Vert _{L^{p}\left(
Q;L_{per}^{p}\left( Y\times \mathcal{T}\times Z\right) \right) }\text{.}
\label{eq2.3}
\end{equation}
\end{remark}

Let us state the following important propositions.

\begin{proposition}
\label{pr2.5} Suppose a sequence $\left( u_{\varepsilon }\right)
_{\varepsilon \in E}$ strongly two-scale converges reiteratively in $%
L^{p}\left( Q\right) $ $\left( 1\leq p<\infty \right) $ to some $u_{0}\in
L^{p}\left( Q;L_{per}^{p}\left( Y\times \mathcal{T}\times Z\right) \right) $%
. Then, as $E\ni \varepsilon \rightarrow 0$,

(i) $u_{\varepsilon }\rightarrow u_{0}$ reiteratively in $L^{p}\left(
Q\right) $-weak $\Sigma $

(ii) $\left\Vert u_{\varepsilon }\right\Vert _{L^{p}\left( Q\right)
}\rightarrow \left\Vert u_{0}\right\Vert _{L^{p}\left( Q;L_{per}^{p}\left(
Y\times \mathcal{T}\times Z\right) \right) }$.

Conversely, if $p=2$ and if assertions (i)-(ii) are satisfied, then $%
u_{\varepsilon }\rightarrow u_{0}$ reiteratively in $L^{p}\left( Q\right) $%
-strong $\Sigma $.
\end{proposition}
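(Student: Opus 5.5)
The plan is a density/approximation argument with the smooth functions $v\in L^{p}(Q;\mathcal{C}_{per}(Y\times\mathcal{T}\times Z))$, which are dense in $L^{p}(Q;L^{p}_{per}(Y\times\mathcal{T}\times Z))$. Two earlier facts do most of the work: Proposition \ref{pr2.3}/Example \ref{ex2.1a} (so that $v^{\varepsilon}\to v$ weakly $\Sigma$), and Remark \ref{rem2.1}, eq. (\ref{eq2.3}), which gives $\|v^{\varepsilon}\|_{L^{p}(Q)}\to\|v\|$. Throughout I write $\|\cdot\|$ for the norm of $L^{p}(Q\times Y\times\mathcal{T}\times Z)$.

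\emph{Step (ii).} Fix $\eta>0$ and choose $v$ with $\|u_{0}-v\|\le\eta/2$. By strong convergence, $\|u_{\varepsilon}-v^{\varepsilon}\|_{L^{p}(Q)}\le\eta$ for small $\varepsilon$. Hence
\[
\bigl|\|u_{\varepsilon}\|_{L^{p}(Q)}-\|u_{0}\|\bigr|\le\eta+\bigl|\|v^{\varepsilon}\|_{L^{p}(Q)}-\|v\|\bigr|+\eta/2 .
\]
The middle term tends to $0$ by (\ref{eq2.3}), and (ii) follows.

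\emph{Step (i).} Take $\psi\in L^{p'}(Q;\mathcal{C}_{per}(Y\times\mathcal{T}\times Z))$. By Proposition \ref{pr2.1}, $\|\psi^{\varepsilon}\|_{L^{p'}(Q)}\le\|\psi\|_{L^{p'}(Q;\mathcal{C}_{per})}$. Split
\[
\int_{Q}u_{\varepsilon}\psi^{\varepsilon}-\int u_{0}\psi=\int_{Q}(u_{\varepsilon}-v^{\varepsilon})\psi^{\varepsilon}+\Bigl(\int_{Q}v^{\varepsilon}\psi^{\varepsilon}-\int v\psi\Bigr)+\int(v-u_{0})\psi .
\]
The first and last terms are $O(\eta)$ by H\"older's inequality. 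For the middle term, note that $v\psi\in L^{1}(Q;\mathcal{C}_{per})$ and $(v\psi)^{\varepsilon}=v^{\varepsilon}\psi^{\varepsilon}$, so Proposition \ref{pr2.3} gives that it tends to $0$. (The trace identity holds on $C(\overline{Q})\otimes\mathcal{C}_{per}$ and extends by continuity.)

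\emph{Converse, $p=2$.} Assume (i) and (ii), and fix $\eta$ and $v$ with $\|u_{0}-v\|\le\eta/2$. Expand
\[
\|u_{\varepsilon}-v^{\varepsilon}\|^{2}_{L^{2}(Q)}=\|u_{\varepsilon}\|^{2}-2\operatorname{Re}\int_{Q}u_{\varepsilon}\overline{v^{\varepsilon}}+\|v^{\varepsilon}\|^{2}.
\]
The three terms converge respectively:
\begin{itemize}
\item the first to $\|u_{0}\|^{2}$, by (ii);
\item the second to $2\operatorname{Re}\int u_{0}\bar v$, by (i) applied with test function $\bar v\in L^{2}(Q;\mathcal{C}_{per})$;
\item the third to $\|v\|^{2}$, by (\ref{eq2.3}).
\end{itemize}
Hence the limit is $\|u_{0}-v\|^{2}\le\eta^{2}/4<\eta^{2}$, so $\|u_{\varepsilon}-v^{\varepsilon}\|\le\eta$ for small $\varepsilon$.

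The main obstacle is the justification of the product step in (i): that $v^{\varepsilon}\psi^{\varepsilon}=(v\psi)^{\varepsilon}$ and that $v\psi$ lies in $L^{1}(Q;\mathcal{C}_{per})$. Both follow from H\"older's inequality in $x,t$ combined with the sup-norm in $(y,\tau,z)$, together with the density of tensor products.
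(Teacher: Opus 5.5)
Your proof is correct. The paper gives no proof of Proposition~\ref{pr2.5}: it states it and defers to the reiterated two-scale convergence literature cited right afterwards. Your density argument is the standard proof found there. It approximates $u_{0}$ by some $v\in L^{p}(Q;\mathcal{C}_{per}(Y\times\mathcal{T}\times Z))$, controls $\|v^{\varepsilon}\|_{L^{p}(Q)}$ through (\ref{eq2.3}), and controls $\int_{Q}v^{\varepsilon}\psi^{\varepsilon}dxdt$ through Proposition~\ref{pr2.3} applied to $v\psi$. For the converse with $p=2$, it expands $\|u_{\varepsilon}-v^{\varepsilon}\|_{L^{2}(Q)}^{2}$ using $\overline{v}$ as test function. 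Using (\ref{eq2.3}) is not circular, since it is just Proposition~\ref{pr2.3} applied to $|v|^{p}\in L^{1}(Q;\mathcal{C}_{per}(Y\times\mathcal{T}\times Z))$.

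One small point to tidy is the case $p=1$. There the test functions $\psi$ lie in $L^{\infty}(Q;\mathcal{C}_{per}(Y\times\mathcal{T}\times Z))$, which tensor products do not approximate in norm, and Proposition~\ref{pr2.1} does not literally cover it. So you should not justify the bound $\|\psi^{\varepsilon}\|_{L^{\infty}(Q)}\leq\|\psi\|_{L^{\infty}(Q;\mathcal{C}_{per}(Y\times\mathcal{T}\times Z))}$ and the identity $(v\psi)^{\varepsilon}=v^{\varepsilon}\psi^{\varepsilon}$ by that density. Read them off instead from the pointwise formula $w^{\varepsilon}(x,t)=w(x,t)(\frac{x}{\varepsilon},\frac{t}{\varepsilon},\frac{x}{\varepsilon^{2}})$ for a strongly measurable representative, which coincides with the operator of Proposition~\ref{pr2.1}. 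Alternatively, use a two-step approximation (first $\psi$ in $L^{2}$, then $v$ in $L^{1}$).
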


\begin{proposition}
\label{pr2.6} Suppose $1\leq p,q<+\infty $ are such that $\frac{1}{r}=\frac{1%
}{p}+\frac{1}{q}\leq 1$. Let $u_{0}\in L^{p}\left( Q;L_{per}^{p}\left(
Y\times \mathcal{T}\times Z\right) \right) $, $v_{0}\in L^{q}\left(
Q;L_{per}^{q}\left( Y\times \mathcal{T}\times Z\right) \right) $, $%
u_{\varepsilon }\in L^{p}\left( Q\right) $ and $v_{\varepsilon }\in
L^{q}\left( Q\right) $ with $\varepsilon \in E$. If $u_{\varepsilon
}\rightarrow u_{0}$ reiteratively in $L^{p}\left( Q\right) $-strong $\Sigma $
and $v_{\varepsilon }\rightarrow v_{0}$ reiteratively in $L^{q}\left(
Q\right) $-weak $\Sigma $, then $u_{\varepsilon }v_{\varepsilon }\rightarrow
u_{0}v_{0}$ reiteratively in $L^{r}\left( Q\right) $-weak $\Sigma $.
\end{proposition}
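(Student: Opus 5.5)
The plan is to test the product against an arbitrary admissible test function and to approximate the strongly converging factor by an oscillating function $v^{\varepsilon }$ built from a smooth profile $v$. Fix $\psi \in L^{r^{\prime }}\left( Q;\mathcal{C}_{per}\left( Y\times \mathcal{T}\times Z\right) \right) $ with $\frac{1}{r^{\prime }}=1-\frac{1}{r}$. We must show
\begin{equation*}
\int_{Q}u_{\varepsilon }v_{\varepsilon }\psi ^{\varepsilon }dxdt\rightarrow \int \int \int \int_{Q\times Y\times \mathcal{T}\times Z}u_{0}v_{0}\psi \,dxdtdydzd\tau \text{.}
\end{equation*}
By density, reduce first to $\psi \in \mathcal{C}\left( \overline{Q};\mathcal{C}_{per}\left( Y\times \mathcal{T}\times Z\right) \right) $. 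This reduction needs $\sup_{\varepsilon }\left\Vert u_{\varepsilon }v_{\varepsilon }\right\Vert _{L^{r}\left( Q\right) }<\infty $, which follows from H\"{o}lder's inequality. Here $\left( u_{\varepsilon }\right) $ is bounded in $L^{p}\left( Q\right) $ by Proposition \ref{pr2.5}(ii). The family $\left( v_{\varepsilon }\right) $ is bounded in $L^{q}\left( Q\right) $ because it converges weakly in $L^{q}\left( Q\right) $ by Proposition \ref{pr2.4}(ii), so uniform boundedness applies. The estimate $\left\Vert \phi ^{\varepsilon }\right\Vert _{L^{r^{\prime }}\left( Q\right) }\leq \left\Vert \phi \right\Vert _{L^{r^{\prime }}\left( Q;\mathcal{C}_{per}\right) }$ then lets us pass from smooth $\psi $ to general $\psi $ uniformly in $\varepsilon $. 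When $r^{\prime }=\infty $, i.e.\ $r=1$, the density argument is replaced by working directly with continuous $\psi $ and the $L^{\infty }$ bound.

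Next, given $\eta >0$, choose $w\in \mathcal{C}\left( \overline{Q};\mathcal{C}_{per}\left( Y\times \mathcal{T}\times Z\right) \right) $ with $\left\Vert u_{0}-w\right\Vert _{L^{p}\left( Q\times Y\times \mathcal{T}\times Z\right) }\leq \eta /2$. This is possible by density of such functions in $L^{p}\left( Q;L_{per}^{p}\right) $. Split
\begin{equation*}
\int_{Q}u_{\varepsilon }v_{\varepsilon }\psi ^{\varepsilon }=\int_{Q}\left( u_{\varepsilon }-w^{\varepsilon }\right) v_{\varepsilon }\psi ^{\varepsilon }+\int_{Q}v_{\varepsilon }\left( w\psi \right) ^{\varepsilon }\text{.}
\end{equation*}
The first term is bounded by $\left\Vert u_{\varepsilon }-w^{\varepsilon }\right\Vert _{L^{p}}\left\Vert v_{\varepsilon }\right\Vert _{L^{q}}\left\Vert \psi \right\Vert _{\infty }\left\vert Q\right\vert ^{1-1/r}$. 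By the definition of strong reiterated convergence this is at most $C\eta $ for $\varepsilon $ small. In the second term $w\psi \in \mathcal{C}\left( \overline{Q};\mathcal{C}_{per}\right) \subset L^{q^{\prime }}\left( Q;\mathcal{C}_{per}\right) $ is an admissible test function for the weak convergence of $v_{\varepsilon }$. Hence it tends to $\int v_{0}w\psi $. Finally,
\begin{equation*}
\left\vert \int v_{0}\left( w-u_{0}\right) \psi \right\vert \leq \left\Vert v_{0}\right\Vert _{L^{q}}\left\Vert w-u_{0}\right\Vert _{L^{p}}\left\Vert \psi \right\Vert _{\infty }\left\vert Q\right\vert ^{1-1/r}\leq C\eta \text{.}
\end{equation*}
Taking $\limsup $ and letting $\eta \rightarrow 0$ concludes.

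I expect the main obstacle to be the uniform bound on $\left\Vert v_{\varepsilon }\right\Vert _{L^{q}}$ together with the density step for general $\psi $. The first is handled by the Banach--Steinhaus argument above when $E$ is a fundamental sequence. For a general $E$, I would first pass to sequences, since convergence along $E$ is equivalent to convergence along every fundamental subsequence. One point to check is that the $L^{p}$ norm on the cell is the one used in the definition, so that the constants in the two H\"{o}lder estimates match.
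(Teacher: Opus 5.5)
The paper gives no proof of Proposition \ref{pr2.6}; it quotes the result from the reiterated two-scale literature it cites. Your splitting $u_\varepsilon v_\varepsilon\psi^\varepsilon=(u_\varepsilon-w^\varepsilon)v_\varepsilon\psi^\varepsilon+v_\varepsilon(w\psi)^\varepsilon$, with $w$ a regular approximation of $u_0$, is the standard argument for it. Your justification of the eventual $L^q(Q)$ bound on $(v_\varepsilon)$ is fine: weak convergence from Proposition \ref{pr2.4}(ii), then uniform boundedness along fundamental sequences. For $r>1$ your argument is complete.

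The gap is the preliminary reduction to $\psi\in\mathcal{C}(\overline{Q};\mathcal{C}_{per}(Y\times\mathcal{T}\times Z))$ when $r=1$. Since $p,q<\infty$ and $\frac{1}{p}+\frac{1}{q}\le 1$ force $p,q>1$, the case $r=1$ is exactly that of conjugate exponents. This is the case the paper uses in Lemma \ref{lem3.3} with $p=q=2$. There Definition \ref{def2.1} requires convergence against every $\psi\in L^{\infty}(Q;\mathcal{C}_{per}(Y\times\mathcal{T}\times Z))$. Continuous functions are not norm-dense in this space, so ``working directly with continuous $\psi$'' only gives convergence on a strictly smaller class of test functions.

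The fix is to drop the reduction altogether. Take an arbitrary $\psi\in L^{r'}(Q;\mathcal{C}_{per}(Y\times\mathcal{T}\times Z))$ with $1<r'\le\infty$. The three-exponent H\"older inequality and $\|\psi^\varepsilon\|_{L^{r'}(Q)}\le\|\psi\|_{L^{r'}(Q;\mathcal{C}_{per})}$ (Proposition \ref{pr2.1}, or directly $|\psi^\varepsilon(x,t)|\le\|\psi(x,t)\|_\infty$) give
\[
\left|\int_Q (u_\varepsilon-w^\varepsilon)v_\varepsilon\psi^\varepsilon\,dxdt\right|\le\|u_\varepsilon-w^\varepsilon\|_{L^p(Q)}\|v_\varepsilon\|_{L^q(Q)}\|\psi\|_{L^{r'}(Q;\mathcal{C}_{per})}.
\]
Next, $\frac{1}{r'}=\frac{1}{q'}-\frac{1}{p}<\frac{1}{q'}$ and $Q$ is bounded, so $w\psi\in L^{r'}(Q;\mathcal{C}_{per})\subset L^{q'}(Q;\mathcal{C}_{per})$. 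Hence $w\psi$ is an admissible test function for the weak convergence of $(v_\varepsilon)$. Finally, $Y\times\mathcal{T}\times Z$ has measure one, so the remaining error term is bounded by $\|v_0\|_{L^q}\|w-u_0\|_{L^p}\|\psi\|_{L^{r'}(Q;\mathcal{C}_{per})}$.

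This treats all $r\ge 1$ at once. It also makes unnecessary both the density step and the $L^r(Q)$ bound on $(u_\varepsilon v_\varepsilon)$; only the bound on $(v_\varepsilon)$ is still needed.
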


For more details on the reiterated two-scale convergence for periodic
structures, we find it more convenient to draw the reader's attention to a
few references, e.g., \cite{bib5}, \cite{bib6} and \cite{bib8}. However, we
recall below two fundamental results. First of all, let 
\begin{equation*}
\mathcal{Y}\left( 0,T\right) =\left\{ v\in L^{2}\left( 0,T;H_{0}^{1}\left(
\Omega ;\mathbb{R}\right) \right) :v^{\prime }\in L^{2}\left(
0,T;H^{-1}\left( \Omega ;\mathbb{R}\right) \right) \right\} \text{.}
\end{equation*}%
$\mathcal{Y}\left( 0,T\right) $ is provided with the norm 
\begin{equation*}
\left\Vert v\right\Vert _{\mathcal{Y}\left( 0,T\right) }=\left( \left\Vert
v\right\Vert _{L^{2}\left( 0,T;H_{0}^{1}\left( \Omega \right) \right)
}^{2}+\left\Vert v^{\prime }\right\Vert _{L^{2}\left( 0,T;H^{-1}\left(
\Omega \right) \right) }^{2}\right) ^{\frac{1}{2}}\qquad \left( v\in 
\mathcal{Y}\left( 0,T\right) \right)
\end{equation*}%
which makes it a Hilbert space.

\begin{theorem}
\label{th2.1} Assume that $1<p<\infty $ and further $E$ is a fundamental
sequence. Let a sequence $\left( u_{\varepsilon }\right) _{\varepsilon \in
E} $ be bounded in $L^{p}\left( Q\right) $. Then, a subsequence $E^{\prime }$
can be extracted from $E$ such that $\left( u_{\varepsilon }\right)
_{\varepsilon \in E^{\prime }}$ weakly two-scale converges reiteratively in $%
L^{p}\left( Q\right) $.
\end{theorem}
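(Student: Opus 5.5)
We prove Theorem \ref{th2.1} by the classical compactness argument of Nguetseng and Allaire, adapted to the reiterated space-time setting: one sees each $u_{\varepsilon}$ as a bounded linear functional on the separable Banach space $X=L^{p^{\prime}}\left( Q;\mathcal{C}_{per}\left( Y\times \mathcal{T}\times Z\right) \right)$ and extracts a weak$\ast$ convergent subsequence. For $\psi \in X$ set
\begin{equation*}
L_{\varepsilon }\left( \psi \right) =\int_{Q}u_{\varepsilon }\left( x,t\right) \psi ^{\varepsilon }\left( x,t\right) dxdt\text{,}
\end{equation*}
with $\psi ^{\varepsilon }$ the trace of Proposition \ref{pr2.1} (taking $B=\mathcal{C}_{per}\left( Y\times \mathcal{T}\times Z\right)$, a closed subspace of $\mathcal{B}\left( \mathbb{R}_{y}^{N}\times \mathbb{R}_{\tau }\times \mathbb{R}_{z}^{N}\right)$). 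By H\"{o}lder's inequality and Proposition \ref{pr2.1},
\begin{equation*}
\left\vert L_{\varepsilon }\left( \psi \right) \right\vert \leq \left\Vert u_{\varepsilon }\right\Vert _{L^{p}\left( Q\right) }\left\Vert \psi ^{\varepsilon }\right\Vert _{L^{p^{\prime }}\left( Q\right) }\leq c\left\Vert \psi \right\Vert _{X}\text{,}
\end{equation*}
where $c=\sup_{\varepsilon}\left\Vert u_{\varepsilon }\right\Vert _{L^{p}\left( Q\right)}$. So $(L_{\varepsilon})_{\varepsilon \in E}$ is bounded in $X^{\prime}$.

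Next we extract the subsequence. Since $Q$ is a bounded open set, $p^{\prime}<\infty$, and $\mathcal{C}_{per}\left( Y\times \mathcal{T}\times Z\right) \cong \mathcal{C}(\mathbb{T}^{2N+1})$ is separable, the space $X$ is separable. Because $E$ is a fundamental sequence, a diagonal argument over a countable dense subset of $X$ gives $E^{\prime}\subset E$ and $L\in X^{\prime}$ with $L_{\varepsilon}(\psi)\to L(\psi)$ for all $\psi\in X$ as $E^{\prime}\ni\varepsilon\to 0$.

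The remaining step is to represent $L$ by some $u_0\in L^{p}\left( Q;L_{per}^{p}\left( Y\times \mathcal{T}\times Z\right) \right)$, and I expect this to be the main point. The plan is to improve the bound on $L$. Applying H\"{o}lder with the $L^{p^{\prime}}(Q)$ norm of $\psi^{\varepsilon}$, and then Remark \ref{rem2.1} (equation (\ref{eq2.3}) with $p^{\prime}$ in place of $p$), we get
\begin{equation*}
\left\vert L\left( \psi \right) \right\vert \leq c\lim_{\varepsilon}\left\Vert \psi ^{\varepsilon }\right\Vert _{L^{p^{\prime }}\left( Q\right) }=c\left\Vert \psi \right\Vert _{L^{p^{\prime }}\left( Q\times Y\times \mathcal{T}\times Z\right) }\text{.}
\end{equation*}
Hence $L$ is continuous for the $L^{p^{\prime}}(Q\times Y\times\mathcal{T}\times Z)$ norm. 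The space $X$ is dense in $L^{p^{\prime}}\left( Q;L_{per}^{p^{\prime}}\left( Y\times \mathcal{T}\times Z\right) \right)$, since it contains $\mathcal{K}(Q)\otimes\mathcal{C}_{per}$, which is dense because continuous functions are dense in $L^{p^{\prime}}$ of the torus. So $L$ extends uniquely to a continuous functional on $L^{p^{\prime}}(Q\times Y\times\mathcal{T}\times Z)$, identifying periodic functions with functions on the cell. Since $1<p^{\prime}<\infty$, the Riesz representation theorem for $L^{p}$ duality gives $u_{0}\in L^{p}(Q\times Y\times\mathcal{T}\times Z)$, which we extend periodically, with
\begin{equation*}
L(\psi)=\int\int\int\int_{Q\times Y\times\mathcal{T}\times Z}u_{0}\psi\, dxdtdyd\tau dz\text{.}
\end{equation*}
This is exactly (\ref{eq2.1}) along $E^{\prime}$.

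The only delicate ingredient is the norm limit (\ref{eq2.3}) for test functions in $L^{p^{\prime}}(Q;\mathcal{C}_{per})$. To make it self-contained, I would apply Proposition \ref{pr2.3} to $|\psi|^{p^{\prime}}$, which lies in $L^{1}(Q;\mathcal{C}_{per})$ and satisfies $(|\psi|^{p^{\prime}})^{\varepsilon}=|\psi^{\varepsilon}|^{p^{\prime}}$. Weak $L^{1}$ convergence tested against the constant function $1$ then gives the convergence of the integrals, which is exactly the norm limit.
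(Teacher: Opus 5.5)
Your proposal is correct and follows the standard Nguetseng--Allaire compactness scheme. You bound the functionals $\psi\mapsto\int_Q u_\varepsilon\psi^\varepsilon\,dxdt$ uniformly on the separable space $L^{p^{\prime}}(Q;\mathcal{C}_{per}(Y\times\mathcal{T}\times Z))$ and extract a weak$\ast$ limit by a diagonal argument. You then sharpen the bound through the norm limit (\ref{eq2.3}), which you derive cleanly from Proposition \ref{pr2.3} applied to $|\psi|^{p^{\prime}}$, and finish with Riesz representation in $L^{p}(Q\times Y\times\mathcal{T}\times Z)$. The paper gives no proof of Theorem \ref{th2.1}; it only cites the literature, where the result is proved by essentially this argument.
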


\begin{theorem}
\label{th2.2} Let $E$ be a fundamental sequence. Suppose a sequence $\left(
u_{\varepsilon }\right) _{\varepsilon \in E}$ is bounded in $\mathcal{Y}%
\left( 0,T\right) $. Then, a subsequence $E^{\prime }$ can be extracted from 
$E$ such that, as $E^{\prime }\ni \varepsilon \rightarrow 0$, 
\begin{equation*}
u_{\varepsilon }\rightarrow u_{0}\text{ in }\mathcal{Y}\left( 0,T\right) 
\text{-weak,\qquad \qquad \qquad \qquad \qquad \qquad \qquad }
\end{equation*}%
\begin{equation*}
u_{\varepsilon }\rightarrow u_{0}\text{ reiteratively in }L^{2}\left(
Q\right) \text{-weak }\Sigma \text{,\qquad \qquad \qquad \qquad }
\end{equation*}%
\begin{equation*}
\frac{\partial u_{\varepsilon }}{\partial x_{j}}\rightarrow \frac{\partial
u_{0}}{\partial x_{j}}+\frac{\partial u_{1}}{\partial y_{j}}+\frac{\partial
u_{2}}{\partial z_{j}}\text{ reiteratively in }L^{2}\left( Q\right) \text{%
-weak }\Sigma \text{ }\left( 1\leq j\leq N\right) \text{,}
\end{equation*}%
where $u_{0}\in \mathcal{Y}\left( 0,T\right) $, $u_{1}\in L^{2}\left(
Q;L_{per}^{2}\left( \mathcal{T};H_{\#}^{1}\left( Y\right) \right) \right) $
and $u_{2}\in L^{2}\left( Q;L_{per}^{2}\left( \mathcal{T};L_{per}^{2}\left(
Y;H_{\#}^{1}\left( Z\right) \right) \right) \right) $.
\end{theorem}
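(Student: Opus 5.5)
By Theorem \ref{th2.1}, we extract a subsequence along which $u_\varepsilon$ and each $\partial u_\varepsilon/\partial x_j$ weakly two-scale converge reiteratively in $L^2(Q)$. The bulk of the work is then to identify the limits, which we do in five steps.

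\emph{Step 1 (compactness).} Since $(u_\varepsilon)$ is bounded in the Hilbert space $\mathcal{Y}(0,T)$, some subsequence converges weakly in $\mathcal{Y}(0,T)$ to some $u_0$. Because $\mathcal{Y}(0,T)$ embeds compactly in $L^2(Q)$ (Aubin--Lions), the convergence $u_\varepsilon\to u_0$ is strong in $L^2(Q)$. By Example \ref{ex2.1b}(iii) and Proposition \ref{pr2.5}(i), this gives $u_\varepsilon\to u_0$ reiteratively in $L^2(Q)$-weak $\Sigma$, with a limit independent of $(y,\tau,z)$.

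\emph{Step 2 (extraction of gradient limits).} The derivatives $\partial u_\varepsilon/\partial x_j$ are bounded in $L^2(Q)$. By Theorem \ref{th2.1} and a diagonal extraction, we obtain limits $w_j\in L^2(Q;L^2_{per}(Y\times\mathcal{T}\times Z))$.

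\emph{Step 3 (the $z$-structure).} Take test functions $\varepsilon^2\psi^\varepsilon$ with $\psi\in\mathcal{D}(Q)\otimes\mathcal{C}^\infty_{per}(Y\times\mathcal{T}\times Z)$ and $\sum_j \partial\psi_j/\partial z_j=0$. Integrating by parts gives
\[
\sum_j\int_Q \frac{\partial u_\varepsilon}{\partial x_j}\,\varepsilon^2\psi_j^\varepsilon
=-\int_Q u_\varepsilon\Big(\varepsilon^2(\operatorname{div}_x\psi)^\varepsilon+\varepsilon(\operatorname{div}_y\psi)^\varepsilon+(\operatorname{div}_z\psi)^\varepsilon\Big),
\]
and the last term vanishes by the constraint. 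Letting $\varepsilon\to0$ yields nothing new, so we instead test with $\varepsilon\psi^\varepsilon$ for arbitrary $\psi$. This shows that $w-\int_Z w\,dz$ is orthogonal to $z$-divergence-free periodic fields. By the standard periodic de Rham/Helmholtz argument in $L^2_{per}(Z)^N$, we conclude that $w=w^\#+\nabla_z u_2$ with $u_2\in L^2(Q;L^2_{per}(\mathcal{T};L^2_{per}(Y;H^1_\#(Z))))$.

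\emph{Step 4 (the $y$-structure).} Next, choose $\psi$ independent of $z$ with $\operatorname{div}_y\psi=0$, and test with $\varepsilon\psi^\varepsilon$. Using Proposition \ref{pr2.4}(i) and the fact that $\int_Z\nabla_z u_2\,dz=0$, we get that $w^\#-\nabla u_0$ is orthogonal to $y$-divergence-free fields. This yields $w^\#=\nabla u_0+\nabla_y u_1$ with $u_1\in L^2(Q;L^2_{per}(\mathcal{T};H^1_\#(Y)))$.

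\emph{Step 5 (identification of $\int w$).} Finally, we show $\iiint w\,dy\,d\tau\,dz=\nabla u_0$. This follows by testing with $\psi\in\mathcal{D}(Q)^N$ and passing to the limit using Step 1.

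The main obstacle is the time variable $\tau=t/\varepsilon$. Since the test functions oscillate in $t$, terms like $\int u_\varepsilon\,\partial_t(\psi^\varepsilon)$ produce $\varepsilon^{-1}(\partial_\tau\psi)^\varepsilon$. We avoid these entirely by integrating by parts only in $x$, since no time derivative appears in the gradient identities; $\tau$ then just acts as a parameter. This is why $u_1$ and $u_2$ depend on $\tau$ only in $L^2_{per}(\mathcal{T})$. The measurability and parametrised de Rham step must therefore be carried out uniformly in $(x,t,\tau)$ (and in $y$ for Step 3). We do this by working in the Hilbert space $L^2(Q\times\mathcal{T};\cdot)$ and applying the orthogonal decomposition there directly.
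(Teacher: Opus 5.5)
There is a genuine gap in Step 3, which is the core of the theorem. Testing with $\varepsilon\psi^\varepsilon$ can never give information on $w$. The sequences $(\partial u_\varepsilon/\partial x_j)$ and $(\psi_j^\varepsilon)$ are bounded in $L^2(Q)$, so the left-hand side $\varepsilon\sum_j\int_Q(\partial u_\varepsilon/\partial x_j)\psi_j^\varepsilon$ is $O(\varepsilon)$ and tends to $0$ whatever $w$ is. Meanwhile, for arbitrary $\psi$ the right-hand side contains the uncontrolled term $\varepsilon^{-1}\int_Q u_\varepsilon(\mathrm{div}_z\psi)^\varepsilon$. The limiting identity therefore involves only $u_\varepsilon$, and the claimed orthogonality of $w-\int_Z w\,dz$ to $z$-solenoidal fields does not follow.

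Step 4 has the same scaling slip: with $\varepsilon\psi^\varepsilon$, $\mathrm{div}_y\psi=0$ and $\psi$ independent of $z$, both sides are $O(\varepsilon)$. There it is easily repaired. Testing with $\psi^\varepsilon$ gives $\int_Q\nabla u_\varepsilon\cdot\psi^\varepsilon=-\int_Q u_\varepsilon(\mathrm{div}_x\psi)^\varepsilon$ with no singular term, and Proposition \ref{pr2.4}(i) handles the left side. The paper itself only cites the literature for this theorem. Note also that the real obstacle is not $\tau$, which is indeed a mere parameter, but the cross-scale term described next.

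What Step 3 needs is an order-one test with $\mathrm{div}_z\psi=0$, which produces
\[
\int_Q\nabla u_\varepsilon\cdot\psi^\varepsilon\,dx\,dt=-\int_Q u_\varepsilon(\mathrm{div}_x\psi)^\varepsilon\,dx\,dt-\frac{1}{\varepsilon}\int_Q u_\varepsilon(\mathrm{div}_y\psi)^\varepsilon\,dx\,dt ,
\]
and you must show the singular last term vanishes.

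Since $w-w^{\#}$ has zero $z$-mean, it suffices to assume also $\int_Z\psi\,dz=0$. Then $\mathrm{div}_y\psi$ has zero $z$-mean, so $\mathrm{div}_y\psi=\mathrm{div}_z\Theta$ for a smooth periodic $\Theta$ (e.g.\ $\Theta=\nabla_z\theta$ with $\Delta_z\theta=\mathrm{div}_y\psi$). A second integration by parts in $x$ gives
\[
\frac{1}{\varepsilon}\int_Q u_\varepsilon(\mathrm{div}_z\Theta)^\varepsilon=-\varepsilon\int_Q\nabla u_\varepsilon\cdot\Theta^\varepsilon-\varepsilon\int_Q u_\varepsilon(\mathrm{div}_x\Theta)^\varepsilon-\int_Q u_\varepsilon(\mathrm{div}_y\Theta)^\varepsilon\longrightarrow 0 .
\]
The last term vanishes in the limit because $u_\varepsilon\to u_0$ strongly in $L^2(Q)$ and $\mathrm{div}_y\Theta$ has zero mean over $Y$. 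Also $\int_Q u_\varepsilon(\mathrm{div}_x\psi)^\varepsilon\to\int_Q u_0\,\mathrm{div}_x\bigl(\int_{Y\times\mathcal{T}\times Z}\psi\bigr)=0$. Together these yield $\int_{Q\times Y\times\mathcal{T}\times Z}w\cdot\psi=0$ for all such $\psi$. After a density step, $w-w^{\#}$ is orthogonal to all $z$-solenoidal fields, and only then does your periodic de Rham argument give $w=w^{\#}+\nabla_z u_2$. For $N=2$ you can equivalently write $\psi=\nabla_z^{\perp}\phi$ with $\phi$ periodic and integrate by parts.
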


The proofs of Theorem \ref{th2.1} and Theorem \ref{th2.2} can be found in,
e.g., \cite{bib8} and \cite{bib13}, respectively.

\begin{theorem}
\label{th2.3} Let $\left( u_{\varepsilon }\right) _{\varepsilon >0}$ be a
sequence in $L^{2}\left( 0,T;H^{1}\left( \Omega \right) \right) $ such that
there are three functions $u_{0}\in L^{2}\left( 0,T;H^{1}\left( \Omega
\right) \right) $, $u_{1}\in L^{2}\left( Q;L_{per}^{2}\left( \mathcal{T}%
;H_{\#}^{1}\left( Y\right) \right) \right) $ and $u_{2}\in L^{2}\left(
Q;L_{per}^{2}\left( \mathcal{T};L_{per}^{2}\left( Y;H_{\#}^{1}\left(
Z\right) \right) \right) \right) $ verifying 
\begin{equation}
\frac{\partial u_{\varepsilon }}{\partial x_{j}}\rightarrow \frac{\partial
u_{0}}{\partial x_{j}}+\frac{\partial u_{1}}{\partial y_{j}}+\frac{\partial
u_{2}}{\partial z_{j}}\text{ reiteratively in }L^{2}\left( Q\right) \text{%
-strong }\Sigma \text{ }\left( 1\leq j\leq N\right)  \label{eq2.4}
\end{equation}%
as $\varepsilon \rightarrow 0$. Let $\eta >0$. By the density of $%
L^{2}\left( 0,T;H^{1}\left( \Omega \right) \right) \otimes \mathcal{C}%
_{per}\left( \mathcal{T};\mathcal{C}_{per}^{1}\left( Y\right) \mathfrak{/}%
\mathbb{C}\right) $ in $L^{2}\left( Q;L_{per}^{2}\left( \mathcal{T}%
;H_{\#}^{1}\left( Y\right) \right) \right) $ ($\mathcal{C}_{per}^{1}\left(
Y\right) \mathfrak{/}\mathbb{C=}\left\{ w\in \mathcal{C}_{per}^{1}\left(
Y\right) :\int_{Y}w\left( y\right) dy=0\right\} $ and $\mathcal{C}%
_{per}^{1}\left( Y\right) =\mathcal{C}_{per}\left( Y\right) \cap \mathcal{C}%
^{1}\left( \mathbb{R}_{y}^{N}\right) $) on one hand, and that of $%
L^{2}\left( 0,T;H^{1}\left( \Omega \right) \right) \otimes \mathcal{C}%
_{per}\left( \mathcal{T};\mathcal{C}_{per}^{1}\left( Y;\mathcal{C}%
_{per}^{1}\left( Z\right) \mathfrak{/}\mathbb{C}\right) \right) $

in $L^{2}\left( Q;L_{per}^{2}\left( \mathcal{T};L_{per}^{2}\left(
Y;H_{\#}^{1}\left( Z\right) \right) \right) \right) $ on the other hand ($%
\mathcal{C}_{per}^{1}\left( Z\right) \mathfrak{/}\mathbb{C=}\left\{ w\in 
\mathcal{C}_{per}^{1}\left( Z\right) :\int_{Z}w\left( z\right) dz=0\right\} $
and $\mathcal{C}_{per}^{1}\left( Z\right) =\mathcal{C}_{per}\left( Z\right)
\cap \mathcal{C}^{1}\left( \mathbb{R}_{y}^{N}\right) $), fix some $\psi
_{1}\in L^{2}\left( 0,T;H^{1}\left( \Omega \right) \right) \otimes \mathcal{C%
}_{per}\left( \mathcal{T};\mathcal{C}_{per}^{1}\left( Y\right) \mathfrak{/}%
\mathbb{C}\right) $ and $\psi _{2}\in L^{2}\left( 0,T;H^{1}\left( \Omega
\right) \right) \otimes \mathcal{C}_{per}\left( \mathcal{T};\mathcal{C}%
_{per}^{1}\left( Y;\mathcal{C}_{per}^{1}\left( Z\right) \mathfrak{/}\mathbb{C%
}\right) \right) $ such that 
\begin{equation*}
\left\Vert u_{1}-\psi _{1}\right\Vert _{L^{2}\left( Q;L_{per}^{2}\left( 
\mathcal{T};H_{\#}^{1}\left( Y\right) \right) \right) }\leq \frac{\eta }{8}%
\text{ and }\left\Vert u_{2}-\psi _{2}\right\Vert _{L^{2}\left(
Q;L_{per}^{2}\left( \mathcal{T};L_{per}^{2}\left( Y;H_{\#}^{1}\left(
Z\right) \right) \right) \right) }\leq \frac{\eta }{8}\text{.}
\end{equation*}%
Then there is some $\varepsilon _{0}>0$ such that 
\begin{equation*}
\left\Vert \frac{\partial }{\partial x_{j}}\left( u_{\varepsilon
}-u_{0}-\varepsilon \psi _{1}^{\varepsilon }-\varepsilon ^{2}\psi
_{2}^{\varepsilon }\right) \right\Vert _{L^{2}\left( Q\right) }\leq \eta 
\text{\qquad }\left( 1\leq j\leq N\right)
\end{equation*}%
for all $0<\varepsilon \leq \varepsilon _{0}$.
\end{theorem}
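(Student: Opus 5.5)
We expand the derivative of the corrector. Since $\psi_1$ and $\psi_2$ are finite sums of tensor products, the chain rule gives
\begin{equation*}
\frac{\partial}{\partial x_j}\left(\varepsilon \psi_1^{\varepsilon}\right)=\varepsilon\left(\frac{\partial \psi_1}{\partial x_j}\right)^{\varepsilon}+\left(\frac{\partial \psi_1}{\partial y_j}\right)^{\varepsilon}
\end{equation*}
and
\begin{equation*}
\frac{\partial}{\partial x_j}\left(\varepsilon^{2} \psi_2^{\varepsilon}\right)=\varepsilon^{2}\left(\frac{\partial \psi_2}{\partial x_j}\right)^{\varepsilon}+\varepsilon\left(\frac{\partial \psi_2}{\partial y_j}\right)^{\varepsilon}+\left(\frac{\partial \psi_2}{\partial z_j}\right)^{\varepsilon}.
\end{equation*}
These formulas hold because the variables $\tau=t/\varepsilon$ are not differentiated in $x$. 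Hence
\begin{equation*}
\frac{\partial}{\partial x_j}\left(u_\varepsilon-u_0-\varepsilon\psi_1^\varepsilon-\varepsilon^2\psi_2^\varepsilon\right)=\left[\frac{\partial u_\varepsilon}{\partial x_j}-\Phi_j^\varepsilon\right]-R_j^\varepsilon,
\end{equation*}
where
\begin{equation*}
\Phi_j=\frac{\partial u_0}{\partial x_j}+\frac{\partial \psi_1}{\partial y_j}+\frac{\partial \psi_2}{\partial z_j}
\end{equation*}
and
\begin{equation*}
R_j^\varepsilon=\varepsilon\left(\frac{\partial \psi_1}{\partial x_j}\right)^{\varepsilon}+\varepsilon^{2}\left(\frac{\partial \psi_2}{\partial x_j}\right)^{\varepsilon}+\varepsilon\left(\frac{\partial \psi_2}{\partial y_j}\right)^{\varepsilon}.
\end{equation*}

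First, $R_j^\varepsilon\to0$ in $L^2(Q)$. Each term is of the form $\varepsilon^k g^\varepsilon$ with $g$ a finite sum $\sum \varphi_i\otimes\chi_i$, where $\varphi_i\in L^2(Q)$ (an $x$-derivative of an $L^2(0,T;H^1)$ function, or the function itself) and $\chi_i$ is continuous and bounded. Then $\|g^\varepsilon\|_{L^2(Q)}\le \sum\|\varphi_i\|_{L^2}\|\chi_i\|_\infty$, uniformly in $\varepsilon$, as in Proposition \ref{pr2.1}. So this term is harmless, and for $\varepsilon$ small it is at most $\eta/4$.

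Second, $\Phi_j$ belongs to $L^2(Q;\mathcal{C}_{per}(Y\times\mathcal{T}\times Z))$, at least after approximating $\partial u_0/\partial x_j$ (which is independent of $y,\tau,z$, so that is trivial). The hypothesis (\ref{eq2.4}) says that $\partial u_\varepsilon/\partial x_j$ converges reiteratively in $L^2(Q)$-strong $\Sigma$ to
\begin{equation*}
w_j=\frac{\partial u_0}{\partial x_j}+\frac{\partial u_1}{\partial y_j}+\frac{\partial u_2}{\partial z_j}.
\end{equation*}
We estimate
\begin{equation*}
\left\|w_j-\Phi_j\right\|_{L^2(Q\times Y\times\mathcal{T}\times Z)}\le \left\|\nabla_y(u_1-\psi_1)\right\|+\left\|\nabla_z(u_2-\psi_2)\right\|\le \frac{\eta}{4},
\end{equation*}
using that the $H^1_\#$ norms are gradient norms and the choice of $\psi_1,\psi_2$. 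Note that the space is $Y\times\mathcal{T}$ for $\psi_1$, and that the $Z$-average is taken over a set of unit measure.

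Now apply Definition \ref{def2.1}(ii) with $v=\Phi_j$ and with $\eta/2$ in place of $\eta$; this is allowed since $\|w_j-\Phi_j\|\le (\eta/2)/2$. It yields $\varepsilon_0$ such that $\|\partial u_\varepsilon/\partial x_j-\Phi_j^\varepsilon\|_{L^2(Q)}\le\eta/2$ for $\varepsilon\le\varepsilon_0$. Combining this with the bound on $R_j^\varepsilon$, after shrinking $\varepsilon_0$ and taking the minimum over the finitely many $j$, gives the total bound $\eta/2+\eta/4\le\eta$.

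The main obstacle is the technical point of checking that the differentiation formulas and the trace operators make sense for these tensor-product test functions. A related point is that $\Phi_j$ has only $L^2(Q)$ regularity in $(x,t)$ but continuous periodic dependence on $(y,\tau,z)$, so that Definition \ref{def2.1}(ii) legitimately applies with $v=\Phi_j$. Both follow from the tensor-product structure together with Propositions \ref{pr2.1} and \ref{pr2.2}.
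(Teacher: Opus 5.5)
Your proposal is correct and is essentially the paper's proof. You approximate the strong limit $\frac{\partial u_0}{\partial x_j}+\frac{\partial u_1}{\partial y_j}+\frac{\partial u_2}{\partial z_j}$ within $\eta/4$ by $\frac{\partial u_0}{\partial x_j}+\frac{\partial \psi_1}{\partial y_j}+\frac{\partial \psi_2}{\partial z_j}$, apply Definition \ref{def2.1}(ii) to get the $\eta/2$ bound, and absorb the chain-rule remainder terms of order $\varepsilon$ and $\varepsilon^2$ via uniform $L^2(Q)$ bounds on the traces, exactly as the paper does with its constant $c$ and $\varepsilon_0=\min(1,\varepsilon_1,\eta/(6c))$ (your explicit minimum over the finitely many $j$ is a small point of care the paper leaves implicit).
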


\begin{proof}
Let $1\leq j\leq N$. We have 
\begin{equation*}
\left\Vert \left( \frac{\partial u_{1}}{\partial y_{j}}+\frac{\partial u_{2}%
}{\partial z_{j}}\right) -\left( \frac{\partial \psi _{1}}{\partial y_{j}}+%
\frac{\partial \psi _{2}}{\partial z_{j}}\right) \right\Vert _{L^{2}\left(
Q\times Y\times \mathcal{T\times }Z\right) }\leq \left\Vert u_{1}-\psi
_{1}\right\Vert _{L^{2}\left( Q;L_{per}^{2}\left( \mathcal{T}%
;H_{\#}^{1}\left( Y\right) \right) \right) }
\end{equation*}%
\begin{equation*}
\quad +\left\Vert u_{2}-\psi _{2}\right\Vert _{L^{2}\left(
Q;L_{per}^{2}\left( \mathcal{T};L_{per}^{2}\left( Y;H_{\#}^{1}\left(
Z\right) \right) \right) \right) }\leq \frac{\eta }{4}\text{.}
\end{equation*}%
Thus, in view of (\ref{eq2.4}) (see also Definition \ref{def2.1}), there is
some $\varepsilon _{1}>0$ such that 
\begin{equation*}
\left\Vert \frac{\partial u_{\varepsilon }}{\partial x_{j}}-\frac{\partial
u_{0}}{\partial x_{j}}-\left( \frac{\partial \psi _{1}}{\partial y_{j}}%
\right) ^{\varepsilon }-\left( \frac{\partial \psi _{2}}{\partial z_{j}}%
\right) ^{\varepsilon }\right\Vert _{L^{2}\left( Q\right) }\leq \frac{\eta }{%
2}
\end{equation*}%
for all $0<\varepsilon \leq \varepsilon _{1}$. But $\left( \frac{\partial
\psi _{1}}{\partial y_{j}}\right) ^{\varepsilon }=\varepsilon \frac{\partial
\psi _{1}^{\varepsilon }}{\partial x_{j}}-\varepsilon \left( \frac{\partial
\psi _{1}}{\partial x_{j}}\right) ^{\varepsilon }$ and $\left( \frac{%
\partial \psi _{2}}{\partial z_{j}}\right) ^{\varepsilon }=\varepsilon ^{2}%
\frac{\partial \psi _{2}^{\varepsilon }}{\partial x_{j}}-\varepsilon
^{2}\left( \frac{\partial \psi _{2}}{\partial x_{j}}\right) ^{\varepsilon
}-\varepsilon \left( \frac{\partial \psi _{2}}{\partial y_{j}}\right)
^{\varepsilon }$. Therefore, by the inequalities 
\begin{equation*}
\left\Vert \left( \frac{\partial \psi _{1}}{\partial x_{j}}\right)
^{\varepsilon }\right\Vert _{L^{2}\left( Q\right) }\leq \left\Vert \frac{%
\partial \psi _{1}}{\partial x_{j}}\right\Vert _{L^{2}\left( Q;\mathcal{B}%
\left( \mathbb{R}^{N+1}\right) \right) }\text{, }\left\Vert \left( \frac{%
\partial \psi _{2}}{\partial x_{j}}\right) ^{\varepsilon }\right\Vert
_{L^{2}\left( Q\right) }\leq \left\Vert \frac{\partial \psi _{2}}{\partial
x_{j}}\right\Vert _{L^{2}\left( Q;\mathcal{B}\left( \mathbb{R}^{N}\times 
\mathbb{R}\times \mathbb{R}^{N}\right) \right) }
\end{equation*}%
and 
\begin{equation*}
\left\Vert \left( \frac{\partial \psi _{2}}{\partial y_{j}}\right)
^{\varepsilon }\right\Vert _{L^{2}\left( Q\right) }\leq \left\Vert \frac{%
\partial \psi _{2}}{\partial y_{j}}\right\Vert _{L^{2}\left( Q;\mathcal{B}%
\left( \mathbb{R}^{N}\times \mathbb{R}\times \mathbb{R}^{N}\right) \right) }
\end{equation*}%
one quickly arrives at 
\begin{equation*}
\left\Vert \frac{\partial }{\partial x_{j}}\left( u_{\varepsilon
}-u_{0}-\varepsilon \psi _{1}^{\varepsilon }-\varepsilon ^{2}\psi
_{2}^{\varepsilon }\right) \right\Vert _{L^{2}\left( Q\right) }\leq \frac{%
\eta }{2}+c\left( 2\varepsilon +\varepsilon ^{2}\right) \text{\qquad }\left(
0<\varepsilon \leq \varepsilon _{1}\right) \text{,}
\end{equation*}%
where $c$ is a constant with 
\begin{equation*}
c>\max_{1\leq j\leq N}\left( \left\Vert \frac{\partial \psi _{1}}{\partial
x_{j}}\right\Vert _{L^{2}\left( Q;\mathcal{B}\left( \mathbb{R}^{N}\right)
\right) }+\left\Vert \frac{\partial \psi _{2}}{\partial x_{j}}\right\Vert
_{L^{2}\left( \Omega ;\mathcal{B}\left( \mathbb{R}^{N}\times \mathbb{R}%
\times \mathbb{R}^{N}\right) \right) }+\left\Vert \frac{\partial \psi _{2}}{%
\partial y_{j}}\right\Vert _{L^{2}\left( \Omega ;\mathcal{B}\left( \mathbb{R}%
^{N}\times \mathbb{R}\times \mathbb{R}^{N}\right) \right) }\right) \text{.}
\end{equation*}%
Since $\varepsilon $ goes to zero, we look for $\varepsilon _{0}\leq 1$.
Thus, the theorem follows with $\varepsilon _{0}=\min \left( 1,\varepsilon
_{1},\frac{\eta }{6c}\right) $.
\end{proof}

\section{Homogenization of problem (\protect\ref{eq1.2a})-(\protect\ref%
{eq1.2d})}

\subsection{Preliminaries}

Let us notice that the $a_{ij}$ can be consider as functions defined on $%
\mathbb{R}_{y}^{N}\times \mathbb{R}_{\tau }\times \mathbb{R}_{z}^{N}$ that
thus not depend on the variable $\tau $.

We set%
\begin{equation*}
\begin{array}{c}
\mathbb{E}_{0}^{1}=L^{2}\left( 0,T;H_{0}^{1}\left( \Omega ;\mathbb{R}\right)
^{N}\right) \times L^{2}\left( Q;L_{per}^{2}\left( \mathcal{T}%
;H_{\#}^{1}\left( Y;\mathbb{R}\right) ^{N}\right) \right) \\ 
\times L^{2}\left( Q;L_{per}^{2}\left( \mathcal{T};L_{per}^{2}\left(
Y;H_{\#}^{1}\left( Z;\mathbb{R}\right) ^{N}\right) \right) \right) \text{.}%
\end{array}%
\end{equation*}%
$\mathbb{E}_{0}^{1}$ is the space of vector functions $\mathbf{v}=\left( 
\mathbf{v}_{0},\mathbf{v}_{1},\mathbf{v}_{2}\right) $ with $\mathbf{v}%
_{0}=\left( v_{0}^{j}\right) \in L^{2}\left( 0,T;H_{0}^{1}\left( \Omega ;%
\mathbb{R}\right) ^{N}\right) $, $\mathbf{v}_{1}=\left( v_{1}^{j}\right) \in
L^{2}\left( Q;L_{per}^{2}\left( \mathcal{T};H_{\#}^{1}\left( Y;\mathbb{R}%
\right) ^{N}\right) \right) $ and

$\mathbf{v}_{2}=\left( v_{2}^{j}\right) \in L^{2}\left( Q;L_{per}^{2}\left( 
\mathcal{T};L_{per}^{2}\left( Y;H_{\#}^{1}\left( Z;\mathbb{R}\right)
^{N}\right) \right) \right) $. Provided with the norm 
\begin{eqnarray*}
\left\Vert \mathbf{v}\right\Vert _{\mathbb{E}_{0}^{1}} &=&\left(
\sum_{i,j=1}^{N}\left( \left\Vert \frac{\partial v_{0}^{j}}{\partial x_{i}}%
\right\Vert _{L^{2}\left( Q\right) }^{2}+\left\Vert \frac{\partial v_{1}^{j}%
}{\partial y_{i}}\right\Vert _{L^{2}\left( Q;L_{per}^{2}\left( Y\times 
\mathcal{T}\right) \right) }^{2}+\left\Vert \frac{\partial v_{2}^{j}}{%
\partial z_{i}}\right\Vert _{L^{2}\left( Q;L_{per}^{2}\left( Y\times 
\mathcal{T\times }Z\right) \right) }^{2}\right) \right) ^{\frac{1}{2}} \\
&&\left( \mathbf{v=}\left( \mathbf{v}_{0},\mathbf{v}_{1},\mathbf{v}%
_{2}\right) \in \mathbb{E}_{0}^{1}\right) \text{,}
\end{eqnarray*}%
$\mathbb{E}_{0}^{1}$ is a Hilbert space. Moreover, we have the following
lemma:

\begin{lemma}
\label{lem3.1} The vector space 
\begin{eqnarray*}
E_{0}^{\infty } &=&\mathcal{D}\left( Q;\mathbb{R}\right) ^{N}\times \left[ 
\mathcal{D}\left( Q;\mathbb{R}\right) \otimes \left( \mathcal{C}%
_{per}^{\infty }\left( \mathcal{T};\mathbb{R}\right) \otimes \left( \mathcal{%
C}_{per}^{\infty }\left( Y;\mathbb{R}\right) /\mathbb{C}\right) ^{N}\right) %
\right] \\
&&\times \left[ \mathcal{D}\left( Q;\mathbb{R}\right) \otimes \left[ 
\mathcal{C}_{per}^{\infty }\left( \mathcal{T};\mathbb{R}\right) \otimes
\left( \mathcal{C}_{per}^{\infty }\left( Y;\mathbb{R}\right) \otimes \left( 
\mathcal{C}_{per}^{\infty }\left( Z;\mathbb{R}\right) \mathfrak{/}\mathbb{C}%
\right) ^{N}\right) \right] \right]
\end{eqnarray*}%
(where $\mathcal{C}_{per}^{\infty }\left( \mathcal{T};\mathbb{R}\right) =%
\mathcal{C}_{per}\left( \mathcal{T};\mathbb{R}\right) \cap \mathcal{C}%
^{\infty }\left( \mathbb{R}\right) $,$\ \mathcal{C}_{per}^{\infty }\left( Y;%
\mathbb{R}\right) =\mathcal{C}_{per}\left( Y;\mathbb{R}\right) \cap \mathcal{%
C}^{\infty }\left( \mathbb{R}^{N}\right) $, $\mathcal{C}_{per}^{\infty
}\left( Y;\mathbb{R}\right) /\mathbb{C=}\left\{ v\in \mathcal{C}%
_{per}^{\infty }\left( Y;\mathbb{R}\right) :\int_{Y}v\left( y\right)
dy=0\right\} $, $\mathcal{C}_{per}^{\infty }\left( Z;\mathbb{R}\right) =%
\mathcal{C}_{per}\left( Z;\mathbb{R}\right) \cap \mathcal{C}^{\infty }\left( 
\mathbb{R}^{N}\right) $ and $\mathcal{C}_{per}^{\infty }\left( Z;\mathbb{R}%
\right) /\mathbb{C=}\left\{ v\in \mathcal{C}_{per}^{\infty }\left( Z;\mathbb{%
R}\right) :\int_{Z}v\left( y\right) dy=0\right\} $) is a dense subspace of $%
\mathbb{E}_{0}^{1}$.
\end{lemma}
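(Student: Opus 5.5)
Since $\mathbb{E}_{0}^{1}$ is a finite product of Hilbert spaces carrying the product norm, and $E_{0}^{\infty }$ is the product of the three displayed subspaces, it suffices to prove density factor by factor, and, because the spaces are $N$-fold products of scalar ones, component by component. The plan therefore reduces the lemma to three scalar density statements:
\begin{equation*}
\text{(a) } \mathcal{D}\left( Q;\mathbb{R}\right) \text{ dense in } L^{2}\left( 0,T;H_{0}^{1}\left( \Omega ;\mathbb{R}\right) \right) ;
\end{equation*}
\begin{equation*}
\text{(b) } \mathcal{D}\left( Q;\mathbb{R}\right) \otimes \mathcal{C}_{per}^{\infty }\left( \mathcal{T};\mathbb{R}\right) \otimes \left( \mathcal{C}_{per}^{\infty }\left( Y;\mathbb{R}\right) /\mathbb{C}\right) \text{ dense in } L^{2}\left( Q;L_{per}^{2}\left( \mathcal{T};H_{\#}^{1}\left( Y;\mathbb{R}\right) \right) \right) ;
\end{equation*}
and (c), the analogous three-level statement for $L^{2}\left( Q;L_{per}^{2}\left( \mathcal{T};L_{per}^{2}\left( Y;H_{\#}^{1}\left( Z;\mathbb{R}\right) \right) \right) \right) $. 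Each of these spaces is normed by the $L^{2}$-norm of the gradient in the relevant variable, which is a genuine norm because of the mean-zero condition in $H_{\#}^{1}$.

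For (a), identify $L^{2}\left( 0,T;H_{0}^{1}\left( \Omega \right) \right) $ with the Hilbert tensor product $L^{2}\left( 0,T\right) \widehat{\otimes }H_{0}^{1}\left( \Omega \right) $. Algebraic tensors $\varphi \otimes v$ are dense in it. Then approximate $\varphi $ by $\mathcal{D}\left( ]0,T[\right) $ in $L^{2}\left( 0,T\right) $ and $v$ by $\mathcal{D}\left( \Omega \right) $ in $H_{0}^{1}\left( \Omega \right) $. Since $\mathcal{D}\left( ]0,T[\right) \otimes \mathcal{D}\left( \Omega \right) \subset \mathcal{D}\left( Q\right) $, this gives (a).

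For (b), use the same tensor-product identification,
\begin{equation*}
L^{2}\left( Q;L_{per}^{2}\left( \mathcal{T};H_{\#}^{1}\left( Y\right) \right) \right) \cong L^{2}\left( Q\right) \widehat{\otimes }L_{per}^{2}\left( \mathcal{T}\right) \widehat{\otimes }H_{\#}^{1}\left( Y\right) ,
\end{equation*}
valid because all factors are separable Hilbert spaces, so Bochner $L^{2}$ spaces coincide with Hilbert tensor products. It then suffices to approximate each factor separately:
\begin{itemize}
\item $\mathcal{D}\left( Q\right) $ is dense in $L^{2}\left( Q\right) $;
\item $\mathcal{C}_{per}^{\infty }\left( \mathcal{T}\right) $ is dense in $L_{per}^{2}\left( \mathcal{T}\right) $ (trigonometric polynomials);
\item $\mathcal{C}_{per}^{\infty }\left( Y\right) /\mathbb{C}$ is dense in $H_{\#}^{1}\left( Y\right) $.
\end{itemize}
For the last item, take the truncated Fourier series $\sum_{0<\left\vert k\right\vert \leq n}\widehat{u}\left( k\right) e^{2i\pi k\cdot y}$, which converges in the gradient norm and has zero mean. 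Taking real parts keeps real-valued data real. A product of approximants approximates a product of elements, by the cross-norm property $\left\Vert a\otimes b\otimes c\right\Vert =\left\Vert a\right\Vert \left\Vert b\right\Vert \left\Vert c\right\Vert $ and a telescoping estimate. Linearity then extends the approximation from elementary tensors to the whole algebraic tensor product, which is dense.

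For (c), argue the same way with four factors, $L^{2}\left( Q\right) \widehat{\otimes }L_{per}^{2}\left( \mathcal{T}\right) \widehat{\otimes }L_{per}^{2}\left( Y\right) \widehat{\otimes }H_{\#}^{1}\left( Z\right) $, using the density of $\mathcal{C}_{per}^{\infty }\left( Y\right) $ in $L_{per}^{2}\left( Y\right) $ and of $\mathcal{C}_{per}^{\infty }\left( Z\right) /\mathbb{C}$ in $H_{\#}^{1}\left( Z\right) $. The main obstacle I expect is justifying these tensor-product identifications, i.e.\ that elementary tensors are dense in the iterated Bochner spaces. I would handle this directly. Simple functions $\sum \chi _{A_{k}}\otimes w_{k}$ are dense in $L^{2}\left( Q;X\right) $, and this applies iteratively to the $L_{per}^{2}\left( \mathcal{T};\cdot \right) $ and $L_{per}^{2}\left( Y;\cdot \right) $ layers by identifying periodic functions with functions on the unit cell. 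Each $w_{k}$ in the innermost Hilbert space is then approximated as above. The approximations must stay inside the real-valued, mean-zero subspaces, and this is preserved by every step.
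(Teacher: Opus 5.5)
Your proof is correct. The paper gives no proof of Lemma~\ref{lem3.1}. It states the lemma and then uses it in the proofs of Theorem~\ref{th3.1} and Proposition~\ref{pr3.3}, tacitly relying on standard facts from the two-scale literature it cites, so there is no argument in the paper to compare yours against.

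What you write is the standard justification. You reduce to the three scalar factors. You then use that tensors built from dense subspaces of each factor are dense. This can be phrased via Hilbert tensor products, as you do, or via the more commonly quoted fact that $\mathcal{D}(Q)\otimes F$ is dense in $L^{2}(Q;F)$ for any Banach space $F$. The latter is exactly your simple-function fallback and sidesteps the identification question entirely. Finally, you supply the one genuinely scalar ingredient: the density of $\mathcal{C}^{\infty}_{per}(Y;\mathbb{R})/\mathbb{C}$ in $H^{1}_{\#}(Y)$ for the gradient norm, via truncated Fourier series.

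Two small points of precision. First, for factor (a) the gradient seminorm is a norm because of the Poincar\'e inequality in $H^{1}_{0}(\Omega)$ with $\Omega$ bounded, not because of a mean-zero condition. Second, the componentwise reduction uses that the algebraic tensor product distributes over finite direct sums, e.g. $\mathcal{D}(Q)\otimes\bigl(\mathcal{C}^{\infty}_{per}(\mathcal{T})\otimes(\mathcal{C}^{\infty}_{per}(Y)/\mathbb{C})^{N}\bigr)=\bigl(\mathcal{D}(Q)\otimes\mathcal{C}^{\infty}_{per}(\mathcal{T})\otimes(\mathcal{C}^{\infty}_{per}(Y)/\mathbb{C})\bigr)^{N}$. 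This is immediate but worth stating.
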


We consider also the vector space 
\begin{eqnarray*}
\mathbb{E}_{0}^{1,\tau } &=&H_{0}^{1}\left( \Omega ;\mathbb{R}\right)
^{N}\times L^{2}\left( \Omega ;L_{per}^{2}\left( \mathcal{T}%
;H_{\#}^{1}\left( Y;\mathbb{R}\right) ^{N}\right) \right) \\
&&\times L^{2}\left( \Omega ;L_{per}^{2}\left( \mathcal{T};L_{per}^{2}\left(
Y;H_{\#}^{1}\left( Z;\mathbb{R}\right) ^{N}\right) \right) \right) \text{,}
\end{eqnarray*}%
provided with the norm 
\begin{eqnarray*}
\left\Vert \mathbf{v}\right\Vert _{\mathbb{E}_{0}^{1,\tau }} &=&\left(
\sum_{i,j=1}^{N}\left( \left\Vert \frac{\partial v_{0}^{j}}{\partial x_{i}}%
\right\Vert _{L^{2}\left( \Omega \right) }^{2}+\left\Vert \frac{\partial
v_{1}^{j}}{\partial y_{i}}\right\Vert _{L^{2}\left( \Omega
;L_{per}^{2}\left( Y\times \mathcal{T}\right) \right) }^{2}+\left\Vert \frac{%
\partial v_{2}^{j}}{\partial z_{i}}\right\Vert _{L^{2}\left( \Omega
;L_{per}^{2}\left( Y\times \mathcal{T\times }Z\right) \right) }^{2}\right)
\right) ^{\frac{1}{2}} \\
&&\left( \mathbf{v=}\left( \mathbf{v}_{0},\mathbf{v}_{1},\mathbf{v}%
_{2}\right) \in \mathbb{E}_{0}^{1,\tau }\right)
\end{eqnarray*}%
which makes it a Hilbert space. Furthermore,%
\begin{equation*}
\mathbb{E}_{0}^{1}=L^{2}\left( 0,T;\mathbb{E}_{0}^{1,\tau }\right) \text{.}
\end{equation*}

For any $1\leq k\leq N$, let us put 
\begin{equation*}
\mathbf{v}^{k}\mathbf{=}\left( v_{0}^{k},v_{1}^{k},v_{2}^{k}\right) \text{,
where }\mathbf{v=}\left( \mathbf{v}_{0},\mathbf{v}_{1},\mathbf{v}_{2}\right)
\in \mathbb{E}_{0}^{1,\tau }\text{ with }\mathbf{v}_{0}=\left(
v_{0}^{j}\right) \text{, }\mathbf{v}_{1}=\left( v_{1}^{j}\right) \text{ and }%
\mathbf{v}_{2}=\left( v_{2}^{j}\right) \text{,}
\end{equation*}%
and%
\begin{equation*}
\mathbb{D}_{i}\mathbf{v}^{k}=\frac{\partial v_{0}^{k}}{\partial x_{i}}+\frac{%
\partial v_{1}^{k}}{\partial y_{i}}+\frac{\partial v_{2}^{k}}{\partial z_{i}}%
\text{\qquad }\left( 1\leq i\leq N\right) \text{.}
\end{equation*}%
We are led to 
\begin{equation*}
\left\Vert \mathbf{v}\right\Vert _{\mathbb{E}_{0}^{1,\tau }}=\left[
\sum_{j,k=1}^{N}\left\Vert \mathbb{D}_{j}\mathbf{v}^{k}\right\Vert
_{L^{2}\left( \Omega ;L_{per}^{2}\left( Y\times \mathcal{T\times }Z\right)
\right) }^{2}\right] ^{\frac{1}{2}}\text{\qquad }\left( \mathbf{v=}\left( 
\mathbf{v}_{0},\mathbf{v}_{1},\mathbf{v}_{2}\right) \in \mathbb{E}%
_{0}^{1,\tau }\right) \text{.}
\end{equation*}%
On $\mathbb{E}_{0}^{1,\tau }\times \mathbb{E}_{0}^{1,\tau }$ we consider de
bilinear form $a_{\Omega }\left( .,.\right) $ defined by 
\begin{equation*}
a_{\Omega }\left( \mathbf{u},\mathbf{v}\right) =\sum_{i,j,k=1}^{N}\int \int
\int \int_{\Omega \times Y\times \mathcal{T\times }Z}a_{ij}\mathbb{D}_{i}%
\mathbf{u}^{k}\mathbb{D}_{i}\mathbf{v}^{k}dxdydzd\tau \text{ for all }\left( 
\mathbf{u},\mathbf{v}\right) \in \mathbb{E}_{0}^{1,\tau }\times \mathbb{E}%
_{0}^{1,\tau }\text{.}
\end{equation*}

\begin{remark}
\label{rem3.1} In view (\ref{eq1.1a})-(\ref{eq1.1c}), $a_{\Omega }\left(
.,.\right) $\ is continuous symmetric and $E_{0}^{1,\tau }$-coercive.
\end{remark}

We recall that for $\Omega $ sufficiently smooth $V=\left\{ \mathbf{v}\in
H_{0}^{1}\left( \Omega ;\mathbb{R}\right) ^{N}:div\mathbf{v}=0\right\} $.
For fixed $0<\varepsilon <1$, we introduce the bilinear form $a^{\varepsilon
}$ on $H_{0}^{1}\left( \Omega ;\mathbb{R}\right) ^{N}\times H_{0}^{1}\left(
\Omega ;\mathbb{R}\right) ^{N}$ defined by%
\begin{equation*}
a^{\varepsilon }\left( \mathbf{u},\mathbf{v}\right)
=\sum_{k=1}^{N}\sum_{i,j=1}^{N}\int_{\Omega }a_{ij}^{\varepsilon }\frac{%
\partial u^{k}}{\partial x_{j}}\frac{\partial v^{k}}{\partial x_{i}}dx
\end{equation*}%
for $\mathbf{u}=\left( u^{k}\right) $ and $\mathbf{v}=\left( v^{k}\right) $
in $H_{0}^{1}\left( \Omega ;\mathbb{R}\right) ^{N}$. By virtue of (\ref%
{eq1.1a})-(\ref{eq1.1c}), the form $a^{\varepsilon }$ is symmetric,
continuous and $H_{0}^{1}\left( \Omega ;\mathbb{R}\right) ^{N}$-coercive. We
introduce also the trilinear form $b$ on $H_{0}^{1}\left( \Omega ;\mathbb{R}%
\right) ^{N}\times H_{0}^{1}\left( \Omega ;\mathbb{R}\right) ^{N}\times
H_{0}^{1}\left( \Omega ;\mathbb{R}\right) ^{N}$ defined by%
\begin{equation*}
b\left( \mathbf{u},\mathbf{v},\mathbf{w}\right)
=\sum_{k=1}^{N}\sum_{j=1}^{N}\int_{\Omega }u^{j}\frac{\partial v^{k}}{%
\partial x_{j}}w^{k}dx
\end{equation*}%
for $\mathbf{u}=\left( u^{k}\right) $, $\mathbf{v}=\left( v^{k}\right) $ and 
$\mathbf{w}=\left( w^{k}\right) \in H_{0}^{1}\left( \Omega ;\mathbb{R}%
\right) ^{N}$. The form $b$ has the following nice properties \cite[%
pp.162-163]{bib14}:%
\begin{equation}
\left\vert b\left( \mathbf{u},\mathbf{v},\mathbf{w}\right) \right\vert \leq
c\left( N\right) \left\Vert \mathbf{u}\right\Vert _{H_{0}^{1}\left( \Omega
\right) ^{N}}\left\Vert \mathbf{v}\right\Vert _{H_{0}^{1}\left( \Omega
\right) ^{N}}\left\Vert \mathbf{w}\right\Vert _{H_{0}^{1}\left( \Omega
\right) ^{N}}  \label{eq2.5a}
\end{equation}%
for all $\mathbf{u}$, $\mathbf{v}$ and $\mathbf{w}\in H_{0}^{1}\left( \Omega
;\mathbb{R}\right) ^{N}$, where the positive constant $c\left( N\right) $
depends on $N$ and $\Omega $, and where 
\begin{equation*}
\left\Vert \mathbf{v}\right\Vert _{H_{0}^{1}\left( \Omega \right)
^{N}}=\left( \sum_{k=1}^{N}\int_{\Omega }\left\vert \nabla v^{k}\right\vert
dx\right) ^{\frac{1}{2}}
\end{equation*}%
with $\nabla v^{k}=\left( \frac{\partial v^{k}}{\partial x_{1}},...,\frac{%
\partial v^{k}}{\partial x_{N}}\right) $;%
\begin{equation}
b\left( \mathbf{u},\mathbf{v},\mathbf{v}\right) =0\qquad \left( \mathbf{u}%
\in V\text{, }\mathbf{v}\in H_{0}^{1}\left( \Omega ;\mathbb{R}\right)
^{N}\right)  \label{eaq2.5b}
\end{equation}%
and%
\begin{equation}
b\left( \mathbf{u},\mathbf{v},\mathbf{w}\right) =-b\left( \mathbf{u},\mathbf{%
w},\mathbf{v}\right) \qquad \left( \mathbf{u}\in V\text{, }\mathbf{v}\text{
and }\mathbf{w}\in H_{0}^{1}\left( \Omega ;\mathbb{R}\right) ^{N}\right) 
\text{.}  \label{eq2.5c}
\end{equation}

\noindent Let us set 
\begin{equation*}
\mathbb{E}_{0}^{\tau }=V\times L^{2}\left( \Omega ;L_{per}^{2}\left( 
\mathcal{T};W_{y}\right) \right) \times L^{2}\left( \Omega
;L_{per}^{2}\left( \mathcal{T};L_{per}^{2}\left( Y;W_{z}\right) \right)
\right) \text{, }
\end{equation*}%
where 
\begin{equation*}
W_{y}=\left\{ \mathbf{u}\in H_{\#}^{1}\left( Y;\mathbb{R}\right) ^{N}:div_{y}%
\mathbf{u}=0\right\} \text{,}
\end{equation*}%
\begin{equation*}
W_{z}=\left\{ \mathbf{u}\in H_{\#}^{1}\left( Z;\mathbb{R}\right) ^{N}:div_{z}%
\mathbf{u}=0\right\} \text{,}
\end{equation*}%
and where $div_{y}\mathbf{u}=\sum_{j=1}^{N}\frac{\partial u^{j}}{\partial
y_{j}}$ for $\mathbf{u}=\left( u^{j}\right) \in H_{\#}^{1}\left( Y\right)
^{N}$ and $div_{z}\mathbf{u}=\sum_{j=1}^{N}\frac{\partial u^{j}}{\partial
z_{j}}$ for $\mathbf{u}=\left( u^{j}\right) \in H_{\#}^{1}\left( Z\right)
^{N}$. $\mathbb{E}_{0}^{\tau }$ is a closed vector subspace of $\mathbb{E}%
_{0}^{1,\tau }$.

\begin{lemma}
\label{lem3.2} Let $\mathbf{u=}\left( \mathbf{u}_{0},\mathbf{u}_{1},\mathbf{u%
}_{2}\right) \in \mathbb{E}_{0}^{1}$ such that $\mathbf{u}_{0}\in \mathcal{W}%
\left( 0,T\right) $. Suppose $\mathbf{u}$ is a solution to the variational
problem%
\begin{equation}
\mathbf{u}\in L^{2}\left( 0,T;\mathbb{E}_{0}^{\tau }\right) \text{,}
\label{eq3.1a}
\end{equation}%
\begin{equation}
\mathbf{u}_{0}\left( 0\right) =0\text{,}  \label{eq3.1b}
\end{equation}%
\begin{equation}
\left\{ 
\begin{array}{c}
\int_{0}^{T}\left\langle \mathbf{u}_{0}^{\prime }\left( t\right) ,\mathbf{v}%
_{0}\left( t\right) \right\rangle dt+\int_{0}^{T}a_{\Omega }\left( \mathbf{u}%
\left( t\right) ,\mathbf{v}\left( t\right) \right) dt+\int_{0}^{T}b\left( 
\mathbf{u}_{0}\left( t\right) ,\mathbf{u}_{0}\left( t\right) ,\mathbf{v}%
_{0}\left( t\right) \right) dt \\ 
=\int_{0}^{T}\left( \mathbf{f}\left( t\right) ,\mathbf{v}_{0}\left( t\right)
\right) dt \\ 
\text{for all }\mathbf{v=}\left( \mathbf{v}_{0},\mathbf{v}_{1},\mathbf{v}%
_{2}\right) \in L^{2}\left( 0,T;\mathbb{E}_{0}^{\tau }\right) \text{.}\qquad
\qquad \qquad \qquad \qquad \qquad%
\end{array}%
\right.  \label{eq3.1c}
\end{equation}%
Then $\mathbf{u}$ is unique.
\end{lemma}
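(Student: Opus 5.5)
Take two solutions $\mathbf{u}=(\mathbf{u}_{0},\mathbf{u}_{1},\mathbf{u}_{2})$ and $\mathbf{w}=(\mathbf{w}_{0},\mathbf{w}_{1},\mathbf{w}_{2})$ of (\ref{eq3.1a})-(\ref{eq3.1c}) with $\mathbf{u}_{0},\mathbf{w}_{0}\in \mathcal{W}(0,T)$, set $\boldsymbol{\delta}=\mathbf{u}-\mathbf{w}$, and run the classical 2D Navier-Stokes energy argument combined with Gronwall's lemma. Subtracting the two variational identities (\ref{eq3.1c}) gives, for every $\mathbf{v}\in L^{2}(0,T;\mathbb{E}_{0}^{\tau })$,
\begin{equation*}
\int_{0}^{T}\left\langle \boldsymbol{\delta}_{0}^{\prime },\mathbf{v}_{0}\right\rangle dt+\int_{0}^{T}a_{\Omega }(\boldsymbol{\delta},\mathbf{v})dt+\int_{0}^{T}\left[ b(\mathbf{u}_{0},\mathbf{u}_{0},\mathbf{v}_{0})-b(\mathbf{w}_{0},\mathbf{w}_{0},\mathbf{v}_{0})\right] dt=0 .
\end{equation*}
Since the test class is a full $L^{2}$ in time, this can be localized: replacing $\mathbf{v}$ by $\chi _{(0,s)}\mathbf{v}$ turns it into the same identity on $(0,s)$ for each $s\in ]0,T]$. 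Choose $\mathbf{v}=\chi _{(0,s)}\boldsymbol{\delta}$, which is admissible because $\boldsymbol{\delta}\in L^{2}(0,T;\mathbb{E}_{0}^{\tau })$.

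The time-derivative term is handled by the Lions-Magenes formula, valid since $\boldsymbol{\delta}_{0}\in \mathcal{W}(0,T)$: $2\int_{0}^{s}\langle \boldsymbol{\delta}_{0}^{\prime },\boldsymbol{\delta}_{0}\rangle dt=|\boldsymbol{\delta}_{0}(s)|^{2}-|\boldsymbol{\delta}_{0}(0)|^{2}=|\boldsymbol{\delta}_{0}(s)|^{2}$, using (\ref{eq3.1b}). For the viscous term, Remark \ref{rem3.1} (from (\ref{eq1.1c}) and the symmetry (\ref{eq1.1b})) gives $a_{\Omega }(\boldsymbol{\delta},\boldsymbol{\delta})\geq \alpha \Vert \boldsymbol{\delta}\Vert _{\mathbb{E}_{0}^{1,\tau }}^{2}$. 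By Fubini and the periodicity and zero mean of $\mathbf{v}_{1}$ in $y$ and of $\mathbf{v}_{2}$ in $z$, the cross terms integrate to zero, so $\Vert \boldsymbol{\delta}\Vert _{\mathbb{E}_{0}^{1,\tau }}^{2}\geq \Vert \boldsymbol{\delta}_{0}\Vert ^{2}$. We may therefore keep $\alpha \int_{0}^{s}\Vert \boldsymbol{\delta}_{0}\Vert ^{2}dt$ on the left.

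The step that needs care is the trilinear term. Write $b(\mathbf{u}_{0},\mathbf{u}_{0},\boldsymbol{\delta}_{0})-b(\mathbf{w}_{0},\mathbf{w}_{0},\boldsymbol{\delta}_{0})=b(\boldsymbol{\delta}_{0},\mathbf{u}_{0},\boldsymbol{\delta}_{0})+b(\mathbf{w}_{0},\boldsymbol{\delta}_{0},\boldsymbol{\delta}_{0})$. The second piece vanishes by (\ref{eaq2.5b}), since $\mathbf{w}_{0}(t)\in V$. For the first piece, (\ref{eq2.5a}) alone is not enough; we need the 2D Ladyzhenskaya inequality $\Vert v\Vert _{L^{4}}^{2}\leq c|v|\Vert v\Vert $ (standard, see \cite{bib14}). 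It gives
\begin{equation*}
|b(\boldsymbol{\delta}_{0},\mathbf{u}_{0},\boldsymbol{\delta}_{0})|\leq c|\boldsymbol{\delta}_{0}|\Vert \boldsymbol{\delta}_{0}\Vert \Vert \mathbf{u}_{0}\Vert \leq \frac{\alpha }{2}\Vert \boldsymbol{\delta}_{0}\Vert ^{2}+\frac{c^{2}}{2\alpha }\Vert \mathbf{u}_{0}\Vert ^{2}|\boldsymbol{\delta}_{0}|^{2}.
\end{equation*}
This is exactly where $N=2$ is essential.

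Combining the three estimates yields, for all $s\in ]0,T]$,
\begin{equation*}
|\boldsymbol{\delta}_{0}(s)|^{2}+\alpha \int_{0}^{s}\Vert \boldsymbol{\delta}\Vert _{\mathbb{E}_{0}^{1,\tau }}^{2}dt\leq \frac{c^{2}}{\alpha }\int_{0}^{s}\Vert \mathbf{u}_{0}(t)\Vert ^{2}|\boldsymbol{\delta}_{0}(t)|^{2}dt .
\end{equation*}
Here I retain the full $\mathbb{E}_{0}^{1,\tau }$ norm on the left, using $\alpha \Vert \boldsymbol{\delta}\Vert _{\mathbb{E}_{0}^{1,\tau }}^{2}-\frac{\alpha }{2}\Vert \boldsymbol{\delta}_{0}\Vert ^{2}\geq \frac{\alpha }{2}\Vert \boldsymbol{\delta}\Vert _{\mathbb{E}_{0}^{1,\tau }}^{2}$ and adjusting constants. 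Since $t\mapsto \Vert \mathbf{u}_{0}(t)\Vert ^{2}$ belongs to $L^{1}(0,T)$, Gronwall's lemma gives $\boldsymbol{\delta}_{0}=0$ on $[0,T]$. The inequality then forces $\int_{0}^{T}\Vert \boldsymbol{\delta}\Vert _{\mathbb{E}_{0}^{1,\tau }}^{2}dt=0$. Because $\Vert \cdot \Vert _{\mathbb{E}_{0}^{1,\tau }}$ is a norm on $\mathbb{E}_{0}^{1,\tau }$ (Poincaré in $x$, and the zero means in $y$ and $z$), this gives $\boldsymbol{\delta}_{1}=0$ and $\boldsymbol{\delta}_{2}=0$. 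Hence $\mathbf{u}=\mathbf{w}$.
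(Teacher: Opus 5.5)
Your argument is correct and is essentially the one the paper relies on: the paper's proof simply defers to Lemma 2.3 of the author's earlier non-stationary paper, with Remark~\ref{rem3.1} supplying the coercivity of $a_{\Omega}$. That lemma is the classical two-dimensional energy argument you give: Lions--Magenes for the time derivative, the identity $b(\mathbf{w}_{0},\mathbf{v},\mathbf{v})=0$ to remove one trilinear piece, Ladyzhenskaya plus Young for the other, then Gronwall. You also supply details the paper leaves implicit, notably the orthogonality of the $x$-, $y$- and $z$-gradient contributions, which lets the same inequality force the two corrector components of the difference to vanish as well.
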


\begin{proof}
Taking in account Remark \ref{rem3.1}, the proof of this lemma carries over
mutatis mutandis the one in \cite[Lemma 2.3]{bib11}.
\end{proof}

In the following lines, most of the notations are of \cite{bib11} and \cite%
{bib12}. Let us notice that $E_{0}^{\infty }$ is the space of functions of
the form 
\begin{equation}
\mathbf{\Phi =}\left( \mathbf{\psi }_{0},\mathbf{\psi }_{1},\mathbf{\psi }%
_{2}\right)  \label{eq3.2a}
\end{equation}%
with $\mathbf{\psi }_{0}=\left( \psi _{0}^{j}\right) \in \mathcal{D}\left( Q;%
\mathbb{R}\right) ^{N}$, $\mathbf{\psi }_{1}=\left( \psi _{1}^{j}\right) \in 
\mathcal{D}\left( Q;\mathbb{R}\right) \otimes \left( \mathcal{C}%
_{per}^{\infty }\left( \mathcal{T};\mathbb{R}\right) \otimes \left( \mathcal{%
C}_{per}^{\infty }\left( Y;\mathbb{R}\right) /\mathbb{C}\right) ^{N}\right) $
and $\mathbf{\psi }_{2}=\left( \psi _{2}^{j}\right) \in \mathcal{D}\left( Q;%
\mathbb{R}\right) \otimes \left[ \mathcal{C}_{per}^{\infty }\left( \mathcal{T%
};\mathbb{R}\right) \otimes \left( \mathcal{C}_{per}^{\infty }\left( Y;%
\mathbb{R}\right) \otimes \left( \mathcal{C}_{per}^{\infty }\left( Z;\mathbb{%
R}\right) \mathfrak{/}\mathbb{C}\right) ^{N}\right) \right] $. Let $\mathbf{%
\Phi }$ be as in (\ref{eq3.2a}). For $\varepsilon >0$ we set 
\begin{equation}
\mathbf{\Phi }_{\varepsilon }=\mathbf{\psi }_{0}+\varepsilon \mathbf{\psi }%
_{1}^{\varepsilon }+\varepsilon ^{2}\mathbf{\psi }_{2}^{\varepsilon }\text{,}
\label{eq3.2b}
\end{equation}%
i.e., $\Phi _{\varepsilon }^{j}\left( x,t\right) =\psi _{0}^{j}\left(
x,t\right) +\varepsilon \psi _{1}^{j}\left( x,t,\frac{x}{\varepsilon },\frac{%
t}{\varepsilon }\right) +\varepsilon ^{2}\psi _{2}^{j}\left( x,t,\frac{x}{%
\varepsilon },\frac{t}{\varepsilon },\frac{x}{\varepsilon ^{2}}\right) \quad
\left( \left( x,t\right) \in Q\text{, }1\leq j\leq N\right) $.

\begin{lemma}
\label{lem3.3} Let $\left( \mathbf{u}_{\varepsilon }\right) _{\varepsilon
\in E}=\left( u_{\varepsilon }^{1},...,u_{\varepsilon }^{N}\right)
_{\varepsilon \in E}\subset L^{2}\left( 0,T;H_{0}^{1}\left( \Omega ;\mathbb{R%
}\right) \right) ^{N}$, where $E$ is fundamental sequence. Suppose that as $%
E\ni \varepsilon \rightarrow 0$%
\begin{equation}
\frac{\partial u_{\varepsilon }^{k}}{\partial x_{j}}\rightarrow \mathbb{D}%
_{j}\mathbf{u}^{k}\text{ reiteratively in }L^{2}\left( Q\right) \text{ }%
\Sigma \text{-weak }\left( 1\leq j,k\leq N\right) \text{,}  \label{eq3.3a}
\end{equation}%
where $\mathbf{u=}\left( \mathbf{u}_{0},\mathbf{u}_{1},\mathbf{u}_{2}\right)
\in \mathbb{E}_{0}^{1}$. Then, 
\begin{equation}
\int_{0}^{T}a^{\varepsilon }\left( \mathbf{u}_{\varepsilon }\left( t\right) ,%
\mathbf{\Phi }_{\varepsilon }\left( t\right) \right) dt\rightarrow
\int_{0}^{T}a_{\Omega }\left( \mathbf{u}\left( t\right) ,\mathbf{\Phi }%
\left( t\right) \right) dt  \label{eq3.3b}
\end{equation}%
as $E\ni \varepsilon \rightarrow 0$. Moreover, if $\mathbf{u}_{\varepsilon
}\rightarrow \mathbf{u}_{0}$ in $L^{2}\left( Q\right) ^{N}$ as $E\ni
\varepsilon \rightarrow 0$ then%
\begin{equation}
\int_{0}^{T}b\left( \mathbf{u}_{\varepsilon }\left( t\right) ,\mathbf{u}%
_{\varepsilon }\left( t\right) ,\mathbf{\Phi }_{\varepsilon }\left( t\right)
\right) dt\rightarrow \int_{0}^{T}b\left( \mathbf{u}_{0}\left( t\right) ,%
\mathbf{u}_{0}\left( t\right) ,\mathbf{\psi }_{0}\left( t\right) \right) dt
\label{eq3.3c}
\end{equation}%
as $E\ni \varepsilon \rightarrow 0$.
\end{lemma}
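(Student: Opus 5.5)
The plan is to handle the two limits separately. In each case I rewrite the integrand as a product of a reiteratively two-scale converging sequence and oscillating test functions, then apply Definition \ref{def2.1}, Remark \ref{rem2.1} and Proposition \ref{pr2.6}.

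For (\ref{eq3.3b}) I first compute the gradient of the test function. For $\mathbf{\Phi}_{\varepsilon}$ as in (\ref{eq3.2b}), the chain rule gives
\begin{equation*}
\frac{\partial \Phi_{\varepsilon}^{k}}{\partial x_{i}}=\frac{\partial \psi_{0}^{k}}{\partial x_{i}}+\left(\frac{\partial \psi_{1}^{k}}{\partial y_{i}}\right)^{\varepsilon}+\left(\frac{\partial \psi_{2}^{k}}{\partial z_{i}}\right)^{\varepsilon}+\varepsilon\left(\frac{\partial \psi_{1}^{k}}{\partial x_{i}}\right)^{\varepsilon}+\varepsilon\left(\frac{\partial \psi_{2}^{k}}{\partial y_{i}}\right)^{\varepsilon}+\varepsilon^{2}\left(\frac{\partial \psi_{2}^{k}}{\partial x_{i}}\right)^{\varepsilon}.
\end{equation*}
Write this as $(\mathbb{D}_{i}\mathbf{\Phi}^{k})^{\varepsilon}+r_{\varepsilon}$. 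Since the $\psi$'s are smooth with compact support in $Q$, we have $\|r_{\varepsilon}\|_{L^{\infty}(Q)}\leq C\varepsilon$.

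With this splitting,
\begin{equation*}
\int_{0}^{T}a^{\varepsilon}(\mathbf{u}_{\varepsilon},\mathbf{\Phi}_{\varepsilon})dt=\sum_{i,j,k}\int_{Q}\frac{\partial u_{\varepsilon}^{k}}{\partial x_{j}}\left(a_{ij}\mathbb{D}_{i}\mathbf{\Phi}^{k}\right)^{\varepsilon}dxdt+\sum_{i,j,k}\int_{Q}a_{ij}^{\varepsilon}\frac{\partial u_{\varepsilon}^{k}}{\partial x_{j}}r_{\varepsilon}\,dxdt.
\end{equation*}
The function $a_{ij}\mathbb{D}_{i}\mathbf{\Phi}^{k}$ belongs to $\mathcal{C}(\overline{Q};\mathcal{C}_{per}(Y\times\mathcal{T}\times Z))$, because $a_{ij}$ is continuous and periodic and independent of $\tau$. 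By (\ref{eq3.3a}) and Definition \ref{def2.1}, the first sum therefore tends to $\int_{0}^{T}a_{\Omega}(\mathbf{u},\mathbf{\Phi})dt$.

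For the second sum I use that weakly two-scale convergent sequences are bounded in $L^{2}(Q)$ (uniform boundedness principle, applied to test functions independent of $y,\tau,z$). Together with $a_{ij}^{\varepsilon}$ being bounded, this makes the second sum $O(\varepsilon)$. One bookkeeping point: the index placement in $a_{\Omega}$ (as printed, both derivatives carry the index $i$) should be read as $a_{ij}\mathbb{D}_{j}\mathbf{u}^{k}\mathbb{D}_{i}\mathbf{\Phi}^{k}$, to match $a^{\varepsilon}$.

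For (\ref{eq3.3c}) I write
\begin{equation*}
\int_{0}^{T}b(\mathbf{u}_{\varepsilon},\mathbf{u}_{\varepsilon},\mathbf{\Phi}_{\varepsilon})dt=\sum_{j,k}\int_{Q}u_{\varepsilon}^{j}\frac{\partial u_{\varepsilon}^{k}}{\partial x_{j}}\Phi_{\varepsilon}^{k}dxdt.
\end{equation*}
The test function satisfies $\Phi_{\varepsilon}^{k}=\psi_{0}^{k}+O(\varepsilon)$ uniformly on $Q$. Moreover, $u_{\varepsilon}^{j}\partial u_{\varepsilon}^{k}/\partial x_{j}$ is bounded in $L^{1}(Q)$ by Cauchy--Schwarz and the $L^{2}$ bounds above. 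Hence
\begin{equation*}
\int_{0}^{T}b(\mathbf{u}_{\varepsilon},\mathbf{u}_{\varepsilon},\mathbf{\Phi}_{\varepsilon})dt=\sum_{j,k}\int_{Q}u_{\varepsilon}^{j}\frac{\partial u_{\varepsilon}^{k}}{\partial x_{j}}\psi_{0}^{k}\,dxdt+O(\varepsilon).
\end{equation*}
Next, $u_{\varepsilon}^{j}\to u_{0}^{j}$ in $L^{2}(Q)$, so Example \ref{ex2.1b}(iii) gives reiterative strong $\Sigma$ convergence. Proposition \ref{pr2.6} with $p=q=2$, $r=1$ then yields
\begin{equation*}
u_{\varepsilon}^{j}\frac{\partial u_{\varepsilon}^{k}}{\partial x_{j}}\to u_{0}^{j}\,\mathbb{D}_{j}\mathbf{u}^{k}\quad\text{reiteratively in }L^{1}(Q)\text{-weak }\Sigma.
\end{equation*}
Testing against $\psi_{0}^{k}\in\mathcal{D}(Q)\subset L^{\infty}(Q;\mathcal{C}_{per})$ gives the limit
\begin{equation*}
\int_{Q\times Y\times\mathcal{T}\times Z}u_{0}^{j}\left(\frac{\partial u_{0}^{k}}{\partial x_{j}}+\frac{\partial u_{1}^{k}}{\partial y_{j}}+\frac{\partial u_{2}^{k}}{\partial z_{j}}\right)\psi_{0}^{k}.
\end{equation*}
Since $u_{0}^{j}$ and $\psi_{0}^{k}$ do not depend on $(y,\tau,z)$, the terms with $\partial u_{1}^{k}/\partial y_{j}$ and $\partial u_{2}^{k}/\partial z_{j}$ integrate to zero by periodicity. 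What remains is $\int_{0}^{T}b(\mathbf{u}_{0},\mathbf{u}_{0},\psi_{0})dt$.

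The main obstacle I expect is the justification that Definition \ref{def2.1} may be applied with $p=1$ and $p'=\infty$. Proposition \ref{pr2.6} produces convergence in $L^{1}$-weak $\Sigma$, whose admissible test functions lie in $L^{\infty}(Q;\mathcal{C}_{per})$. Smooth, compactly supported $\psi_{0}$ is certainly admissible, so I would state this explicitly. A related point is the passage from the weak convergence in (\ref{eq3.3a}) to an $L^{2}(Q)$ bound on the gradients, which I would argue by the uniform boundedness principle as above.
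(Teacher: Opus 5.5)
Your proof is correct. For (\ref{eq3.3c}) it is essentially the paper's argument: Example \ref{ex2.1b}(iii) plus Proposition \ref{pr2.6} with $r=1$, tested against $\psi_0^k$. You also make explicit two points the paper leaves tacit. The first is why the $\varepsilon\psi_1^\varepsilon$ and $\varepsilon^2\psi_2^\varepsilon$ contributions vanish: an $L^1$ bound times an $O(\varepsilon)$ sup bound. The second is why the limit $u_0^j\mathbb{D}_j\mathbf{u}^k$ reduces to $u_0^j\,\partial u_0^k/\partial x_j$: the terms $\partial u_1^k/\partial y_j$ and $\partial u_2^k/\partial z_j$ have zero mean over $Y$ and $Z$.

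For (\ref{eq3.3b}) your route is genuinely different. The paper does not split off the $O(\varepsilon)$ part of $\partial\Phi_\varepsilon^k/\partial x_i$. Instead it:
\begin{itemize}
\item uses that $\partial\Phi_\varepsilon^k/\partial x_i\to\mathbb{D}_i\mathbf{\Phi}^k$ reiteratively strongly in every $L^p(Q)$;
\item applies Proposition \ref{pr2.6} to get $z_\varepsilon^{ijk}\to\mathbb{D}_j\mathbf{u}^k\,\mathbb{D}_i\mathbf{\Phi}^k$ reiteratively in $L^r(Q)$-weak $\Sigma$;
\item tests only against $f\in\mathcal{K}(Q;\mathcal{C}_{per}(Y\times\mathcal{T}\times Z))$, and so needs the $L^2$-boundedness of $z_\varepsilon^{ijk}$ plus the compactness Theorem \ref{th2.1} to identify the $L^2$ two-scale limit before it may take $f=a_{ij}$.
\end{itemize}

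You instead absorb the coefficient into the test function, using $a_{ij}^\varepsilon(\mathbb{D}_i\mathbf{\Phi}^k)^\varepsilon=(a_{ij}\mathbb{D}_i\mathbf{\Phi}^k)^\varepsilon$ with $a_{ij}\mathbb{D}_i\mathbf{\Phi}^k\in\mathcal{C}(\overline{Q};\mathcal{C}_{per}(Y\times\mathcal{T}\times Z))$. Definition \ref{def2.1} then applies directly, and the remainder is $O(\varepsilon)$.

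Your version is shorter and more elementary: it needs neither Proposition \ref{pr2.6} nor a subsequence extraction. It also justifies the $L^2$ bound on $\partial u_\varepsilon^k/\partial x_j$ by the uniform boundedness principle, which the paper merely asserts. The paper's version is more structural, since it relies only on strong $\Sigma$ convergence (and uniform boundedness) of the test gradients rather than the explicit $\varepsilon$-expansion of $\mathbf{\Phi}_\varepsilon$. However, its extension step could be avoided: $a_{ij}$, being constant in $(x,t)$, is already an admissible test function in $L^{r'}(Q;\mathcal{C}_{per}(Y\times\mathcal{T}\times Z))$.

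Both arguments rely in the same way on the continuity and periodicity of $a_{ij}$. Your reading of $a_\Omega$ with $a_{ij}\,\mathbb{D}_j\mathbf{u}^k\,\mathbb{D}_i\mathbf{v}^k$ is the right correction of the misprinted index.
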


\begin{proof}[Preuve]
Let us first note that 
\begin{equation*}
\frac{\partial \Phi _{\varepsilon }^{k}}{\partial x_{i}}=\frac{\partial \psi
_{0}^{k}}{\partial x_{i}}+\varepsilon \left( \frac{\partial \psi _{1}^{k}}{%
\partial x_{i}}\right) ^{\varepsilon }+\left( \frac{\partial \psi _{1}^{k}}{%
\partial y_{i}}\right) ^{\varepsilon }+\varepsilon ^{2}\left( \frac{\partial
\psi _{2}^{k}}{\partial x_{i}}\right) ^{\varepsilon }+\varepsilon \left( 
\frac{\partial \psi _{2}^{k}}{\partial y_{i}}\right) ^{\varepsilon }+\left( 
\frac{\partial \psi _{2}^{k}}{\partial z_{i}}\right) ^{\varepsilon }
\end{equation*}%
for $\varepsilon >0$ and $1\leq i,k\leq N$. Thus, for any real number $p\geq
1$, 
\begin{equation}
\frac{\partial \Phi _{\varepsilon }^{k}}{\partial x_{i}}\rightarrow \mathbb{D%
}_{i}\mathbf{\Phi }^{k}=\frac{\partial \psi _{0}^{k}}{\partial x_{i}}+\frac{%
\partial \psi _{1}^{k}}{\partial y_{i}}+\frac{\partial \psi _{2}^{k}}{%
\partial z_{i}}\text{ reiteratively in }L^{p}\left( Q\right) \text{ }\Sigma 
\text{-strong}  \label{eq3.4}
\end{equation}%
as $\varepsilon \rightarrow 0$ (Example \ref{ex2.1b} or Remark \ref{rem2.1}%
). Let 
\begin{equation*}
z_{\varepsilon }^{ijk}=\frac{\partial u_{\varepsilon }^{k}}{\partial x_{j}}%
\frac{\partial \Phi _{\varepsilon }^{k}}{\partial x_{i}}\text{ }\left(
\varepsilon \in E\text{ and }1\leq i,j,k\leq N\right) \text{.}
\end{equation*}%
For any $p\geq 2$, $z_{\varepsilon }^{ijk}\in L^{r}\left( Q\right) $ with $%
\frac{1}{r}=\frac{1}{2}+\frac{1}{p}$, and in virtue of Proposition \ref%
{pr2.6} it follows from (\ref{eq3.3a}) and (\ref{eq3.4}) that 
\begin{equation*}
z_{\varepsilon }^{ijk}\rightarrow \mathbb{D}_{j}\mathbf{u}^{k}\mathbb{D}_{i}%
\mathbf{\Phi }^{k}\text{ reiteratively in }L^{r}\left( Q\right) \text{ }%
\Sigma \text{-weak as }E\ni \varepsilon \rightarrow 0\text{.}
\end{equation*}%
Particularly, 
\begin{equation}
\int_{Q}z_{\varepsilon }^{ijk}f^{\varepsilon }dxdt\rightarrow \int \int \int
\int_{Q\times Y\times \mathcal{T}\times Z}\mathbb{D}_{j}\mathbf{u}^{k}%
\mathbb{D}_{i}\mathbf{\Phi }^{k}fdxdtdydzd\tau  \label{eq3.5}
\end{equation}%
for any $f\in \mathcal{K}\left( Q;\mathcal{C}_{per}\left( Y\times \mathcal{T}%
\times Z\right) \right) $ as $E\ni \varepsilon \rightarrow 0$. Further, $%
\left( \frac{\partial u_{\varepsilon }^{k}}{\partial x_{j}}\right)
_{\varepsilon \in E}$ is bounded in $L^{2}\left( Q\right) $, thus, $\left(
z_{\varepsilon }^{ijk}\right) _{\varepsilon \in E}$ is bounded too in $%
L^{2}\left( Q\right) $. Using Theorem \ref{th2.1}, a subsequence $E^{\prime
} $ can be extracted from $E$ such that in particular%
\begin{equation*}
\int_{Q}z_{\varepsilon }^{ijk}f^{\varepsilon }dxdt\rightarrow \int \int \int
\int_{Q\times Y\times \mathcal{T}\times Z}w_{0}fdxdtdydzd\tau \text{ as }%
E^{\prime }\ni \varepsilon \rightarrow 0
\end{equation*}%
for all $f\in \mathcal{K}\left( Q;\mathcal{C}_{per}\left( Y\times \mathcal{T}%
\times Z\right) \right) $, where $w_{0}\in L^{2}\left( Q;L_{per}^{2}\left(
Y\times \mathcal{T}\times Z\right) \right) $. Using (\ref{eq3.5}) we deduce
that $w_{0}=\mathbb{D}_{j}\mathbf{u}^{k}\mathbb{D}_{i}\mathbf{\Phi }^{k}$.
Thus, one can replace $f$ \ by $a_{ij}$ in (\ref{eq3.5}). Hence, (\ref%
{eq3.3b}) follows. Let us prove (\ref{eq3.3c}). Suppose $\mathbf{u}%
_{\varepsilon }\rightarrow \mathbf{u}_{0}$ in $L^{2}\left( Q\right) ^{N}$ as 
$E\ni \varepsilon \rightarrow 0$. Then by part (iii) of Example \ref{ex2.1b}
and Proposition \ref{pr2.6}, we have as $E\ni \varepsilon \rightarrow 0$%
\begin{equation*}
u_{\varepsilon }^{j}\frac{\partial u_{\varepsilon }^{k}}{\partial x_{j}}%
\rightarrow u_{0}^{j}\mathbb{D}_{j}\mathbf{u}^{k}\text{ reiteratively in }%
L^{1}\left( Q\right) \text{ }\Sigma \text{-weak }\left( 1\leq j,k\leq
N\right) \text{.}
\end{equation*}%
Thus, 
\begin{equation*}
\sum_{j,k=1}^{N}\int_{Q}u_{\varepsilon }^{j}\frac{\partial u_{\varepsilon
}^{k}}{\partial x_{j}}\psi _{0}^{k}dxdt\rightarrow
\sum_{j,k=1}^{N}\int_{Q}u_{0}^{j}\frac{\partial u_{0}^{k}}{\partial x_{j}}%
\psi _{0}^{k}dxdt\text{,}
\end{equation*}%
\begin{equation*}
\varepsilon \sum_{j,k=1}^{N}\int_{Q}u_{\varepsilon }^{j}\frac{\partial
u_{\varepsilon }^{k}}{\partial x_{j}}\left( \psi _{1}^{k}\right)
^{\varepsilon }dxdt\rightarrow 0\text{ and }\varepsilon
^{2}\sum_{j,k=1}^{N}\int_{Q}u_{\varepsilon }^{j}\frac{\partial
u_{\varepsilon }^{k}}{\partial x_{j}}\left( \psi _{2}^{k}\right)
^{\varepsilon }dxdt\rightarrow 0
\end{equation*}%
as $E\ni \varepsilon \rightarrow 0$. Hence, (\ref{eq3.3c}) follows.
\end{proof}

\subsection{A convergence result for (\protect\ref{eq1.2a})-(\protect\ref%
{eq1.2d})}

\begin{theorem}
\label{th3.1} Suppose $\mathbf{f}$, $\mathbf{f}^{\prime }\in L^{2}\left(
0,T;V^{\prime }\right) $ and $\mathbf{f}\left( 0\right) \in H$. For any $%
0<\varepsilon <1$, let $\mathbf{u}_{\varepsilon }=\left( u_{\varepsilon
}^{k}\right) $ be defined by (\ref{eq1.2a})-(\ref{eq1.2d}). Then as $%
\varepsilon \rightarrow 0$, 
\begin{equation}
\mathbf{u}_{\varepsilon }\rightarrow \mathbf{u}_{0}\text{ in }\mathcal{W}%
\left( 0,T\right) \text{-weak \qquad \qquad \qquad \qquad \qquad }
\label{eq3.6a}
\end{equation}%
and%
\begin{equation}
\frac{\partial u_{\varepsilon }^{k}}{\partial x_{j}}\rightarrow \mathbb{D}%
_{j}\mathbf{u}^{k}\text{ reiteratively in }L^{2}\left( Q\right) \text{ }%
\Sigma \text{-weak }  \label{eq3.6b}
\end{equation}%
$\left( 1\leq j,k\leq N\right) $, where $\mathbf{u=}\left( \mathbf{u}_{0},%
\mathbf{u}_{1},\mathbf{u}_{2}\right) $ (with $\mathbf{u}_{0}=\left(
u_{0}^{k}\right) $, $\mathbf{u}_{1}=\left( u_{1}^{k}\right) $ and $\mathbf{u}%
_{2}=\left( u_{2}^{k}\right) $) is the unique solution to the variational
problem (\ref{eq3.1a})-(\ref{eq3.1c}).
\end{theorem}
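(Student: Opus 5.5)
We follow the standard energy method combined with reiterated two-scale compactness, then identify the limit through oscillating test functions. By uniqueness (Lemma \ref{lem3.2}) it suffices to show that from any fundamental sequence $E$ one can extract a subsequence $E'$ along which (\ref{eq3.6a})--(\ref{eq3.6b}) hold with $\mathbf{u}$ a solution of (\ref{eq3.1a})--(\ref{eq3.1c}); the whole family then converges.

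\textbf{Step 1: a priori estimates.} We take $\mathbf{u}_\varepsilon(t)$ as test function in the variational form of (\ref{eq1.2a})--(\ref{eq1.2d}). Using $b(\mathbf{u}_\varepsilon,\mathbf{u}_\varepsilon,\mathbf{u}_\varepsilon)=0$ and the uniform coercivity (\ref{eq1.1d}), we get $\sup_t|\mathbf{u}_\varepsilon(t)|^2+\alpha\|\mathbf{u}_\varepsilon\|^2_{L^2(0,T;V)}\le C\|\mathbf{f}\|^2_{L^2(0,T;V')}$ with $C$ independent of $\varepsilon$. For the time derivative we use the equation. In dimension $N=2$, Ladyzhenskaya's inequality gives $\|B(\mathbf{u}_\varepsilon)\|_{V'}\le c|\mathbf{u}_\varepsilon|\,\|\mathbf{u}_\varepsilon\|$, so $\mathbf{u}_\varepsilon'$ is bounded in $L^2(0,T;V')$. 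Hence $(\mathbf{u}_\varepsilon)$ is bounded in $\mathcal{W}(0,T)$; the hypotheses on $\mathbf{f}',\mathbf{f}(0)$ are not needed for this bound, only to strengthen it if required. Each component $u^k_\varepsilon$ is then bounded in $\mathcal{Y}(0,T)$.

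\textbf{Step 2: compactness.} Theorem \ref{th2.2}, applied componentwise, together with the compact embedding $\mathcal{W}(0,T)\hookrightarrow L^2(0,T;H)$, yields a subsequence $E'$ and $\mathbf{u}=(\mathbf{u}_0,\mathbf{u}_1,\mathbf{u}_2)\in\mathbb{E}_0^1$ such that:
\begin{itemize}
\item $\mathbf{u}_\varepsilon\to\mathbf{u}_0$ weakly in $\mathcal{W}(0,T)$ and strongly in $L^2(Q)^N$;
\item (\ref{eq3.6b}) holds.
\end{itemize}
Since $\mathcal{W}(0,T)\subset\mathcal{C}(0,T;H)$ continuously and linearly, $\mathbf{u}_\varepsilon(0)=0$ passes to the limit, giving $\mathbf{u}_0(0)=0$.

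We then verify $\mathbf{u}\in L^2(0,T;\mathbb{E}_0^\tau)$. First, $\mathbf{u}_0\in L^2(0,T;V)$ because $V$ is weakly closed. Next, $\operatorname{div}\mathbf{u}_\varepsilon=0$ gives $\sum_k \partial u^k_\varepsilon/\partial x_k=0$, and we test it against $\varepsilon\,\varphi^\varepsilon$ and $\varepsilon^2\varphi^\varepsilon$ with smooth periodic $\varphi$. Integrating by parts and passing to the limit, the test against $\varepsilon\varphi^\varepsilon$ with $\varphi$ independent of $z$ gives $\operatorname{div}_y\mathbf{u}_1=0$. The test against $\varepsilon^2\varphi^\varepsilon$ gives $\operatorname{div}_z\mathbf{u}_2=0$. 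Combined with $\sum_k\mathbb{D}_k\mathbf{u}^k=0$ and $\operatorname{div}\mathbf{u}_0=0$, this places $\mathbf{u}_1(x,t,\cdot,\tau)\in W_y$ and $\mathbf{u}_2\in W_z$.

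\textbf{Step 3: passage to the limit.} This is the main obstacle, because of the pressure. Oscillating test functions $\mathbf{\Phi}_\varepsilon$ as in (\ref{eq3.2b}) are not divergence free, so $p_\varepsilon$ does not drop out. We therefore restrict to $\mathbf{\Phi}=(\mathbf{\psi}_0,\mathbf{\psi}_1,\mathbf{\psi}_2)$ with $\mathbf{\psi}_0\in\mathcal{D}(Q)^N$, $\operatorname{div}\mathbf{\psi}_0=0$, and $\mathbf{\psi}_1,\mathbf{\psi}_2$ smooth with $\operatorname{div}_y\mathbf{\psi}_1=0$ and $\operatorname{div}_z\mathbf{\psi}_2=0$, so that $\operatorname{div}\mathbf{\Phi}_\varepsilon=\varepsilon(\operatorname{div}_x\mathbf{\psi}_1)^\varepsilon+\varepsilon(\operatorname{div}_y\mathbf{\psi}_2)^\varepsilon+\varepsilon^2(\operatorname{div}_x\mathbf{\psi}_2)^\varepsilon$. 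To control the pressure term $\int_Q p_\varepsilon\operatorname{div}\mathbf{\Phi}_\varepsilon$ we need a bound on $p_\varepsilon$. One route is to estimate $p_\varepsilon$ in $L^2(Q)$ via the Ne\v{c}as inequality, which requires $\mathbf{u}'_\varepsilon$ bounded in $L^2(0,T;L^2)$. Obtaining that bound is where the assumptions $\mathbf{f}'\in L^2(0,T;V')$ and $\mathbf{f}(0)\in H$ enter, by differentiating the equation in time as in \cite{bib11}. Once this holds, the pressure term is $O(\varepsilon)$.

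\textbf{Step 4: identification of the limit.} The time term $\int\langle\mathbf{u}'_\varepsilon,\mathbf{\Phi}_\varepsilon\rangle$ is integrated by parts in $t$. The $\partial_t$ of the oscillating terms produces $\varepsilon\cdot\varepsilon^{-1}(\partial_\tau\psi_1)^\varepsilon$, which is tested against $\mathbf{u}_\varepsilon\to\mathbf{u}_0$ strongly; its limit vanishes because $\int_Y\psi_1\,dy=0$ and $\mathbf{u}_0$ does not depend on $y$. The term $\varepsilon^2(\cdot)^\varepsilon$ is handled similarly. Lemma \ref{lem3.3} passes to the limit in the $a^\varepsilon$ and $b$ terms. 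This gives (\ref{eq3.1c}) for such $\mathbf{\Phi}$, and density (via Lemma \ref{lem3.1} restricted to divergence-free fields) extends it to all of $L^2(0,T;\mathbb{E}^\tau_0)$. Uniqueness (Lemma \ref{lem3.2}) concludes.
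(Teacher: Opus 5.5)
Your argument is correct in substance, but it handles the pressure differently from the paper. The paper imports the $L^2(Q)$ bound on $(p_\varepsilon)$ from \cite[Proposition 2.1]{bib11} and extracts a reiterated limit $p_\varepsilon\to p$ in $L^2(Q)$ $\Sigma$-weak. It then passes to the limit in (\ref{eq3.8a}) with \emph{arbitrary} triples $\mathbf{\Phi}\in E_0^\infty$, for which $\mathrm{div}\,\mathbf{\Phi}_\varepsilon$ contains the order-one oscillating terms $(\mathrm{div}_y\mathbf{\psi}_1)^\varepsilon+(\mathrm{div}_z\mathbf{\psi}_2)^\varepsilon$. The limit identity (\ref{eq3.8b}) therefore carries the term $\int p\,(\mathrm{div}_x\mathbf{v}_0+\mathrm{div}_y\mathbf{v}_1+\mathrm{div}_z\mathbf{v}_2)$ and extends to all of $\mathbb{E}_0^1$ by Lemma \ref{lem3.1}. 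The pressure disappears only at the end, when $\mathbf{v}$ is restricted to $L^2(0,T;\mathbb{E}_0^\tau)$.

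You restrict to scale-wise solenoidal triples from the outset, so $\mathrm{div}\,\mathbf{\Phi}_\varepsilon=O(\varepsilon)$ uniformly, only the $L^2(Q)$ bound on $p_\varepsilon$ is used, and $p$ is never identified. This has two costs. First, you need a density result that is \emph{not} Lemma \ref{lem3.1}: smooth triples with $\mathrm{div}\,\mathbf{\psi}_0=\mathrm{div}_y\mathbf{\psi}_1=\mathrm{div}_z\mathbf{\psi}_2=0$ must be dense in $L^2(0,T;\mathbb{E}_0^\tau)$. This is true, since $\mathcal{V}$ is dense in $V$ by definition and Fourier truncation in $y$, $z$ commutes with $\mathrm{div}_y$, $\mathrm{div}_z$, but it must be argued. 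Second, you lose the three-scale identity (\ref{eq3.8b}), which the paper reuses in Theorem \ref{th3.2} to obtain the equation for $(\mathbf{u}_0,p_0)$. On the other hand, your route can be pushed further. Since $N=2$, writing the correctors as $\nabla^\perp$ of periodic stream functions gives exactly solenoidal test fields that differ from (\ref{eq3.2b}) by $O(\varepsilon)$ in $W^{1,\infty}(Q)$. The pressure term then vanishes identically, and no estimate on $p_\varepsilon$ is needed for this theorem.

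Two local corrections are needed.

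\textbf{Divergence constraints.} Testing $\mathrm{div}\,\mathbf{u}_\varepsilon=0$ against $\varepsilon\varphi^\varepsilon$ or $\varepsilon^2\varphi^\varepsilon$ yields nothing. After integration by parts the limits are $-\int\mathbf{u}_0\cdot\nabla_y\varphi$ and $-\int\mathbf{u}_0\cdot\nabla_z\varphi$, which vanish identically because $\mathbf{u}_0$ does not depend on $(y,z)$. The constraints come, as in the paper, from (\ref{eq3.6b}) with the unscaled test $\varphi^\varepsilon$: $0=\mathrm{div}\,\mathbf{u}_\varepsilon\to\mathrm{div}_x\mathbf{u}_0+\mathrm{div}_y\mathbf{u}_1+\mathrm{div}_z\mathbf{u}_2$. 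Averaging over $Z$ removes the last term, $\mathrm{div}\,\mathbf{u}_0=0$ then gives $\mathrm{div}_y\mathbf{u}_1=0$, and hence $\mathrm{div}_z\mathbf{u}_2=0$.

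\textbf{The $\mathcal{Y}(0,T)$ bound.} A bound on $\mathbf{u}_\varepsilon'$ in $L^2(0,T;V')$ does not bound $\partial u^k_\varepsilon/\partial t$ in $L^2(0,T;H^{-1}(\Omega))$; this is exactly the pressure difficulty. So the componentwise $\mathcal{Y}(0,T)$ bound you feed into Theorem \ref{th2.2} must come from your Step 3 estimate on $\mathbf{u}_\varepsilon'$. Alternatively, you can bypass it by noting that the strong $L^2(Q)^N$ convergence already makes the limit of $\mathbf{u}_\varepsilon$ independent of $(y,\tau,z)$.
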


\begin{proof}[Preuve]
Let $E$ be a fundamental sequence. By \cite[Proposition 2.1]{bib11}, the
sequences $\left( \mathbf{u}_{\varepsilon }\right) _{\varepsilon \in E}$ and 
$\left( p_{\varepsilon }\right) _{\varepsilon \in E}$ are bounded in $%
\mathcal{W}\left( 0,T\right) $ and $L^{2}\left( 0,T;L^{2}\left( \Omega ;%
\mathbb{R}\right) \right) $, respectively. Thus, in virtue of Theorem \ref%
{th2.1} and Theorem \ref{th2.2}, there exists a subsequence $E^{\prime }$
extracted from $E$ and functions $\mathbf{u}_{0}=\left( u_{0}^{k}\right)
_{1\leq k\leq N}\in \mathcal{W}\left( 0,T\right) $, $\mathbf{u}_{1}=\left(
u_{1}^{k}\right) _{1\leq k\leq N}\in L^{2}\left( Q;L_{per}^{2}\left( 
\mathcal{T};H_{\#}^{1}\left( Y;\mathbb{R}\right) \right) \right) ^{N}$,

\noindent $\mathbf{u}_{2}=\left( u_{2}^{k}\right) _{1\leq k\leq N}\in
L^{2}\left( Q;L_{per}^{2}\left( \mathcal{T};L_{per}^{2}\left(
Y;H_{\#}^{1}\left( Z\right) \right) \right) \right) ^{N}$, $p\in L^{2}\left(
Q;L_{per}^{2}\left( Y\times \mathcal{T}\times Z\right) \right) $ such that
as $E^{\prime }\ni \varepsilon \rightarrow 0$, we have (\ref{eq3.6a})-(\ref%
{eq3.6b}) and 
\begin{equation}
p_{\varepsilon }\rightarrow p\text{ reiteratively in }L^{2}\left( Q\right) 
\text{ }\Sigma \text{-weak.}  \label{eq3.6c}
\end{equation}%
As the space $\mathcal{W}\left( 0,T\right) $ is compactly embedded in $%
L^{2}\left( Q\right) ^{N}$, one can extract the subsequence $E^{\prime }$
such that as $E^{\prime }\ni \varepsilon \rightarrow 0$, 
\begin{equation}
\mathbf{u}_{\varepsilon }\rightarrow \mathbf{u}_{0}\text{ in }L^{2}\left(
Q\right) ^{N}\text{. }  \label{eq3.6d}
\end{equation}%
To end this proof, it remains to show that $\mathbf{u=}\left( \mathbf{u}_{0},%
\mathbf{u}_{1},\mathbf{u}_{2}\right) $ verifies (\ref{eq3.1a})-(\ref{eq3.1c}%
). Hence, one could conclude that (\ref{eq3.6a})-(\ref{eq3.6b} hold as $%
\varepsilon \rightarrow 0$ in virtue of Lemma \ref{lem3.2}. For the proof of
(\ref{eq3.1a}), let us observe that $\func{div}_{x}\mathbf{u}_{\varepsilon
}\rightarrow \func{div}_{x}\mathbf{u}_{0}+\func{div}_{y}\mathbf{u}_{1}+\func{%
div}_{z}\mathbf{u}_{2}$ reiteratively in $L^{2}\left( Q\right) $ $\Sigma $%
-weak as $E^{\prime }\ni \varepsilon \rightarrow 0$ and $\func{div}_{x}%
\mathbf{u}_{\varepsilon }=\func{div}_{x}\mathbf{u}_{0}=0$. Moreover, by part
(i) of Proposition \ref{pr2.4}, $\func{div}_{x}\mathbf{u}_{\varepsilon }$
weakly two-scale converges to $\func{div}_{x}\mathbf{u}_{0}+\func{div}_{y}%
\mathbf{u}_{1}$ in $L^{2}\left( Q\right) $, as $E^{\prime }\ni \varepsilon
\rightarrow 0$. Thus, $\func{div}_{x}\mathbf{u}_{0}=\func{div}_{y}\mathbf{u}%
_{1}=\func{div}_{z}\mathbf{u}_{2}=0$, and (\ref{eq3.1a}) follows. One can
easily show as in \cite[Proof of Theorem 2.4]{bib11} that (\ref{eq3.1b})$%
\mathbf{\ }$holds. Let us check that $\mathbf{u=}\left( \mathbf{u}_{0},%
\mathbf{u}_{1},\mathbf{u}_{2}\right) $ verifies (\ref{eq3.1c}). For $%
\varepsilon >0$, let $\mathbf{\Phi }_{\varepsilon }$ be defined by (\ref%
{eq3.2b}) with (\ref{eq3.2a}). We have:%
\begin{equation}
\frac{\partial \Phi _{\varepsilon }^{k}}{\partial t}=\frac{\partial \psi
_{0}^{k}}{\partial t}+\varepsilon \left( \frac{\partial \psi _{1}^{k}}{%
\partial t}\right) ^{\varepsilon }+\left( \frac{\partial \psi _{1}^{k}}{%
\partial \tau }\right) ^{\varepsilon }+\varepsilon ^{2}\left( \frac{\partial
\psi _{2}^{k}}{\partial t}\right) ^{\varepsilon }+\varepsilon \left( \frac{%
\partial \psi _{2}^{k}}{\partial \tau }\right) ^{\varepsilon }\text{,}
\label{eq3.7a}
\end{equation}%
\begin{equation}
\frac{\partial \Phi _{\varepsilon }^{k}}{\partial x_{i}}=\frac{\partial \psi
_{0}^{k}}{\partial x_{i}}+\varepsilon \left( \frac{\partial \psi _{1}^{k}}{%
\partial x_{i}}\right) ^{\varepsilon }+\left( \frac{\partial \psi _{1}^{k}}{%
\partial y_{i}}\right) ^{\varepsilon }+\varepsilon ^{2}\left( \frac{\partial
\psi _{2}^{k}}{\partial x_{i}}\right) ^{\varepsilon }+\varepsilon \left( 
\frac{\partial \psi _{2}^{k}}{\partial y_{i}}\right) ^{\varepsilon }+\left( 
\frac{\partial \psi _{2}^{k}}{\partial z_{i}}\right) ^{\varepsilon }
\label{eq3.7b}
\end{equation}%
$\left( 1\leq i,k\leq N\right) $ and 
\begin{equation}
\func{div}_{x}\mathbf{\Phi }_{\varepsilon }=\func{div}_{x}\mathbf{\psi }%
_{0}+\varepsilon \left( \func{div}_{x}\mathbf{\psi }_{1}\right)
^{\varepsilon }+\left( \func{div}_{y}\mathbf{\psi }_{1}\right) ^{\varepsilon
}+\varepsilon ^{2}\left( \func{div}_{x}\mathbf{\psi }_{2}\right)
^{\varepsilon }+\varepsilon \left( \func{div}_{y}\mathbf{\psi }_{2}\right)
^{\varepsilon }+\left( \func{div}_{z}\mathbf{\psi }_{2}\right) ^{\varepsilon
}\text{.}  \label{eq3.7c}
\end{equation}%
Multiplying (\ref{eq1.2a}) by $\mathbf{\Phi }_{\varepsilon }$, one has%
\begin{equation}
\begin{array}{c}
\int_{0}^{T}\left\langle \mathbf{u}_{\varepsilon }^{\prime }\left( t\right) ,%
\mathbf{\Phi }_{\varepsilon }\left( t\right) \right\rangle
dt+\int_{0}^{T}a^{\varepsilon }\left( \mathbf{u}_{\varepsilon }\left(
t\right) ,\mathbf{\Phi }_{\varepsilon }\left( t\right) \right)
dt+\int_{0}^{T}b\left( \mathbf{u}_{\varepsilon }\left( t\right) ,\mathbf{u}%
_{\varepsilon }\left( t\right) ,\mathbf{\Phi }_{\varepsilon }\left( t\right)
\right) dt \\ 
-\int_{Q}p_{\varepsilon }div_{x}\mathbf{\Phi }_{\varepsilon
}dxdt=\int_{0}^{T}\left\langle \mathbf{f}\left( t\right) ,\mathbf{\Phi }%
_{\varepsilon }\left( t\right) \right\rangle dt\text{.}%
\end{array}
\label{eq3.8a}
\end{equation}%
Let us observe that by (\ref{eq3.4}) one easily has $\mathbf{\Phi }%
_{\varepsilon }\rightarrow \mathbf{\psi }_{0}$ in $L^{2}\left(
0,T;H_{0}^{1}\left( \Omega \right) ^{N}\right) $-weak as $\varepsilon
\rightarrow 0$. Thus, using (\ref{eq3.7a})-(\ref{eq3.7c}), a passage to the
limit in (\ref{eq3.8a}) as $E^{\prime }\ni \varepsilon \rightarrow 0$ leads
to 
\begin{equation*}
\begin{array}{c}
\int_{0}^{T}\left\langle \mathbf{u}_{0}^{\prime }\left( t\right) ,\mathbf{%
\psi }_{0}\left( t\right) \right\rangle dt+\int_{0}^{T}a_{\Omega }\left( 
\mathbf{u}\left( t\right) ,\mathbf{\Phi }\left( t\right) \right)
dt+\int_{0}^{T}b\left( \mathbf{u}_{0}\left( t\right) ,\mathbf{u}_{0}\left(
t\right) ,\mathbf{\psi }_{0}\left( t\right) \right) dt \\ 
-\int \int \int \int_{Q}p\left( div_{x}\mathbf{\psi }_{0}+div_{y}\mathbf{%
\psi }_{1}+div_{z}\mathbf{\psi }_{2}\right) dxdtdydzd\tau \\ 
=\int_{0}^{T}\left( \mathbf{f}\left( t\right) ,\mathbf{\psi }_{0}\left(
t\right) \right) dt\text{,}%
\end{array}%
\end{equation*}%
having taken in account (\ref{eq3.6c})-(\ref{eq3.6d}) and Lemma \ref{lem3.3}%
. But, by continuity we have 
\begin{equation}
\begin{array}{c}
\int_{0}^{T}\left\langle \mathbf{u}_{0}^{\prime }\left( t\right) ,\mathbf{v}%
_{0}\left( t\right) \right\rangle dt+\int_{0}^{T}a_{\Omega }\left( \mathbf{u}%
\left( t\right) ,\mathbf{v}\left( t\right) \right) dt+\int_{0}^{T}b\left( 
\mathbf{u}_{0}\left( t\right) ,\mathbf{u}_{0}\left( t\right) ,\mathbf{v}%
_{0}\left( t\right) \right) dt \\ 
-\int \int_{Q}p\left( div_{x}\mathbf{v}_{0}+div_{y}\mathbf{v}_{1}+div_{z}%
\mathbf{v}_{2}\right) dxdtdydzd\tau \\ 
=\int_{0}^{T}\left( \mathbf{f}\left( t\right) ,\mathbf{v}_{0}\left( t\right)
\right) dt\text{,}%
\end{array}
\label{eq3.8b}
\end{equation}%
for all $\mathbf{v=}\left( \mathbf{v}_{0},\mathbf{v}_{1},\mathbf{v}%
_{2}\right) \in \mathbb{E}_{0}^{1}$, thanks to Lemma \ref{lem3.1}. Finally,
taking a particular $\mathbf{v=}\left( \mathbf{v}_{0},\mathbf{v}_{1},\mathbf{%
v}_{2}\right) \in L^{2}\left( 0,T;\mathbb{E}_{0}^{\tau }\right) $ in (\ref%
{eq3.8b}) leads to (\ref{eq3.1c}). The theorem follows.
\end{proof}

Now, let us put%
\begin{equation*}
E_{y}=L_{per}^{2}\left( \mathcal{T};H_{\#}^{1}\left( Y;\mathbb{R}\right)
^{N}\right) \text{,}
\end{equation*}%
\begin{equation*}
E_{z}=L_{per}^{2}\left( \mathcal{T};L_{per}^{2}\left( Y;H_{\#}^{1}\left( Z;%
\mathbb{R}\right) ^{N}\right) \right) \text{.}
\end{equation*}%
Provided with the norm 
\begin{equation*}
\left\Vert \left( \mathbf{v}_{1},\mathbf{v}_{2}\right) \right\Vert
_{E_{y}\times E_{z}}=\left[ \left\Vert \mathbf{v}_{1}\right\Vert
_{L_{per}^{2}\left( \mathcal{T};H_{\#}^{1}\left( Y;\mathbb{R}\right)
^{N}\right) }^{2}+\left\Vert \mathbf{v}_{2}\right\Vert _{L_{per}^{2}\left( 
\mathcal{T};L_{per}^{2}\left( Y;H_{\#}^{1}\left( Z;\mathbb{R}\right)
^{N}\right) \right) }^{2}\right] ^{\frac{1}{2}}
\end{equation*}

for $\left( \mathbf{v}_{1},\mathbf{v}_{2}\right) \in E_{y}\times E_{z}$, $%
E_{y}\times E_{z}$ is a Hilbert. For $\left( \mathbf{u}_{1},\mathbf{u}%
_{2}\right) $ and $\left( \mathbf{v}_{1},\mathbf{v}_{2}\right) \in
E_{y}\times E_{z}$ with $\mathbf{u}_{1}\mathbf{=}\left( u_{1}^{k}\right) $, $%
\mathbf{u}_{2}\mathbf{=}\left( v_{2}^{k}\right) $, $\mathbf{v}_{1}=\left(
v_{1}^{k}\right) $ and $\mathbf{v}_{2}=\left( v_{2}^{k}\right) $, we set%
\begin{equation*}
a\left( \left( \mathbf{u}_{1},\mathbf{u}_{2}\right) ,\left( \mathbf{v}_{1},%
\mathbf{v}_{2}\right) \right) =\sum_{i,j,k=1}^{N}\int \int \int_{Y\times 
\mathcal{T}\times Z}a_{ij}\left( \frac{\partial u_{1}^{k}}{\partial y_{j}}+%
\frac{\partial u_{2}^{k}}{\partial z_{j}}\right) \left( \frac{\partial
v_{1}^{k}}{\partial y_{i}}+\frac{\partial v_{2}^{k}}{\partial z_{i}}\right)
dydzd\tau \text{.}
\end{equation*}%
This defines a bilinear form on $\left( E_{y}\times E_{z}\right) \times
\left( E_{y}\times E_{z}\right) $ which is continuous symmetric and $%
E_{y}\times E_{z}$-coercive with 
\begin{equation}
a\left( \left( \mathbf{v}_{1},\mathbf{v}_{2}\right) ,\left( \mathbf{v}_{1},%
\mathbf{v}_{2}\right) \right) \geq \alpha \left\Vert \left( \mathbf{v}_{1},%
\mathbf{v}_{2}\right) \right\Vert _{E_{y}\times E_{z}}^{2}  \label{eq3.9a}
\end{equation}%
for all $\left( \mathbf{v}_{1},\mathbf{v}_{2}\right) \in E_{y}\times E_{z}$,
where $\alpha $ is the constant in (\ref{eq1.1c}).

For $1\leq i,k\leq N$, we consider the variational problem%
\begin{equation}
\left\{ 
\begin{array}{c}
\left( \mathbf{\chi }_{ik},\mathbf{\eta }_{ik}\right) \in L_{per}^{2}\left( 
\mathcal{T};W_{y}\right) \times L_{per}^{2}\left( \mathcal{T}%
;L_{per}^{2}\left( Y;W_{z}\right) \right) :\qquad \qquad \qquad \quad \qquad
\qquad \\ 
a\left( \left( \mathbf{\chi }_{ik},\mathbf{\eta }_{ik}\right) ,\left( 
\mathbf{v}_{1},\mathbf{v}_{2}\right) \right) =\sum_{l=1}^{N}\int \int
\int_{Y\times \mathcal{T}\times Z}a_{il}\left( \frac{\partial v_{1}^{k}}{%
\partial y_{l}}+\frac{\partial v_{2}^{k}}{\partial z_{l}}\right) dydzd\tau
\qquad \qquad \qquad \\ 
\text{for all }\left( \mathbf{v}_{1},\mathbf{v}_{2}\right) \in
L_{per}^{2}\left( \mathcal{T};W_{y}\right) \times L_{per}^{2}\left( \mathcal{%
T};L_{per}^{2}\left( Y;W_{z}\right) \right) \qquad \qquad \qquad \qquad \quad%
\end{array}%
\right.  \label{eq3.9b}
\end{equation}%
which defines in a unique manner $\left( \mathbf{\chi }_{ik},\mathbf{\eta }%
_{ik}\right) $ with $\mathbf{\chi }_{ik}=\left( \mathcal{\chi }%
_{ik}^{l}\right) $ and $\mathbf{\eta }_{ik}=\left( \mathcal{\eta }%
_{ik}^{l}\right) $.

\begin{proposition}
\label{pr3.1} Under the hypotheses of Theorem \ref{th3.1}, we have 
\begin{equation}
\mathbf{u}_{1}\left( x,t,y,\tau \right) =-\sum_{i,k=1}^{N}\frac{\partial
u_{0}^{k}}{\partial x_{i}}\left( x,t\right) \mathbf{\chi }_{ik}\left( y,\tau
\right)  \label{eq3.10a}
\end{equation}%
for almost all $\left( x,t\right) \in Q$ and for almost all $\left( y,\tau
\right) \in Y\times \mathcal{T}$;%
\begin{equation}
\mathbf{u}_{2}\left( x,t,y,\tau ,z\right) =-\sum_{i,k=1}^{N}\frac{\partial
u_{0}^{k}}{\partial x_{i}}\left( x,t\right) \mathbf{\eta }_{ik}\left( y,\tau
,z\right)  \label{eq3.10b}
\end{equation}%
for almost all $\left( x,t\right) \in Q$ and for almost all $\left( y,\tau
,z\right) \in Y\times \mathcal{T}\times Z$.
\end{proposition}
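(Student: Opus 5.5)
In (\ref{eq3.1c}) I would take test functions with $\mathbf{v}_{0}=0$. This kills the time-derivative, trilinear and right-hand-side terms, and leaves a cell problem for $(\mathbf{u}_{1},\mathbf{u}_{2})$ with $\nabla\mathbf{u}_{0}$ as a parameter. Uniqueness in (\ref{eq3.9b}) then identifies $(\mathbf{u}_{1},\mathbf{u}_{2})$ with the stated linear combination of the correctors $(\mathbf{\chi}_{ik},\mathbf{\eta}_{ik})$.

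Concretely, I fix $\mathbf{v}=(0,\varphi\mathbf{v}_{1},\varphi\mathbf{v}_{2})$ in (\ref{eq3.1c}), where $\varphi\in\mathcal{D}(Q;\mathbb{R})$, $\mathbf{v}_{1}\in L_{per}^{2}(\mathcal{T};W_{y})$ and $\mathbf{v}_{2}\in L_{per}^{2}(\mathcal{T};L_{per}^{2}(Y;W_{z}))$. This $\mathbf{v}$ lies in $L^{2}(0,T;\mathbb{E}_{0}^{\tau})$ because the divergence constraints only involve $y$ and $z$. Since $\varphi$ is arbitrary, a standard localization argument gives, for a.e. $(x,t)\in Q$,
\begin{equation*}
\sum_{i,j,k=1}^{N}\int\int\int_{Y\times\mathcal{T}\times Z}a_{ij}\Big(\frac{\partial u_{0}^{k}}{\partial x_{j}}+\frac{\partial u_{1}^{k}}{\partial y_{j}}+\frac{\partial u_{2}^{k}}{\partial z_{j}}\Big)\Big(\frac{\partial v_{1}^{k}}{\partial y_{i}}+\frac{\partial v_{2}^{k}}{\partial z_{i}}\Big)dydzd\tau=0 .
\end{equation*}
(I read the form $a_{\Omega}$ with $\mathbb{D}_{j}\mathbf{u}^{k}\,\mathbb{D}_{i}\mathbf{v}^{k}$, correcting the evident typo.) To make "a.e. $(x,t)$" independent of the choice of $(\mathbf{v}_{1},\mathbf{v}_{2})$, I would use the separability of the test space: take a countable dense subset of the test functions and pass to the whole space by continuity. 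Along the way I use the Fubini-type facts that $\mathbf{u}_{1}(x,t)\in L_{per}^{2}(\mathcal{T};W_{y})$ and $\mathbf{u}_{2}(x,t)\in L_{per}^{2}(\mathcal{T};L_{per}^{2}(Y;W_{z}))$ for a.e. $(x,t)$; these follow from (\ref{eq3.1a}).

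The identity above says that, for a.e. fixed $(x,t)$, the pair $(\mathbf{u}_{1}(x,t),\mathbf{u}_{2}(x,t))$ solves
\begin{equation*}
a\big((\mathbf{u}_{1},\mathbf{u}_{2}),(\mathbf{v}_{1},\mathbf{v}_{2})\big)=-\sum_{j,k=1}^{N}\frac{\partial u_{0}^{k}}{\partial x_{j}}(x,t)\sum_{i=1}^{N}\int\int\int_{Y\times\mathcal{T}\times Z}a_{ij}\Big(\frac{\partial v_{1}^{k}}{\partial y_{i}}+\frac{\partial v_{2}^{k}}{\partial z_{i}}\Big)dydzd\tau
\end{equation*}
for all test pairs $(\mathbf{v}_{1},\mathbf{v}_{2})$. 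The right-hand side of (\ref{eq3.9b}) is the elementary load $\sum_{l}\int a_{il}(\partial_{y_{l}}v_{1}^{k}+\partial_{z_{l}}v_{2}^{k})$. After relabelling indices and using the symmetry (\ref{eq1.1b}), the load above is exactly $-\sum_{i,k}\frac{\partial u_{0}^{k}}{\partial x_{i}}$ times these elementary loads. By linearity, the pair $-\sum_{i,k}\frac{\partial u_{0}^{k}}{\partial x_{i}}(x,t)(\mathbf{\chi}_{ik},\mathbf{\eta}_{ik})$ solves the same problem. Both pairs lie in the closed subspace $L_{per}^{2}(\mathcal{T};W_{y})\times L_{per}^{2}(\mathcal{T};L_{per}^{2}(Y;W_{z}))$ of $E_{y}\times E_{z}$, and the form is coercive there by (\ref{eq3.9a}). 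Lax--Milgram uniqueness therefore gives (\ref{eq3.10a}) and (\ref{eq3.10b}) for a.e. $(x,t)$, and then a.e. in the remaining variables.

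I expect the main obstacle to be the index bookkeeping in matching the two loads. One must check carefully which index of $a_{ij}$ multiplies which derivative, and that the vector index $k$ in (\ref{eq3.9b}) selects only the $k$-th component of the test pair, so that the sum over $k$ decouples correctly. The measurability and localization step is routine, but it should be stated precisely through the countable dense family.
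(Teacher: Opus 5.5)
Your proposal is correct and follows the paper's proof essentially step for step. You test (\ref{eq3.1c}) with $\mathbf{v}=(0,\varphi\mathbf{w}_{1},\varphi\mathbf{w}_{2})$ to obtain the cell equation (\ref{eq3.10c}) for a.e. $(x,t)$, then conclude from the coercivity (\ref{eq3.9a}) together with the observation that $-\sum_{i,k}\frac{\partial u_{0}^{k}}{\partial x_{i}}(\mathbf{\chi}_{ik},\mathbf{\eta}_{ik})$ solves the same problem; your countable-dense-family argument for a uniform null set and your explicit use of $a_{ij}=a_{ji}$ to match the loads simply spell out details the paper leaves implicit.
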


\begin{proof}
In (\ref{eq3.1c}), we take a particular test function $\mathbf{v=}\left( 
\mathbf{v}_{0},\mathbf{v}_{1},\mathbf{v}_{2}\right) $ with $\mathbf{v}_{0}=0$%
, $\mathbf{v}_{1}=\varphi \otimes \mathbf{w}_{1}$ and $\mathbf{v}%
_{2}=\varphi \otimes \mathbf{w}_{2}$, where $\varphi \in \mathcal{D}\left( Q;%
\mathbb{R}\right) $ and $\left( \mathbf{w}_{1},\mathbf{w}_{2}\right) \in
L_{per}^{2}\left( \mathcal{T};W_{y}\right) \times L_{per}^{2}\left( \mathcal{%
T};L_{per}^{2}\left( Y;W_{z}\right) \right) $. This leads to 
\begin{equation}
\left\{ 
\begin{array}{c}
a\left( \left( \mathbf{u}_{1}\left( x,t\right) ,\mathbf{u}_{2}\left(
x,t\right) \right) ,\left( \mathbf{w}_{1},\mathbf{w}_{2}\right) \right)
=-\sum_{i,j,k=1}^{N}\frac{\partial u_{0}^{k}}{\partial x_{j}}\left(
x,t\right) \int \int \int_{Y\times \mathcal{T}\times Z}a_{ij}\left( \frac{%
\partial w_{1}^{k}}{\partial y_{i}}+\frac{\partial w_{2}^{k}}{\partial z_{i}}%
\right) dydzd\tau \\ 
\text{for all }\left( \mathbf{w}_{1},\mathbf{w}_{2}\right) \in
L_{per}^{2}\left( \mathcal{T};W_{y}\right) \times L_{per}^{2}\left( \mathcal{%
T};L_{per}^{2}\left( Y;W_{z}\right) \right) \text{,}\qquad \qquad%
\end{array}%
\right.  \label{eq3.10c}
\end{equation}%
for almost all $\left( x,t\right) \in Q$. But, in virtue of (\ref{eq3.9a})
the couple $\left( \mathbf{u}_{1}\left( x,t\right) ,\mathbf{u}_{2}\left(
x,t\right) \right) $ (for fixed $\left( x,t\right) \in Q$) is the unique
function in $L_{per}^{2}\left( \mathcal{T};W_{y}\right) \times
L_{per}^{2}\left( \mathcal{T};L_{per}^{2}\left( Y;W_{z}\right) \right) $
satisfying (\ref{eq3.10c}). However, by (\ref{eq3.9b}) one easily observes
that $\left( \mathbf{z}_{1}\left( x,t\right) ,\mathbf{z}_{2}\left(
x,t\right) \right) $ with $\mathbf{z}_{1}\left( x,t\right) =-\sum_{i,k=1}^{N}%
\frac{\partial u_{0}^{k}}{\partial x_{i}}\left( x,t\right) \mathbf{\chi }%
_{ik}$ and $\mathbf{z}_{2}\left( x,t\right) =-\sum_{i,k=1}^{N}\frac{\partial
u_{0}^{k}}{\partial x_{i}}\left( x,t\right) \mathbf{\eta }_{ik}$, also
verifies the variational equation (\ref{eq3.10c}). Hence, (\ref{eq3.10a})-(%
\ref{eq3.10b}) follows.
\end{proof}

\subsection{The macroscopic homogenized equations}

Our goal here is to derive de macroscopic homogenized model verified by the
couple $\left( \mathbf{u}_{0},p_{0}\right) $, where $\mathbf{u}_{0}$ is the
limit in (\ref{eq3.6a}) and $p_{0}$ is the mean of $p$ (in (\ref{eq3.6c})),
i.e., $p_{0}\left( x,t\right) =\int \int \int_{Y\times \mathcal{T}\times
Z}p\left( x,t,y,\tau ,z\right) dydzd\tau $ for $\left( x,t\right) \in Q$.

For $1\leq i,j,k,h\leq N$, we set%
\begin{equation}
q_{ijkh}=\delta _{kh}\int \int_{Y\times Z}a_{ij}dydz-\sum_{l=1}^{N}\int \int
\int_{Y\times \mathcal{T}\times Z}a_{il}\left( \frac{\partial \mathcal{\chi }%
_{jh}^{k}}{\partial y_{l}}+\frac{\partial \mathcal{\eta }_{jh}^{k}}{\partial
z_{l}}\right) dydzd\tau \text{,}  \label{eq3.11a}
\end{equation}%
where $\mathbf{\chi }_{jh}=\left( \mathcal{\chi }_{jh}^{k}\right) $ and $%
\mathbf{\eta }_{jh}=\left( \mathcal{\eta }_{jh}^{k}\right) $ are defined by (%
\ref{eq3.9b}). To the coefficients $q_{ijkh}$, we associate the differential
operator $\mathcal{Q}$ on $Q$ sending $\mathcal{D}^{\prime }\left( Q\right)
^{N}$ to $\mathcal{D}^{\prime }\left( Q\right) ^{N}$ in the following manner:%
\begin{equation}
\left\{ 
\begin{array}{c}
\text{for }\mathbf{z=}\left( z^{h}\right) \in \mathcal{D}^{\prime }\left(
Q\right) ^{N}\qquad \qquad \qquad \qquad \\ 
\left( \mathcal{Q}\mathbf{z}\right) ^{k}=-\sum_{i,j,h=1}^{N}q_{ijkh}\frac{%
\partial ^{2}z^{h}}{\partial x_{i}\partial x_{j}}\quad \left( 1\leq k\leq
N\right) \text{.}%
\end{array}%
\right.  \label{eq3.11b}
\end{equation}%
Next, we consider the Cauchy-Dirichlet boundary value problem%
\begin{equation}
\frac{\partial \mathbf{u}_{0}}{\partial t}+\mathcal{Q}\mathbf{u}%
_{0}+\sum_{j=1}^{N}u_{0}^{j}\frac{\partial \mathbf{u}_{0}}{\partial x_{j}}+%
\mathbf{grad}p_{0}=\mathbf{f}\text{ in }Q  \label{eq3.12a}
\end{equation}%
\begin{equation}
\func{div}\mathbf{u}_{0}=0\text{ in }Q  \label{eq3.12b}
\end{equation}%
\begin{equation}
\mathbf{u}_{0}=0\text{ on }\partial \Omega \times ]0,T[  \label{eq3.12c}
\end{equation}%
\begin{equation}
\mathbf{u}_{0}\left( 0\right) =0\text{\ in }\Omega \text{.}  \label{eq3.12d}
\end{equation}

\begin{proposition}
\label{pr3.2} Suppose $N=2$ and the hypotheses of Theorem \ref{th3.1} are
satisfied. Then, the boundary value problem (\ref{eq3.12a})-(\ref{eq3.12d})
admits at most one solution $\left( \mathbf{u}_{0},p_{0}\right) $ with $%
\mathbf{u}_{0}\in \mathcal{W}\left( 0,T\right) $ and $p_{0}\in L^{2}\left(
0,T;L^{2}\left( \Omega ;\mathbb{R}\right) \mathfrak{/}\mathbb{R}\right) $.
\end{proposition}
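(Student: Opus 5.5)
This is the classical Lions--Temam uniqueness argument for the two-dimensional Navier--Stokes equations, with the Laplacian replaced by the homogenized operator $\mathcal{Q}$. The only new ingredient is that $\mathcal{Q}$ must be coercive on $V$; we obtain this from the structure of the limit problem (\ref{eq3.1c}) rather than from the coefficients $q_{ijkh}$ directly.

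\emph{Step 1: energy form of $\mathcal{Q}$.} For $\mathbf{v}_{0}\in V$, let $\mathbf{v}_{1},\mathbf{v}_{2}$ be given by (\ref{eq3.10a})--(\ref{eq3.10b}) with $u_{0}^{k}$ replaced by $v_{0}^{k}$. Using the cell problem (\ref{eq3.9b}) and the definition (\ref{eq3.11a}), we check that
\begin{equation*}
\sum_{i,j,k,h}\int_{\Omega }q_{ijkh}\frac{\partial v_{0}^{h}}{\partial x_{j}}\frac{\partial w^{k}}{\partial x_{i}}dx=a_{\Omega }\left( \left( \mathbf{v}_{0},\mathbf{v}_{1},\mathbf{v}_{2}\right) ,\left( \mathbf{w},\mathbf{w}_{1},\mathbf{w}_{2}\right) \right)
\end{equation*}
for every $\mathbf{w}\in H_{0}^{1}(\Omega)^{N}$ and any admissible $(\mathbf{w}_{1},\mathbf{w}_{2})$. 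The test pair may be taken arbitrary here because the cell equations kill the correctors' contribution. Choosing $\mathbf{w}=\mathbf{v}_{0}$ and the matching correctors, the ellipticity (\ref{eq1.1c}) together with the fact that the oscillating correctors have zero mean gives
\begin{equation*}
\left\langle \mathcal{Q}\mathbf{v}_{0},\mathbf{v}_{0}\right\rangle \geq \alpha \left\Vert \mathbf{v}_{0}\right\Vert ^{2}.
\end{equation*}
The cross terms vanish because $\int_{Y}\nabla _{y}\mathbf{v}_{1}dy=0$ and $\int_{Z}\nabla _{z}\mathbf{v}_{2}dz=0$. Hence the bilinear form $q(\mathbf{v},\mathbf{w})=\langle \mathcal{Q}\mathbf{v},\mathbf{w}\rangle $ is continuous and $V$-coercive with constant $\alpha $. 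I expect this to be the main point requiring care. It should follow the same computation as in \cite{bib11}, \cite{bib12}, namely the symmetry of $a_{\Omega }$ and Remark \ref{rem3.1}.

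\emph{Step 2: difference equation.} Let $(\mathbf{u}_{0},p_{0})$ and $(\mathbf{u}_{0}^{\ast },p_{0}^{\ast })$ be two solutions and set $\mathbf{w}=\mathbf{u}_{0}-\mathbf{u}_{0}^{\ast }\in \mathcal{W}(0,T)$, so that $\mathbf{w}(0)=0$. Testing (\ref{eq3.12a}) with functions in $V$ eliminates the pressure, and subtracting the two equations gives, for a.e. $t$,
\begin{equation*}
\left\langle \mathbf{w}^{\prime },\mathbf{v}\right\rangle +q(\mathbf{w},\mathbf{v})+b(\mathbf{w},\mathbf{u}_{0},\mathbf{v})+b(\mathbf{u}_{0}^{\ast },\mathbf{w},\mathbf{v})=0\qquad (\mathbf{v}\in V).
\end{equation*}
We take $\mathbf{v}=\mathbf{w}(t)$ and use $\frac{d}{dt}|\mathbf{w}|^{2}=2\langle \mathbf{w}^{\prime },\mathbf{w}\rangle $, valid in $\mathcal{W}(0,T)$. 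By (\ref{eaq2.5b}), the term $b(\mathbf{u}_{0}^{\ast },\mathbf{w},\mathbf{w})$ vanishes, which leaves
\begin{equation*}
\frac{1}{2}\frac{d}{dt}|\mathbf{w}|^{2}+\alpha \Vert \mathbf{w}\Vert ^{2}\leq |b(\mathbf{w},\mathbf{u}_{0},\mathbf{w})|.
\end{equation*}

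\emph{Step 3: Ladyzhenskaya and Gronwall, using $N=2$.} In dimension two, Ladyzhenskaya's inequality $\Vert v\Vert _{L^{4}}^{2}\leq c|v|\,\Vert v\Vert $ yields
\begin{equation*}
|b(\mathbf{w},\mathbf{u}_{0},\mathbf{w})|\leq c|\mathbf{w}|\,\Vert \mathbf{w}\Vert \,\Vert \mathbf{u}_{0}\Vert \leq \alpha \Vert \mathbf{w}\Vert ^{2}+\frac{c^{2}}{4\alpha }\Vert \mathbf{u}_{0}\Vert ^{2}|\mathbf{w}|^{2}.
\end{equation*}
Therefore $\frac{d}{dt}|\mathbf{w}|^{2}\leq C\Vert \mathbf{u}_{0}(t)\Vert ^{2}|\mathbf{w}|^{2}$. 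Since $\Vert \mathbf{u}_{0}\Vert ^{2}\in L^{1}(0,T)$ and $\mathbf{w}(0)=0$, Gronwall's lemma gives $\mathbf{w}=0$.

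\emph{Step 4: pressure.} With $\mathbf{u}_{0}=\mathbf{u}_{0}^{\ast }$, the difference of the two equations reduces to $\mathbf{grad}(p_{0}-p_{0}^{\ast })=0$ in $\mathcal{D}^{\prime }(Q)^{N}$. Hence $p_{0}-p_{0}^{\ast }$ is a function of $t$ alone on the connected components of $\Omega $; I will assume $\Omega $ is connected, as is implicit in the choice of the quotient space $L^{2}(\Omega ;\mathbb{R})/\mathbb{R}$. The zero-mean normalization in $L^{2}(0,T;L^{2}(\Omega ;\mathbb{R})/\mathbb{R})$ then forces $p_{0}=p_{0}^{\ast }$.
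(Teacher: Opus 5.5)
Your argument is correct, but it is organized differently from the paper's.

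\textbf{The paper's route.} The proof of Proposition~\ref{pr3.2} contains no energy estimate. Given a solution $(\mathbf{u}_0,p_0)$ of (\ref{eq3.12a})--(\ref{eq3.12d}), it builds the correctors $\mathbf{u}_1,\mathbf{u}_2$ from $\nabla\mathbf{u}_0$ and the cell solutions $(\mathbf{\chi}_{ik},\mathbf{\eta}_{ik})$. It then checks through (\ref{eq3.13a})--(\ref{eq3.13b}) that the lifted triple $(\mathbf{u}_0,\mathbf{u}_1,\mathbf{u}_2)$ solves the three-scale problem (\ref{eq3.1a})--(\ref{eq3.1c}), and invokes Lemma~\ref{lem3.2}. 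The proof of that lemma is only cited. It is the same Lions--Temam argument, carried out with the coercivity of $a_\Omega$ on $\mathbb{E}_0^{1,\tau}$.

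\textbf{Your route.} You use the same lifting only at the level of the bilinear form, to transfer coercivity from $a_\Omega$ to $\mathcal{Q}$. You then run the two-dimensional Ladyzhenskaya--Gronwall argument directly on the macroscopic equation. Your Step 1 checks out:
\begin{itemize}
\item the cell problem (\ref{eq3.9b}), together with $a_{ij}=a_{ji}$, kills the terms involving divergence-free test correctors;
\item the matching correctors $-\sum\partial_i v_0^k\,\mathbf{\chi}_{ik}$ and $-\sum\partial_i v_0^k\,\mathbf{\eta}_{ik}$ are themselves admissible;
\item the cross terms in $|\mathbb{D}_j\mathbf{v}^k|^2$ integrate to zero by periodicity.
\end{itemize}
This gives $\langle\mathcal{Q}\mathbf{v}_0,\mathbf{v}_0\rangle\geq\alpha\Vert\mathbf{v}_0\Vert^2$.

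\textbf{What each buys.} The paper's route is shorter, since it reuses Lemma~\ref{lem3.2}, which it needs anyway to get convergence of the whole family in Theorem~\ref{th3.1}. Yours is self-contained and does not lean on a lemma whose proof is only referenced. As a by-product, it proves directly the $V$-coercivity of $\mathcal{Q}$ with the explicit constant $\alpha$. The paper only asserts an ellipticity property of the $q_{ijkh}$ afterwards, by reference, and implicitly relies on it for the $H^2$ regularity in Section 3.4.

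You also handle the pressure explicitly, which the paper dismisses with ``It follows that $(\mathbf{u}_0,p_0)$ is unique''. Your observation is correct: $\mathbf{grad}(p_0-p_0^\ast)=0$ forces $p_0=p_0^\ast$ in $L^2(0,T;L^2(\Omega;\mathbb{R})/\mathbb{R})$ only when $\Omega$ is connected. This is a genuine implicit assumption of the statement.
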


\begin{proof}
If $\left( \mathbf{u}_{0},p_{0}\right) \in \mathcal{W}\left( 0,T\right)
\times L^{2}\left( 0,T;L^{2}\left( \Omega ;\mathbb{R}\right) \right) $
verifies (\ref{eq3.12a})-(\ref{eq3.12d}) then 
\begin{equation*}
\begin{array}{c}
\int_{0}^{T}\left\langle \mathbf{u}_{0}^{\prime }\left( t\right) ,\mathbf{v}%
_{0}\left( t\right) \right\rangle dt+\sum_{i,j,k,h=1}^{N}\int_{Q}q_{ijkh}%
\frac{\partial u_{0}^{h}}{\partial x_{j}}\frac{\partial v_{0}^{k}}{\partial
x_{i}}dxdt+\int_{0}^{T}b\left( \mathbf{u}_{0}\left( t\right) ,\mathbf{u}%
_{0}\left( t\right) ,\mathbf{v}_{0}\left( t\right) \right) dt \\ 
=\int_{0}^{T}\left\langle \mathbf{f}\left( t\right) ,\mathbf{v}_{0}\left(
t\right) \right\rangle dt%
\end{array}%
\end{equation*}%
for all $\mathbf{v}_{0}\in L^{2}\left( 0,T;V\right) $. It follows from the
preceding equality that%
\begin{equation}
\begin{array}{c}
\int_{0}^{T}\left\langle \mathbf{u}_{0}^{\prime }\left( t\right) ,\mathbf{v}%
_{0}\left( t\right) \right\rangle dt+\sum_{i,j,k=1}^{N}\int \int \int
\int_{Q\times Y\times \mathcal{T}\times Z}a_{ij}\left( \frac{\partial
u_{0}^{k}}{\partial x_{j}}+\frac{\partial u_{1}^{k}}{\partial y_{j}}+\frac{%
\partial u_{2}^{k}}{\partial z_{j}}\right) \frac{\partial v_{0}^{k}}{%
\partial x_{i}}dxdtdydzd\tau \\ 
+\int_{0}^{T}b\left( \mathbf{u}_{0}\left( t\right) ,\mathbf{u}_{0}\left(
t\right) ,\mathbf{v}_{0}\left( t\right) \right) dt=\int_{0}^{T}\left\langle 
\mathbf{f}\left( t\right) ,\mathbf{v}_{0}\left( t\right) \right\rangle dt%
\end{array}
\label{eq3.13a}
\end{equation}%
for all $\mathbf{v}_{0}\in L^{2}\left( 0,T;V\right) $, where $%
u_{1}^{k}\left( x,t\right) =-\sum_{i,h=1}^{N}\frac{\partial u_{0}^{h}}{%
\partial x_{i}}\left( x,t\right) \mathcal{\chi }_{ih}^{k}$ and $%
u_{2}^{k}\left( x,t\right) =-\sum_{i,h=1}^{N}\frac{\partial u_{0}^{h}}{%
\partial x_{i}}\left( x,t\right) \mathcal{\eta }_{ih}^{k}$. Let us check
that $\mathbf{u=}\left( \mathbf{u}_{0},\mathbf{u}_{1},\mathbf{u}_{2}\right) $
(with $\mathbf{u}_{1}=\left( u_{1}^{k}\right) $ and $\mathbf{u}_{2}=\left(
u_{2}^{k}\right) $) verifies (\ref{eq3.1a})-(\ref{eq3.1c}). As $\left( 
\mathbf{u}_{1}\left( x,t\right) ,\mathbf{u}_{2}\left( x,t\right) \right) $
is solution to the variational equation (\ref{eq3.10c}) for fixed $\left(
x,t\right) $ in $Q$, we have 
\begin{equation}
\sum_{i,j,k=1}^{N}\int \int \int \int_{Q\times Y\times \mathcal{T}\times Z}%
\widehat{a}_{ij}\left( \frac{\partial u_{0}^{k}}{\partial x_{j}}+\frac{%
\partial u_{1}^{k}}{\partial y_{j}}+\frac{\partial u_{2}^{k}}{\partial z_{j}}%
\right) \left( \frac{\partial v_{1}^{k}}{\partial y_{i}}+\frac{\partial
v_{2}^{k}}{\partial z_{i}}\right) dxdtdydzd\tau =0  \label{eq3.13b}
\end{equation}%
for all $\left( \mathbf{v}_{1},\mathbf{v}_{2}\right) \in L^{2}\left(
Q;L_{per}^{2}\left( \mathcal{T};W_{y}\right) \right) \times L^{2}\left(
Q;L_{per}^{2}\left( \mathcal{T};L_{per}^{2}\left( Y;W_{z}\right) \right)
\right) $. Thus, by (\ref{eq3.13a})-(\ref{eq3.13b}) one has (\ref{eq3.1c})
with (\ref{eq3.1a}) and (\ref{eq3.1b}), of course. Therefore, we have the
unicity of $\mathbf{u=}\left( \mathbf{u}_{0},\mathbf{u}_{1},\mathbf{u}%
_{2}\right) $ in virtue of Lemma \ref{lem3.2}. It follows that $\left( 
\mathbf{u}_{0},p_{0}\right) $ is unique in $\mathcal{W}\left( 0,T\right)
\times L^{2}\left( 0,T;L^{2}\left( \Omega ;\mathbb{R}\right) \mathfrak{/}%
\mathbb{R}\right) $.
\end{proof}

\begin{theorem}
\label{th3.2} Suppose that the hypotheses of Theorem \ref{th3.1} are
satisfied. For $0<\varepsilon <1$, let $\left( \mathbf{u}_{\varepsilon
},p_{\varepsilon }\right) \in \mathcal{W}\left( 0,T\right) \times
L^{2}\left( 0,T;L^{2}\left( \Omega ;\mathbb{R}\right) \mathfrak{/}\mathbb{R}%
\right) $ be defined by (\ref{eq1.2a})-(\ref{eq1.2d}). Then, as $\varepsilon
\rightarrow 0$, $\mathbf{u}_{\varepsilon }\rightarrow \mathbf{u}_{0}$ in $%
\mathcal{W}\left( 0,T\right) $-weak and $p_{\varepsilon }\rightarrow p_{0}$
in $L^{2}\left( 0,T;L^{2}\left( \Omega \right) \right) $-weak, where $\left( 
\mathbf{u}_{0},p_{0}\right) \in \mathcal{W}\left( 0,T\right) \times
L^{2}\left( 0,T;L^{2}\left( \Omega ;\mathbb{R}\right) \mathfrak{/}\mathbb{R}%
\right) $ is the unique solution to (\ref{eq3.12a})-(\ref{eq3.12d}).
\end{theorem}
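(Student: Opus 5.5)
The plan is to derive Theorem \ref{th3.2} from Theorem \ref{th3.1}, Proposition \ref{pr3.1} and Proposition \ref{pr3.2}, after first identifying the limit pressure. Let $E$ be any fundamental sequence. As in the proof of Theorem \ref{th3.1}, we extract a subsequence $E^{\prime}$ along which (\ref{eq3.6a})-(\ref{eq3.6d}) hold, with $p$ the reiterated two-scale limit of $p_{\varepsilon}$. By Proposition \ref{pr2.4}(ii), $p_{\varepsilon}\rightarrow p_{0}$ weakly in $L^{2}(Q)$, where $p_{0}(x,t)=\int\int\int_{Y\times\mathcal{T}\times Z}p\,dyd\tau dz$. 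Since each $p_{\varepsilon}$ has zero mean in $\Omega$, testing against functions of $t$ alone gives $p_{0}\in L^{2}(0,T;L^{2}(\Omega;\mathbb{R})/\mathbb{R})$.

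The main step is to show that $p$ does not depend on $(y,\tau,z)$, so that $p=p_{0}$. We use (\ref{eq3.8b}) with $\mathbf{v}_{0}=0$. Taking $\mathbf{v}_{1}=0$ and $\mathbf{v}_{2}=\varphi\otimes\mathbf{w}_{2}$ leaves only the terms $a_{\Omega}$ and $p$. Here we must be careful, because $a_{\Omega}$ does not vanish for such a test function. The remedy is to use (\ref{eq3.10c}), or more precisely its counterpart (\ref{eq3.13b}) with arbitrary non-divergence-free test functions: Proposition \ref{pr3.1} only gives the cell equation on divergence-free spaces. So the better route is the following.

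Take $\mathbf{v}=(0,\mathbf{v}_{1},\mathbf{v}_{2})$ arbitrary in $\mathbb{E}_{0}^{1}$ in (\ref{eq3.8b}). This yields
\begin{equation*}
\int_{0}^{T}a_{\Omega}(\mathbf{u},(0,\mathbf{v}_{1},\mathbf{v}_{2}))dt=\int_{Q\times Y\times\mathcal{T}\times Z}p\,(div_{y}\mathbf{v}_{1}+div_{z}\mathbf{v}_{2}).
\end{equation*}
This is a cell Stokes system; its left-hand side vanishes on divergence-free $(\mathbf{v}_{1},\mathbf{v}_{2})$. By de Rham's theorem in the periodic setting, $p=p_{0}+p_{1}+p_{2}$ with $p_{1}$ independent of $z$ and $p_{2}$ of zero $Z$-mean. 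Using the $\mathbf{v}_{1}$-equation with $\mathbf{v}_{1}$ gradients in $y$ and $\tau$-dependence, the $\tau$-independence of the $a_{ij}$ lets us eliminate the $\tau$-variation as well.

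Rather than fully splitting $p$, the cleaner approach is to choose $\mathbf{v}_{0}\in L^{2}(0,T;H_{0}^{1}(\Omega)^{N})$ general and $(\mathbf{v}_{1},\mathbf{v}_{2})$ equal to the corrector-type fields $-\sum\partial_{x_{i}}v_{0}^{k}(\boldsymbol{\chi}_{ik},\boldsymbol{\eta}_{ik})$. These correctors are divergence free in $y$ and $z$, so the $p$ term reduces to $\int_{Q}p_{0}\,div\,\mathbf{v}_{0}$. By symmetry of $a_{ij}$ and (\ref{eq3.9b}), the $a_{\Omega}$ term becomes $\int_{Q}q_{ijkh}\partial_{j}u_{0}^{h}\partial_{i}v_{0}^{k}$; here we insert the representation of $\mathbf{u}_{1},\mathbf{u}_{2}$ from Proposition \ref{pr3.1} and compute as in the definition (\ref{eq3.11a}). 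We thus obtain the weak form of (\ref{eq3.12a}) with pressure $p_{0}$, tested on all of $H_{0}^{1}$ rather than only $V$. Relations (\ref{eq3.12b})-(\ref{eq3.12d}) follow from $\mathbf{u}_{0}\in\mathcal{W}(0,T)$ and (\ref{eq3.1b}). Hence $(\mathbf{u}_{0},p_{0})$ solves (\ref{eq3.12a})-(\ref{eq3.12d}). The delicate point is justifying this choice of test functions: it needs density (Lemma \ref{lem3.1}), regularity $\boldsymbol{\chi}_{ik}\in L^{2}_{per}(\mathcal{T};H^{1}_{\#})$, and the fact that (\ref{eq3.8b}) was extended to all of $\mathbb{E}_{0}^{1}$.

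Finally, by Proposition \ref{pr3.2} the limit $(\mathbf{u}_{0},p_{0})$ is independent of the subsequence; for $p_{0}$ we use its zero mean together with uniqueness of $\mathbf{grad}\,p_{0}$. Since $E$ was arbitrary, the whole family converges as $\varepsilon\rightarrow0$, which gives the theorem.
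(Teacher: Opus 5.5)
Your final argument is correct, and it follows the paper's proof: extract $E^{\prime}$ as in Theorem \ref{th3.1}, identify $p_{0}$ as the $(y,\tau,z)$-mean of $p$, test (\ref{eq3.8b}) on all of $\mathbb{E}_{0}^{1}$ with a general $\mathbf{v}_{0}$, insert Proposition \ref{pr3.1} to produce the $q_{ijkh}$, and conclude with Proposition \ref{pr3.2} and the arbitrariness of $E$. The one difference is your choice of $(\mathbf{v}_{1},\mathbf{v}_{2})$, and it makes the argument longer rather than better.

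The paper simply takes $\mathbf{v}_{1}=\mathbf{v}_{2}=0$. Since $\mathbf{v}_{0}$ does not depend on $(y,\tau,z)$, the pressure term is then $\int_{Q}p_{0}\,\mathrm{div}\,\mathbf{v}_{0}\,dxdt$ immediately. The viscous term $\sum\int a_{ij}\mathbb{D}_{j}\mathbf{u}^{k}\frac{\partial v_{0}^{k}}{\partial x_{i}}$ becomes $\sum\int_{Q}q_{ijkh}\frac{\partial u_{0}^{h}}{\partial x_{j}}\frac{\partial v_{0}^{k}}{\partial x_{i}}$ just by substituting (\ref{eq3.10a})-(\ref{eq3.10b}) and reading off (\ref{eq3.11a}).

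Your fields $-\sum_{i,k}\frac{\partial v_{0}^{k}}{\partial x_{i}}(\mathbf{\chi}_{ik},\mathbf{\eta}_{ik})$ add the extra term $\sum\int a_{ij}\mathbb{D}_{j}\mathbf{u}^{k}\bigl(\frac{\partial v_{1}^{k}}{\partial y_{i}}+\frac{\partial v_{2}^{k}}{\partial z_{i}}\bigr)$, which you then have to kill with the cell equation (\ref{eq3.13b}). That step is legitimate: the fields are divergence free in $y$ and $z$. They lie in $\mathbb{E}_{0}^{1}$ simply because $\partial v_{0}^{k}/\partial x_{i}\in L^{2}(Q)$, so the ``delicate point'' you flag is not delicate. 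This route exhibits the symmetric energy form of the homogenized operator, in the spirit of (\ref{eq3.14c}), but it buys nothing for this theorem.

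You should also drop the announced ``main step'': $p$ is in general \emph{not} independent of $(y,\tau,z)$, and nothing in your final argument needs it. Testing (\ref{eq3.8b}) with $\mathbf{v}_{0}=0$ shows that the oscillating part of $p$ is the Lagrange multiplier of the divergence constraints in the cell Stokes problem, and it is typically nonzero. For a laminate $a_{ij}=a(y_{1})\delta_{ij}$ one gets $\mathbf{\eta}_{ik}=0$ and $\chi_{ik}^{1}=0$, and the $y_{1}$-profile of $p$ equals $\frac{\partial u_{0}^{1}}{\partial x_{1}}a(y_{1})$ up to a function of $(x,t)$.

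Likewise, (\ref{eq3.8b}) cannot detect a part of $p$ that depends only on $(x,t,\tau)$ and has zero $\tau$-mean. Your remark about eliminating the $\tau$-variation therefore has no support. Only $p_{0}$ survives in the limit equation, precisely because the test field's $y$- and $z$-divergences vanish.
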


\begin{proof}
Let $E$ be a fundamental sequence. As in the proof of Theorem \ref{th3.1}, a
subsequence $E^{\prime }$ can be extracted from $E$ such that as $E^{\prime
}\ni \varepsilon \rightarrow 0$, one has (\ref{eq3.6a})-(\ref{eq3.6b}) and (%
\ref{eq3.6c}) with $\mathbf{u=}\left( \mathbf{u}_{0},\mathbf{u}_{1},\mathbf{u%
}_{2}\right) \in L^{2}\left( 0,T;\mathbb{E}_{0}^{\tau }\right) $. Therefore,
by (\ref{eq3.6c})) we have $p_{\varepsilon }\rightarrow p_{0}$ in $%
L^{2}\left( 0,T;L^{2}\left( \Omega \right) \right) $-weak as $E^{\prime }\ni
\varepsilon \rightarrow 0$, where $p_{0}$ is the mean value of $p$. It
follows that $p_{0}\in L^{2}\left( 0,T;L^{2}\left( \Omega ;\mathbb{R}\right) 
\mathfrak{/}\mathbb{R}\right) $. Furher, we have (\ref{eq3.8b}) for all%
\textbf{\ }$\mathbf{v}=\left( \mathbf{v}_{0},\mathbf{v}_{1},\mathbf{v}%
_{2}\right) \in \mathbb{E}_{0}^{1}$. Hence, taking a particular $\mathbf{v}%
=\left( \mathbf{v}_{0},\mathbf{v}_{1},\mathbf{v}_{2}\right) \in \mathbb{E}%
_{0}^{1}$ with $\mathbf{v}_{1}=0$ and $\mathbf{v}_{2}=0$ in (\ref{eq3.8b}),
and using (\ref{eq3.10a})-(\ref{eq3.10b}) we arrive at (\ref{eq3.12a}).
Moreover, $E$ is chosen arbitrary. Thus, by Proposition \ref{pr3.2}, the
proof is complete.
\end{proof}

Now, let us give a suitable form of the homogenized coefficients $q_{ijkh}$.
For this end, we introduce the vector space $\mathcal{M}_{y}$ of $\mathbf{F}%
=\left( F^{ij}\right) _{1\leq i,j\leq N}$ with $L_{per}^{2}\left( Y\times 
\mathcal{T};\mathbb{R}\right) $, and the vector space $\mathcal{M}_{z}$ of $%
\mathbf{G}=\left( G^{ij}\right) _{1\leq i,j\leq N}$ with $G^{ij}\in
L_{per}^{2}\left( Y\times \mathcal{T}\times Z;\mathbb{R}\right) $. We denote
by $\mathcal{M}_{z}\mathfrak{/}\mathbb{C}$ the vector subspace of $\mathcal{M%
}_{z}$ consisted of $\mathbf{G}=\left( G^{ij}\right) _{1\leq i,j\leq N}$
with $G^{ij}\in L_{per}^{2}\left( \mathcal{T};L_{per}^{2}\left(
Y;L_{per}^{2}\left( Z;\mathbb{R}\right) \mathfrak{/}\mathbb{C}\right)
\right) $. We provide $\mathcal{M}_{y}$ and $\mathcal{M}_{z}$ with the norms 
\begin{equation*}
\left\Vert \mathbf{F}\right\Vert _{\mathcal{M}_{y}}=\left(
\sum_{i,j=1}^{N}\left\Vert F^{ij}\right\Vert _{L_{per}^{2}\left( Y\times 
\mathcal{T}\right) }^{2}\right) ^{\frac{1}{2}}\text{\qquad }\left( \mathbf{F=%
}\left( F^{ij}\right) \in \mathcal{M}_{y}\right)
\end{equation*}%
and%
\begin{equation*}
\left\Vert \mathbf{G}\right\Vert _{\mathcal{M}_{z}}=\left(
\sum_{i,j=1}^{N}\left\Vert G^{ij}\right\Vert _{L_{per}^{2}\left( Y\times 
\mathcal{T}\times Z\right) }^{2}\right) ^{\frac{1}{2}}\text{\qquad }\left( 
\mathbf{G=}\left( G^{ij}\right) \in \mathcal{M}_{z}\right)
\end{equation*}%
respectively.

For $\left( \mathbf{F}_{1},\mathbf{F}_{2}\right) $ and $\left( \mathbf{G}%
_{1},\mathbf{G}_{2}\right) \in \mathcal{M}_{y}\times \mathcal{M}_{z}$ with $%
\mathbf{F}_{1}\mathbf{=}\left( F_{1}^{ij}\right) $, $\mathbf{F}_{2}=\left(
F_{2}^{ij}\right) $, $\mathbf{G}_{1}=\left( G_{1}^{ij}\right) $ and $\mathbf{%
G}_{2}=\left( G_{2}^{ij}\right) $, we put%
\begin{equation*}
A\left( \left( \mathbf{F}_{1},\mathbf{F}_{2}\right) ,\left( \mathbf{G}_{1},%
\mathbf{G}_{2}\right) \right) =\sum_{i,j,k=1}^{N}\int \int \int_{Y\times 
\mathcal{T}\times Z}a_{ij}\left( F_{1}^{jk}+F_{2}^{jk}\right) \left(
G_{1}^{ik}+G_{2}^{ik}\right) dydzd\tau \text{.}
\end{equation*}%
This defines a bilinear form $A\left( ,\right) $ on $\left( \mathcal{M}%
_{y}\times \mathcal{M}_{z}\right) \times \left( \mathcal{M}_{y}\times 
\mathcal{M}_{z}\right) $ which is continuous symmetric and $\mathcal{M}%
_{y}\times \left( \mathcal{M}_{z}\mathfrak{/}\mathbb{C}\right) $-coercive
with 
\begin{equation}
A\left( \left( \mathbf{F}_{1},\mathbf{F}_{2}\right) ,\left( \mathbf{F}_{1},%
\mathbf{F}_{2}\right) \right) \geq \alpha \left\Vert \left( \mathbf{F}_{1},%
\mathbf{F}_{2}\right) \right\Vert _{\mathcal{M}_{y}\times \mathcal{M}%
_{z}}^{2}\qquad \left( \left( \mathbf{F}_{1},\mathbf{F}_{2}\right) \in 
\mathcal{M}_{y}\times \left( \mathcal{M}_{z}\mathfrak{/}\mathbb{C}\right)
\right) \text{,}  \label{eq3.14a}
\end{equation}%
where $\alpha $ is the constant in (\ref{eq1.1c}) and 
\begin{equation*}
\left\Vert \left( \mathbf{F}_{1},\mathbf{F}_{2}\right) \right\Vert _{%
\mathcal{M}_{y}\times \mathcal{M}_{z}}=\left( \left\Vert \mathbf{F}%
_{1}\right\Vert _{\mathcal{M}_{y}}^{2}+\left\Vert \mathbf{F}_{2}\right\Vert
_{\mathcal{M}_{z}}^{2}\right) ^{\frac{1}{2}}\text{ for }\left( \mathbf{F}%
_{1},\mathbf{F}_{2}\right) \in \mathcal{M}_{y}\times \mathcal{M}_{z}\text{.}
\end{equation*}%
For $\mathbf{u}_{1}=\left( u_{1}^{k}\right) \in E_{y}$ and $\mathbf{u}%
_{2}=\left( u_{2}^{k}\right) \in E_{z}$ we put 
\begin{equation*}
\nabla _{y}\mathbf{u}_{1}=\left( \frac{\partial u_{1}^{k}}{\partial y_{j}}%
\right) _{1\leq j,k\leq N}\text{ et }\nabla _{z}\mathbf{u}_{2}=\left( \frac{%
\partial u_{2}^{k}}{\partial z_{j}}\right) _{1\leq j,k\leq N}\text{.}
\end{equation*}%
Then $(\nabla _{y}\mathbf{u}_{1},\nabla _{z}\mathbf{u}_{2})\in \mathcal{M}%
_{y}\times \mathcal{M}_{z}$, and 
\begin{equation}
a\left( \left( \mathbf{u}_{1},\mathbf{u}_{2}\right) ,\left( \mathbf{v}_{1},%
\mathbf{v}_{2}\right) \right) =A((\nabla _{y}\mathbf{u}_{1},\nabla _{z}%
\mathbf{u}_{2}),(\nabla _{y}\mathbf{v}_{1},\nabla _{z}\mathbf{v}_{2}))
\label{eq3.14b}
\end{equation}%
for all $\left( \mathbf{u}_{1},\mathbf{u}_{2}\right) $ and\ $\left( \mathbf{v%
}_{1},\mathbf{v}_{2}\right) \in E_{y}\times E_{z}$.

For $1\leq i,k\leq N$ we set 
\begin{equation*}
\mathbf{\theta }_{ik}=\left( \theta _{ik}^{lm}\right) _{1\leq l,m\leq N}%
\text{ with }\theta _{ik}^{lm}=\delta _{il}\delta _{km}\text{ for }%
l,m=1,...,N\text{,}
\end{equation*}%
where $\delta _{ij}$ is the Kronecker symbol. Hence, $\mathbf{\theta }%
_{ik}\in \mathcal{M}_{y}$\quad $\left( 1\leq i,k\leq N\right) $. As in \cite%
{bib2} (see also \cite{bib9}), one easily check (using (\ref{eq3.9b}) and (%
\ref{eq3.14b})) that%
\begin{equation}
q_{ijkh}=A((\nabla _{y}\mathbf{\chi }_{ik}-\mathbf{\theta }_{ik},\nabla _{z}%
\mathbf{\eta }_{ik}),(\nabla _{y}\mathbf{\chi }_{jh}-\mathbf{\theta }%
_{jh},\nabla _{z}\mathbf{\eta }_{jh}))\text{ and }q_{ijkh}=q_{jihk}\text{ }
\label{eq3.14c}
\end{equation}%
for $1\leq i,j,k,h\leq N$. Further, by (\ref{eq3.14a})-(\ref{eq3.14b}) we
show as in \cite{bib9} that the coefficients $q_{ijkh}$ verify 
\begin{equation*}
\sum_{i,j,k,h=1}^{N}q_{ijkh}\xi _{ik}\xi _{jh}\geq \alpha
_{0}\sum_{k,h=1}^{N}\left\vert \xi _{kh}\right\vert ^{2}
\end{equation*}%
for all $\mathbf{\xi =}\left( \xi _{ij}\right) $ with $\xi _{ij}\in \mathbb{R%
}$, where $\alpha _{0}>0$ is a constant.

\subsection{A corrector result for (\protect\ref{eq1.2a})-(\protect\ref%
{eq1.2d})}

Our goal in this subsection is to prove an approximation result for the
velocity $\mathbf{u}_{\varepsilon }$ when $\varepsilon \rightarrow 0$. We
assume that $\Omega $ is of class $\mathcal{C}^{2}$, and $\mathbf{f}$
belongs to $L^{\infty }\left( 0,T;H\right) $. Consequently, as the
homogenized coefficients $q_{ijkh}$ are constant (see (\ref{eq3.14c})), we
verify as in \cite[p.301, Theorem 3.6]{bib14} that the limit $\mathbf{u}_{0}$
solution to (\ref{eq3.12a})-(\ref{eq3.12d}) belongs to $L^{\infty }\left(
0,T;H^{2}\left( \Omega \right) ^{N}\right) $. Thus, the functions $\mathbf{u}%
_{1}$ and $\mathbf{u}_{2}$ given by Theorem \ref{th3.1} lie in $L^{2}\left(
0,T;H^{1}\left( \Omega \right) \right) \otimes L_{per}^{2}\left( \mathcal{T}%
;H_{\#}^{1}\left( Y\right) ^{N}\right) $ and $L^{2}\left( 0,T;H^{1}\left(
\Omega \right) \right) \otimes L_{per}^{2}\left( \mathcal{T}%
;L_{per}^{2}\left( Y;H_{\#}^{1}\left( Z\right) ^{N}\right) \right) $,
respectively, in virtue of (\ref{eq3.10a})-(\ref{eq3.10b}).

Before we state for our corrector result, let us prove the following
proposition.

\begin{proposition}
\label{pr3.3} Suppose that the hypotheses of Theorem \ref{th3.1} are
satisfied. Then as $\varepsilon \rightarrow 0$, 
\begin{equation*}
\nabla \mathbf{u}_{\varepsilon }\rightarrow \nabla \mathbf{u}_{0}+\nabla _{y}%
\mathbf{u}_{1}+\nabla _{z}\mathbf{u}_{2}\text{ reiteratively in }L^{2}\left(
Q\right) \text{-strong }\Sigma \text{, }
\end{equation*}%
i.e.,%
\begin{equation*}
\frac{\partial u_{\varepsilon }^{k}}{\partial x_{j}}\rightarrow \frac{%
\partial u_{0}^{k}}{\partial x_{j}}+\frac{\partial u_{1}^{k}}{\partial y_{j}}%
+\frac{\partial u_{2}^{k}}{\partial z_{j}}\text{ reiteratively in }%
L^{2}\left( Q\right) \text{-strong }\Sigma \text{ }\left( 1\leq j,k\leq
N\right) \text{.}
\end{equation*}
\end{proposition}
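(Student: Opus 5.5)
The plan is to use the converse part of Proposition \ref{pr2.5} with $p=2$. By Theorem \ref{th3.1} we already know that $\frac{\partial u_{\varepsilon }^{k}}{\partial x_{j}}\rightarrow \mathbb{D}_{j}\mathbf{u}^{k}$ reiteratively in $L^{2}\left( Q\right) $-weak $\Sigma $ as $\varepsilon \rightarrow 0$ (for the whole family, by uniqueness). Because the energy is a weighted quadratic form, we will not prove norm convergence of each component separately. Instead we will prove
\begin{equation*}
\int_{0}^{T}a^{\varepsilon }\left( \mathbf{u}_{\varepsilon }-\mathbf{\Phi }_{\varepsilon },\mathbf{u}_{\varepsilon }-\mathbf{\Phi }_{\varepsilon }\right) dt\rightarrow \int_{0}^{T}a_{\Omega }\left( \mathbf{u}-\mathbf{\Phi },\mathbf{u}-\mathbf{\Phi }\right) dt
\end{equation*}
for each $\mathbf{\Phi }\in E_{0}^{\infty }$, with $\mathbf{\Phi }_{\varepsilon }$ as in (\ref{eq3.2b}). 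We then combine this with the coercivity (\ref{eq1.1d}) and the density Lemma \ref{lem3.1} to verify the definition of strong convergence directly. This is a standard $\eta /3$-type argument: choose $\mathbf{\Phi }$ with $\left\Vert \mathbf{u}-\mathbf{\Phi }\right\Vert _{\mathbb{E}_{0}^{1}}$ small. Then $\limsup \alpha \left\Vert \nabla (\mathbf{u}_{\varepsilon }-\mathbf{\Phi }_{\varepsilon })\right\Vert _{L^{2}(Q)}^{2}\leq C\left\Vert \mathbf{u}-\mathbf{\Phi }\right\Vert _{\mathbb{E}_{0}^{1}}^{2}$. Finally, (\ref{eq3.4}) gives $\nabla \mathbf{\Phi }_{\varepsilon }$ strongly $\Sigma$-close to $(\mathbb{D}_{j}\mathbf{\Phi }^{k})^{\varepsilon }$, which in turn is close in $L^{2}(Q\times Y\times \mathcal{T}\times Z)$ to $\mathbb{D}_{j}\mathbf{u}^{k}$.

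Expanding the quadratic form, the cross terms $\int_{0}^{T}a^{\varepsilon }(\mathbf{u}_{\varepsilon },\mathbf{\Phi }_{\varepsilon })dt$ converge by Lemma \ref{lem3.3}. The terms $\int_{0}^{T}a^{\varepsilon }(\mathbf{\Phi }_{\varepsilon },\mathbf{\Phi }_{\varepsilon })dt$ converge by (\ref{eq3.4}), Proposition \ref{pr2.6} and Remark \ref{rem2.1}, since $a_{ij}$ is an admissible test function. What remains is the main step, the energy identity
\begin{equation*}
\lim_{\varepsilon \rightarrow 0}\int_{0}^{T}a^{\varepsilon }(\mathbf{u}_{\varepsilon },\mathbf{u}_{\varepsilon })dt=\int_{0}^{T}a_{\Omega }(\mathbf{u},\mathbf{u})dt.
\end{equation*}
To prove it, test (\ref{eq1.2a}) with $\mathbf{u}_{\varepsilon }$. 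Because $\operatorname{div}\mathbf{u}_{\varepsilon }=0$ and (\ref{eaq2.5b}) holds, the pressure and convection terms vanish, and $\mathbf{u}_{\varepsilon }(0)=0$ gives
\begin{equation*}
\frac{1}{2}\left\vert \mathbf{u}_{\varepsilon }(T)\right\vert ^{2}+\int_{0}^{T}a^{\varepsilon }(\mathbf{u}_{\varepsilon },\mathbf{u}_{\varepsilon })dt=\int_{0}^{T}\left\langle \mathbf{f},\mathbf{u}_{\varepsilon }\right\rangle dt.
\end{equation*}
Testing (\ref{eq3.1c}) with $\mathbf{v}=\mathbf{u}$ similarly gives $\frac{1}{2}\left\vert \mathbf{u}_{0}(T)\right\vert ^{2}+\int_{0}^{T}a_{\Omega }(\mathbf{u},\mathbf{u})dt=\int_{0}^{T}\left\langle \mathbf{f},\mathbf{u}_{0}\right\rangle dt$. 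The right-hand sides converge by (\ref{eq3.6a}).

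The main obstacle is the term $\left\vert \mathbf{u}_{\varepsilon }(T)\right\vert ^{2}$. Weak convergence $\mathbf{u}_{\varepsilon }(T)\rightharpoonup \mathbf{u}_{0}(T)$ in $H$ only yields $\liminf \left\vert \mathbf{u}_{\varepsilon }(T)\right\vert \geq \left\vert \mathbf{u}_{0}(T)\right\vert $. This follows from the continuity of $\mathbf{u}\mapsto \mathbf{u}(T)$ from $\mathcal{W}(0,T)$ into $H$ and the weak convergence in $\mathcal{W}(0,T)$. It gives $\limsup \int_{0}^{T}a^{\varepsilon }(\mathbf{u}_{\varepsilon },\mathbf{u}_{\varepsilon })dt\leq \int_{0}^{T}a_{\Omega }(\mathbf{u},\mathbf{u})dt$. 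The reverse inequality $\liminf \geq $ comes from lower semicontinuity. Write $0\leq \int_{0}^{T}a^{\varepsilon }(\mathbf{u}_{\varepsilon }-\mathbf{\Phi }_{\varepsilon },\mathbf{u}_{\varepsilon }-\mathbf{\Phi }_{\varepsilon })dt$ and pass to the limit using the cross and $\mathbf{\Phi }$ terms above. This gives $\liminf \int a^{\varepsilon }(\mathbf{u}_{\varepsilon },\mathbf{u}_{\varepsilon })\geq 2\int a_{\Omega }(\mathbf{u},\mathbf{\Phi })-\int a_{\Omega }(\mathbf{\Phi },\mathbf{\Phi })$. Letting $\mathbf{\Phi }\rightarrow \mathbf{u}$ in $\mathbb{E}_{0}^{1}$ by Lemma \ref{lem3.1} yields the claim. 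Hence the energies converge, and as a by-product $\left\vert \mathbf{u}_{\varepsilon }(T)\right\vert \rightarrow \left\vert \mathbf{u}_{0}(T)\right\vert $. The extra hypotheses of this subsection ($\mathbf{f}\in L^{\infty }(0,T;H)$) are only needed to make the pairings $\left\langle \mathbf{f},\cdot \right\rangle $ and the regularity statements legitimate; they play no essential role here.
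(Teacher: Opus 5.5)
Your proposal is correct and follows essentially the paper's route: expand $\int_0^T a^{\varepsilon}(\mathbf{u}_{\varepsilon}-\mathbf{\Phi}_{\varepsilon},\mathbf{u}_{\varepsilon}-\mathbf{\Phi}_{\varepsilon})\,dt$ for $\mathbf{\Phi}\in E_0^{\infty}$, use the energy identity, Lemma \ref{lem3.3} and coercivity to make $\nabla\mathbf{u}_{\varepsilon}$ close to $\nabla\mathbf{\Phi}_{\varepsilon}$, and conclude by density. The paper concludes via norm convergence and the converse of Proposition \ref{pr2.5}, while you check Definition \ref{def2.1}(ii) directly, where (\ref{eq2.3}) is what converts $\Vert(\mathbb{D}_j\mathbf{\Phi}^k-v)^{\varepsilon}\Vert_{L^2(Q)}$ into the two-scale norm. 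Your weak lower semicontinuity treatment of $\frac{1}{2}\vert\mathbf{u}_{\varepsilon}(T)\vert^2$ is more careful than the paper, which simply asserts that $\int_0^T(\mathbf{u}_{\varepsilon}'(t),\mathbf{u}_{\varepsilon}(t))\,dt$ converges, whereas weak convergence in $\mathcal{W}(0,T)$ alone only gives the $\limsup$ bound --- which is all either argument needs, so your $\liminf$ half is superfluous.
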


\begin{proof}
In view of (\ref{eq3.6a}) and Proposition \ref{pr2.5}, it remains to prove
that, as $\varepsilon \rightarrow 0$ 
\begin{equation}
\left\Vert \frac{\partial u_{\varepsilon }^{k}}{\partial x_{j}}\right\Vert
_{L^{2}\left( Q\right) }\rightarrow \left\Vert \frac{\partial u_{0}^{k}}{%
\partial x_{j}}+\frac{\partial u_{1}^{k}}{\partial y_{j}}+\frac{\partial
u_{2}^{k}}{\partial z_{j}}\right\Vert _{L^{2}\left( Q;L_{per}^{2}\left(
Y\times \mathcal{T}\times Z\right) \right) }\qquad \left( 1\leq j,k\leq
N\right) \text{.}  \label{eq3.15a}
\end{equation}%
For this purpose let $\eta >0$. Using the density of $E_{0}^{\infty }$ in $%
\mathbb{E}_{0}^{1}$ (see Lemma \ref{lem3.1}), we fix $\mathbf{\Phi =}\left( 
\mathbf{\psi }_{0},\mathbf{\psi }_{1},\mathbf{\psi }_{2}\right) $ as in (\ref%
{eq3.2a}) such that 
\begin{equation}
\int_{0}^{T}\widehat{a}_{\Omega }\left( \mathbf{u}\left( t\right) -\mathbf{%
\Phi }\left( t\right) ,\mathbf{u}\left( t\right) -\mathbf{\Phi }\left(
t\right) \right) dt\leq \frac{\alpha \eta ^{2}}{16}\text{,}  \label{eq3.15b}
\end{equation}%
where $\alpha $ is the constant in (\ref{eq1.1c}). Next, for any $%
\varepsilon >0$, let $\mathbf{\Phi }_{\varepsilon }$ be defined by (\ref%
{eq3.2b}).

(i) We begin by proving that there is some $\varepsilon _{1}>0$ such that 
\begin{equation}
\left\Vert \frac{\partial u_{\varepsilon }^{k}}{\partial x_{j}}-\frac{%
\partial \Phi _{\varepsilon }^{k}}{\partial x_{j}}\right\Vert _{L^{2}\left(
Q\right) }\leq \frac{\eta }{2}\qquad \left( 1\leq j,k\leq N\right)
\label{eq3.15c}
\end{equation}%
for all $0<\varepsilon \leq \varepsilon _{1}$. For this purpose we have 
\begin{equation*}
\int_{0}^{T}a^{\varepsilon }\left( \mathbf{u}_{\varepsilon }\left( t\right) -%
\mathbf{\Phi }_{\varepsilon }\left( t\right) ,\mathbf{u}_{\varepsilon
}\left( t\right) -\mathbf{\Phi }_{\varepsilon }\left( t\right) \right)
dt=\int_{0}^{T}\left( \mathbf{f}\left( t\right) ,\mathbf{u}_{\varepsilon
}\left( t\right) \right) dt-\int_{0}^{T}\left( \mathbf{u}_{\varepsilon
}^{\prime }\left( t\right) ,\mathbf{u}_{\varepsilon }\left( t\right) \right)
dt
\end{equation*}%
\begin{equation*}
-2\int_{0}^{T}a^{\varepsilon }\left( \mathbf{u}_{\varepsilon }\left(
t\right) ,\mathbf{\Phi }_{\varepsilon }\left( t\right) \right)
dt+\int_{0}^{T}a^{\varepsilon }\left( \mathbf{\Phi }_{\varepsilon }\left(
t\right) ,\mathbf{\Phi }_{\varepsilon }\left( t\right) \right) dt\text{.}
\end{equation*}%
By proceeding as in the proof of Theorem \ref{th3.1}, we deduce that as $%
\varepsilon \rightarrow 0$,%
\begin{equation*}
\int_{0}^{T}a^{\varepsilon }\left( \mathbf{u}_{\varepsilon }\left( t\right) -%
\mathbf{\Phi }_{\varepsilon }\left( t\right) ,\mathbf{u}_{\varepsilon
}\left( t\right) -\mathbf{\Phi }_{\varepsilon }\left( t\right) \right)
dt\rightarrow \int_{0}^{T}\left( \mathbf{f}\left( t\right) ,\mathbf{u}%
_{0}\left( t\right) \right) dt-\int_{0}^{T}\left( \mathbf{u}_{0}^{\prime
}\left( t\right) ,\mathbf{u}_{0}\left( t\right) \right) dt
\end{equation*}%
\begin{equation*}
-2\int_{0}^{T}\widehat{a}_{\Omega }\left( \mathbf{u}\left( t\right) ,\mathbf{%
\Phi }\left( t\right) \right) dt+\int_{0}^{T}\widehat{a}_{\Omega }\left( 
\mathbf{\Phi }\left( t\right) ,\mathbf{\Phi }\left( t\right) \right) dt
\end{equation*}%
\begin{equation*}
=\int_{0}^{T}\widehat{a}_{\Omega }\left( \mathbf{u}\left( t\right) ,\mathbf{u%
}\left( t\right) \right) dt-2\int_{0}^{T}\widehat{a}_{\Omega }\left( \mathbf{%
u}\left( t\right) ,\mathbf{\Phi }\left( t\right) \right) dt+\int_{0}^{T}%
\widehat{a}_{\Omega }\left( \mathbf{\Phi }\left( t\right) ,\mathbf{\Phi }%
\left( t\right) \right) dt\text{.}
\end{equation*}
Hence, there is some $\varepsilon _{1}>0$ such that 
\begin{equation*}
\int_{0}^{T}a^{\varepsilon }\left( \mathbf{u}_{\varepsilon }\left( t\right) -%
\mathbf{\Phi }_{\varepsilon }\left( t\right) ,\mathbf{u}_{\varepsilon
}\left( t\right) -\mathbf{\Phi }_{\varepsilon }\left( t\right) \right)
dt\leq \int_{0}^{T}\widehat{a}_{\Omega }\left( \mathbf{u}\left( t\right) -%
\mathbf{\Phi }\left( t\right) ,\mathbf{u}\left( t\right) -\mathbf{\Phi }%
\left( t\right) \right) dt+\frac{3\alpha \eta ^{2}}{16}
\end{equation*}%
for all $0<\varepsilon \leq \varepsilon _{1}$. But in view of (\ref{eq1.1c})
and (\ref{eq3.15b}), the preceding inequality leads to 
\begin{equation*}
\alpha \left\Vert \mathbf{u}_{\varepsilon }-\mathbf{\Phi }_{\varepsilon
}\right\Vert _{L^{2}\left( 0,T;H_{0}^{1}\left( \Omega \right) ^{N}\right)
}^{2}\leq \frac{\alpha \eta ^{2}}{16}+\frac{3\alpha \eta ^{2}}{16}=\frac{%
\alpha \eta ^{2}}{4}\text{,}
\end{equation*}%
for all $0<\varepsilon \leq \varepsilon _{1}$. Hence (\ref{eq3.15c}) follows
provided $0<\varepsilon \leq \varepsilon _{1}$.

(ii) Now, let $1\leq j,k\leq N$ be fixed freely. Thanks to (i), we have 
\begin{equation*}
\left\vert \left\Vert \frac{\partial u_{\varepsilon }^{k}}{\partial x_{j}}%
\right\Vert _{L^{2}\left( Q\right) }-\left\Vert \frac{\partial \Phi
_{\varepsilon }^{k}}{\partial x_{j}}\right\Vert _{L^{2}\left( Q\right)
}\right\vert \leq \frac{\eta }{2}\qquad \left( 0<\varepsilon \leq
\varepsilon _{1}\right) \text{.}
\end{equation*}%
On the other hand, by (\ref{eq3.15b}) with (\ref{eq1.1c}), we obtain 
\begin{equation*}
\alpha \int_{0}^{T}\left\Vert \mathbf{u}\left( t\right) -\mathbf{\Phi }%
\left( t\right) \right\Vert _{\mathbb{E}_{0}^{1,\tau }}^{2}dt\leq \frac{%
\alpha \eta ^{2}}{16}\text{,}
\end{equation*}%
i.e., 
\begin{equation*}
\left\Vert \nabla \mathbf{u}_{0}+\nabla _{y}\mathbf{u}_{1}+\nabla _{z}%
\mathbf{u}_{2}-\nabla \mathbf{\psi }_{0}-\nabla _{y}\mathbf{\psi }%
_{1}-\nabla _{z}\mathbf{\psi }_{2}\right\Vert _{L^{2}\left(
Q;L_{per}^{2}\left( Y\times \mathcal{T}\times Z\right) \right) ^{N^{2}}}\leq 
\frac{\eta }{4}\text{.}
\end{equation*}%
This implies 
\begin{equation*}
\left\vert \left\Vert \frac{\partial u_{0}^{k}}{\partial x_{j}}+\frac{%
\partial u_{1}^{k}}{\partial y_{j}}+\frac{\partial u_{2}^{k}}{\partial z_{j}}%
\right\Vert _{L^{2}\left( Q;L_{per}^{2}\left( Y\times \mathcal{T}\times
Z\right) \right) }-\left\Vert \frac{\partial \psi _{0}^{k}}{\partial x_{j}}+%
\frac{\partial \psi _{1}^{k}}{\partial y_{j}}+\frac{\partial \psi _{2}^{k}}{%
\partial z_{j}}\right\Vert _{L^{2}\left( Q;L_{per}^{2}\left( Y\times 
\mathcal{T}\times Z\right) \right) }\right\vert \leq \frac{\eta }{4}\text{.}
\end{equation*}%
Moreover, 
\begin{equation*}
\frac{\partial \Phi _{\varepsilon }^{k}}{\partial x_{j}}\rightarrow \frac{%
\partial \psi _{0}^{k}}{\partial x_{j}}+\frac{\partial \psi _{1}^{k}}{%
\partial y_{j}}+\frac{\partial \psi _{2}^{k}}{\partial z_{j}}\text{
reiteratively in }L^{2}\left( Q\right) \text{-strong }\Sigma
\end{equation*}%
as $\varepsilon \rightarrow 0$. Thus 
\begin{equation*}
\left\Vert \frac{\partial \Phi _{\varepsilon }^{k}}{\partial x_{j}}%
\right\Vert _{L^{2}\left( Q\right) }\rightarrow \left\Vert \frac{\partial
\psi _{0}^{k}}{\partial x_{j}}+\frac{\partial \psi _{1}^{k}}{\partial y_{j}}+%
\frac{\partial \psi _{2}^{k}}{\partial z_{j}}\right\Vert _{L^{2}\left(
Q;L_{per}^{2}\left( Y\times \mathcal{T}\times Z\right) \right) }
\end{equation*}%
as $\varepsilon \rightarrow 0$, and there is some $\varepsilon _{2}>0$ such
that 
\begin{equation*}
\left\vert \left\Vert \frac{\partial \Phi _{\varepsilon }^{k}}{\partial x_{j}%
}\right\Vert _{L^{2}\left( Q\right) }-\left\Vert \frac{\partial \psi _{0}^{k}%
}{\partial x_{j}}+\frac{\partial \psi _{1}^{k}}{\partial y_{j}}+\frac{%
\partial \psi _{2}^{k}}{\partial z_{j}}\right\Vert _{L^{2}\left(
Q;L_{per}^{2}\left( Y\times \mathcal{T}\times Z\right) \right) }\right\vert
\leq \frac{\eta }{4}
\end{equation*}%
for all $0<\varepsilon \leq \varepsilon _{2}$. Therefore, 
\begin{equation*}
\left\vert \left\Vert \frac{\partial u_{\varepsilon }^{k}}{\partial x_{j}}%
\right\Vert _{L^{2}\left( Q\right) }-\left\Vert \frac{\partial u_{0}^{k}}{%
\partial x_{j}}+\frac{\partial u_{1}^{k}}{\partial y_{j}}+\frac{\partial
u_{2}^{k}}{\partial z_{j}}\right\Vert _{L^{2}\left( Q;L_{per}^{2}\left(
Y\times \mathcal{T}\times Z\right) \right) }\right\vert \leq \left\vert
\left\Vert \frac{\partial u_{\varepsilon }^{k}}{\partial x_{j}}\right\Vert
_{L^{2}\left( Q\right) }-\left\Vert \frac{\partial \Phi _{\varepsilon }^{k}}{%
\partial x_{j}}\right\Vert _{L^{2}\left( Q\right) }\right\vert
\end{equation*}%
\begin{equation*}
+\left\vert \left\Vert \frac{\partial \Phi _{\varepsilon }^{k}}{\partial
x_{j}}\right\Vert _{L^{2}\left( Q\right) }-\left\Vert \frac{\partial \psi
_{0}^{k}}{\partial x_{j}}+\frac{\partial \psi _{1}^{k}}{\partial y_{j}}+%
\frac{\partial \psi _{2}^{k}}{\partial z_{j}}\right\Vert _{L^{2}\left(
Q;L_{per}^{2}\left( Y\times \mathcal{T}\times Z\right) \right) }\right\vert
\end{equation*}%
\begin{equation*}
+\left\vert \left\Vert \frac{\partial u_{0}^{k}}{\partial x_{j}}+\frac{%
\partial u_{1}^{k}}{\partial y_{j}}+\frac{\partial u_{2}^{k}}{\partial z_{j}}%
\right\Vert _{L^{2}\left( Q;L_{per}^{2}\left( Y\times \mathcal{T}\times
Z\right) \right) }-\left\Vert \frac{\partial \psi _{0}^{k}}{\partial x_{j}}+%
\frac{\partial \psi _{1}^{k}}{\partial y_{j}}+\frac{\partial \psi _{2}^{k}}{%
\partial z_{j}}\right\Vert _{L^{2}\left( Q;L_{per}^{2}\left( Y\times 
\mathcal{T}\times Z\right) \right) }\right\vert \leq \eta
\end{equation*}%
for all $0<\varepsilon \leq \varepsilon _{0}=\min \left( \varepsilon
_{1},\varepsilon _{2}\right) $. Hence, (\ref{eq3.15a}) is proved and the
proposition follows.
\end{proof}

Now, let us state for our corrector result.

\begin{theorem}
\label{th3.3} Suppose that the hypotheses of Theorem \ref{th3.1} are
satisfied. Suppose also that the functions $\mathbf{\chi }_{ik}$ and $%
\mathbf{\eta }_{ik}$ in (\ref{eq3.9b}) belong to $\mathcal{C}_{per}\left( 
\mathcal{T};\mathcal{C}_{per}^{1}\left( Y\right) \mathfrak{/}\mathbb{C}%
\right) ^{N}$ and $\mathcal{C}_{per}\left( \mathcal{T};\mathcal{C}%
_{per}^{1}\left( Y;\mathcal{C}_{per}^{1}\left( Z\right) \mathfrak{/}\mathbb{C%
}\right) \right) ^{N}$, respectively. Then, under the assumptions stated at
the beginning of this subsection, as $\varepsilon \rightarrow 0$ 
\begin{equation*}
\left\Vert \nabla \mathbf{u}_{\varepsilon }-\nabla \mathbf{u}%
_{0}-\varepsilon \nabla \mathbf{u}_{1}^{\varepsilon }-\varepsilon ^{2}\nabla 
\mathbf{u}_{2}^{\varepsilon }\right\Vert _{L^{2}\left( Q\right)
^{N^{2}}}\rightarrow 0\text{.}
\end{equation*}
\end{theorem}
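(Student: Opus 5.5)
The plan is to combine Proposition \ref{pr3.3} with Theorem \ref{th2.3}, applied componentwise to $u_{\varepsilon }^{k}$ ($1\leq k\leq N$). Proposition \ref{pr3.3} already gives
\[
\frac{\partial u_{\varepsilon }^{k}}{\partial x_{j}}\rightarrow \frac{\partial u_{0}^{k}}{\partial x_{j}}+\frac{\partial u_{1}^{k}}{\partial y_{j}}+\frac{\partial u_{2}^{k}}{\partial z_{j}}
\]
reiteratively in $L^{2}\left( Q\right) $-strong $\Sigma $. This is exactly hypothesis (\ref{eq2.4}) of Theorem \ref{th2.3}, with $u_{0}^{k}\in L^{2}\left( 0,T;H^{1}\left( \Omega \right) \right) $ and $u_{1}^{k},u_{2}^{k}$ in the required spaces.

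The point is that, under the regularity assumptions of this subsection, we do not need to approximate $u_{1}^{k}$ and $u_{2}^{k}$ by $\psi _{1},\psi _{2}$. By (\ref{eq3.10a})-(\ref{eq3.10b}) we have $u_{1}^{k}=-\sum_{i,h}\frac{\partial u_{0}^{h}}{\partial x_{i}}\chi _{ih}^{k}$ and $u_{2}^{k}=-\sum_{i,h}\frac{\partial u_{0}^{h}}{\partial x_{i}}\eta _{ih}^{k}$. Since $\mathbf{u}_{0}\in L^{\infty }\left( 0,T;H^{2}\left( \Omega \right) ^{N}\right) $, each $\frac{\partial u_{0}^{h}}{\partial x_{i}}$ lies in $L^{2}\left( 0,T;H^{1}\left( \Omega \right) \right) $. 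Combined with the assumed continuity of $\chi _{ik},\eta _{ik}$, this shows that $u_{1}^{k}$ lies in $L^{2}\left( 0,T;H^{1}\left( \Omega \right) \right) \otimes \mathcal{C}_{per}\left( \mathcal{T};\mathcal{C}_{per}^{1}\left( Y\right) /\mathbb{C}\right) $, and $u_{2}^{k}$ in the analogous space for $\psi _{2}$. We may therefore choose $\psi _{1}=u_{1}^{k}$ and $\psi _{2}=u_{2}^{k}$ in Theorem \ref{th2.3}, so that the approximation errors $\left\Vert u_{1}-\psi _{1}\right\Vert $ and $\left\Vert u_{2}-\psi _{2}\right\Vert $ vanish.

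Given $\eta >0$, Theorem \ref{th2.3} then yields $\varepsilon _{0}$ such that
\[
\left\Vert \frac{\partial }{\partial x_{j}}\left( u_{\varepsilon }^{k}-u_{0}^{k}-\varepsilon \left( u_{1}^{k}\right) ^{\varepsilon }-\varepsilon ^{2}\left( u_{2}^{k}\right) ^{\varepsilon }\right) \right\Vert _{L^{2}\left( Q\right) }\leq \eta
\]
for $0<\varepsilon \leq \varepsilon _{0}$ and all $j,k$. Summing the squares over $j,k$ gives the convergence to $0$ of $\left\Vert \nabla \mathbf{u}_{\varepsilon }-\nabla \mathbf{u}_{0}-\varepsilon \nabla \mathbf{u}_{1}^{\varepsilon }-\varepsilon ^{2}\nabla \mathbf{u}_{2}^{\varepsilon }\right\Vert _{L^{2}\left( Q\right) ^{N^{2}}}$.

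The main obstacle is justifying that the traces $\left( u_{1}^{k}\right) ^{\varepsilon }$, $\left( u_{2}^{k}\right) ^{\varepsilon }$ and their $x$-derivatives are well defined and controlled, as the proof of Theorem \ref{th2.3} requires. In that proof the constant $c$ involves $\left\Vert \partial \psi _{1}/\partial x_{j}\right\Vert _{L^{2}\left( Q;\mathcal{B}\right) }$ and the analogous norms for $\psi _{2}$. Here the corresponding quantities are $\partial ^{2}u_{0}^{h}/\partial x_{j}\partial x_{i}\in L^{2}\left( Q\right) $ multiplied by the bounded continuous functions $\chi _{ih}^{k}$, and $\frac{\partial u_{0}^{h}}{\partial x_{i}}\,\partial \eta _{ih}^{k}/\partial y_{j}$. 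These are finite precisely because $\mathbf{u}_{0}\in L^{2}\left( 0,T;H^{2}\right) $ and because of the $\mathcal{C}^{1}$ hypotheses on the correctors. Once this is checked, the chain-rule identities in the proof of Theorem \ref{th2.3} hold verbatim, and the remaining terms are $O\left( \varepsilon \right) $.
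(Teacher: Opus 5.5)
Your proposal is correct and is essentially the paper's proof. The paper likewise uses (\ref{eq3.10a})--(\ref{eq3.10b}), the regularity $\mathbf{u}_{0}\in L^{\infty}(0,T;H^{2}(\Omega)^{N})$ and the $\mathcal{C}^{1}$ hypotheses on the correctors to see that each $u_{1}^{l}$, $u_{2}^{l}$ already lies in the tensor-product spaces of Theorem \ref{th2.3}, and then, on the strength of Proposition \ref{pr3.3}, applies Theorem \ref{th2.3} componentwise with $\psi_{1}=u_{1}^{l}$ and $\psi_{2}=u_{2}^{l}$. Your extra check that the constant $c$ in the proof of Theorem \ref{th2.3} is finite (via $\partial^{2}u_{0}^{h}/\partial x_{i}\partial x_{j}\in L^{2}(Q)$ and the boundedness of the correctors and their derivatives) makes explicit what the paper leaves implicit.
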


\begin{proof}
As the functions $\left( \mathbf{\chi }_{ik},\mathbf{\eta }_{ik}\right) $
lie in $\mathcal{C}_{per}\left( \mathcal{T};\mathcal{C}_{per}^{1}\left(
Y\right) \mathfrak{/}\mathbb{C}\right) ^{N}\times \mathcal{C}_{per}\left( 
\mathcal{T};\mathcal{C}_{per}^{1}\left( Y;\mathcal{C}_{per}^{1}\left(
Z\right) \mathfrak{/}\mathbb{C}\right) \right) ^{N}$, 
\begin{equation*}
\mathbf{u}_{1}=-\sum_{i,k=1}^{N}\frac{\partial u_{0}^{k}}{\partial x_{i}}%
\mathbf{\chi }_{ik}\text{ \ and }\mathbf{u}_{2}=-\sum_{i,k=1}^{N}\frac{%
\partial u_{0}^{k}}{\partial x_{i}}\mathbf{\eta }_{ik}
\end{equation*}%
belong to $L^{2}\left( 0,T;H^{1}\left( \Omega \right) ^{N}\right) \otimes 
\mathcal{C}_{per}\left( \mathcal{T};\mathcal{C}_{per}^{1}\left( Y\right) 
\mathfrak{/}\mathbb{C}\right) ^{N}$ and $L^{2}\left( 0,T;H^{1}\left( \Omega
\right) ^{N}\right) \otimes \mathcal{C}_{per}\left( \mathcal{T};\mathcal{C}%
_{per}^{1}\left( Y;\mathcal{C}_{per}^{1}\left( Z\right) \mathfrak{/}\mathbb{C%
}\right) \right) ^{N}$, respectively. Thus for all $1\leq l\leq N$, $%
u_{1}^{l}=-\sum_{i,k=1}^{N}\frac{\partial u_{0}^{k}}{\partial x_{i}}\mathcal{%
\chi }_{ik}^{l}$ belongs to $L^{2}\left( 0,T;H^{1}\left( \Omega \right)
\right) \otimes \mathcal{C}_{per}\left( \mathcal{T};\mathcal{C}%
_{per}^{1}\left( Y\right) \mathfrak{/}\mathbb{C}\right) $ and $%
u_{2}^{l}=-\sum_{i,k=1}^{N}\frac{\partial u_{0}^{k}}{\partial x_{i}}\mathcal{%
\eta }_{ik}^{l}$ lies in $L^{2}\left( 0,T;H^{1}\left( \Omega \right) \right)
\otimes \mathcal{C}_{per}\left( \mathcal{T};\mathcal{C}_{per}^{1}\left( Y;%
\mathcal{C}_{per}^{1}\left( Z\right) \mathfrak{/}\mathbb{C}\right) \right) $%
. Therefore in view of Proposition \ref{pr3.3}, we apply Theorem \ref{th2.3}%
. This leads to 
\begin{equation*}
\left\Vert \frac{\partial }{\partial x_{j}}\left( u_{\varepsilon
}^{l}-u_{0}^{l}-\varepsilon \left( u_{1}^{l}\right) ^{\varepsilon
}-\varepsilon ^{2}\left( u_{2}^{l}\right) ^{\varepsilon }\right) \right\Vert
_{L^{2}\left( Q\right) }\rightarrow 0\qquad \left( 1\leq l,j\leq N\right)
\end{equation*}%
as $\varepsilon \rightarrow 0$. The theorem is proved.
\end{proof}

\noindent \textbf{Conclusion}. As a concluding remark, the time dependent
viscosities of the Navier-Stokes type flows have not been taken in account
in this study, in view of the lack of unicity of solutions to (\ref{eq1.2a}%
)-(\ref{eq1.2d}) and their estimates. It is not an easy task to prove the
unicity and estimates in \cite[Proposition 2.1]{bib11}, when the
coefficients $a_{ij}$ depend on the time variable. However, this
homogenization process remains valid provided that the estimates in \cite[%
Proposition 2.1]{bib11} and the unicity are established.

\end{document}